\newtheorem{theorem}{Theorem}[section]
\newtheorem{lemma}[theorem]{Lemma}
\newtheorem{proposition}[theorem]{Proposition}
\newtheorem{corollary}[theorem]{Corollary}
\newtheorem{question}[theorem]{Question}
\theoremstyle{definition}}
\theoremstyle{definition}\newtheorem{example}[theorem]{Example}}
\theoremstyle{definition}}
\theoremstyle{definition}}
\newtheorem*{thmbp}{Theorem BP}
\newtheorem*{thmw}{Theorem W}
\newtheorem*{thmlm}{Proposition LM}
\numberwithin{equation}{section}
\def\C{{\mathbb C}}
\def\PP{{\mathbb P}}
\def\N{{\mathbb N}}
\def\Z{{\mathbb Z}}
\def\R{{\mathbb R}}
\def\Q{{\mathbb Q}}
\def\T{{\mathbb T}}
\def\K{{\mathbb K}}
\def\pp{{\mathcal P}}
\def\epsilon{\varepsilon}
\def\kappa{\varkappa}
\def\phi{\varphi}
\def\leq{\leqslant}
\def\geq{\geqslant}
\def\dim{\hbox{\tt dim}\,}
\def\ker{\hbox{\tt ker}\,}
\def\spann{\hbox{\tt span}\,}
\def\bin#1#2{\left({{#1}\atop {#2}}\right)}
\def\uu{{\mathcal U}}
\title{Universal elements for non-linear operators and their
applications}
\author{Stanislav Shkarin}
\date{}
\begin{document}

\maketitle

\begin{abstract} We prove that under certain topological conditions
on the set of universal elements of a continuous map $T$ acting on a
topological space $X$, that the direct sum $T\oplus M_g$ is
universal, where $M_g$ is multiplication by a generating element of
a compact topological group. We use this result to characterize
$\R_+$-supercyclic operators and to show that whenever $T$ is a
supercyclic operator and $z_1,\dots,z_n$ are pairwise different
non-zero complex numbers, then the operator $z_1T\oplus
{\dots}\oplus z_n T$ is cyclic. The latter answers affirmatively a
question of Bayart and Matheron.
\end{abstract}

\small \noindent{\bf MSC:} \ \ 47A16, 37A25

\noindent{\bf Keywords:} \ \ Cyclic operators, hypercyclic
operators, supercyclic operators, universal families \normalsize

\section{Introduction \label{s1}}\rm

All topological spaces in this article {\bf are assumed to be
Hausdorff} and all vector spaces are supposed to be over the field
$\K$ being either the field $\C$ of complex numbers or the field
$\R$ of real numbers. As usual, $\R_+$ is the set of non-negative
real numbers, $\Q$ is the field of rational numbers, $\Z$ is the set
of integers, $\Z_+$ is the set of non-negative integers, $\N$ is the
set of positive integers and $\T=\{z\in\C:|z|=1\}$. Symbol $L(X)$
stands for the space of continuous linear operators on a topological
vector space $X$ and $X^*$ is the space of continuous linear
functionals on $X$. For each $T\in L(X)$, the dual operator
$T^*:X^*\to X^*$ is defined as usual: $(T^*f)(x)=f(Tx)$ for $f\in
X^*$ and $x\in X$. It is worth noting that if $X$ is not locally
convex, then the elements of $X^*$ may not separate points of $X$. A
family ${\cal F}=\{F_a:a\in A\}$ of continuous maps from a
topological space $X$ to a topological space $Y$ is called {\it
universal} if there is $x\in X$ for which the orbit $O({\cal
F},x)=\{F_ax:a\in A\}$ is dense in $Y$. Such an $x$ is called a {\it
universal element} for $\cal F$. We use the symbol $\uu({\cal F})$
for the set of universal elements for $\cal F$. If $X$ is a
topological space and $T:X\to X$ is a continuous map, then we say
that $x\in X$ is {\it universal} for $T$ if $x$ is universal for the
family $\{T^n:n\in\Z_+\}$. That is, $x$ is universal for $T$ if the
orbit $O(T,x)=\{T^nx:n\in\Z_+\}$ is dense in $X$. We denote the set
of universal elements for $T$ by $\uu(T)$. That is,
$\uu(T)=\uu(\{T^n:n\in\Z_+\})$. In order to formulate the following
theorem, we need to recall few topological definitions. A
topological space $X$ is called {\it connected} if it has no subsets
different from $\varnothing$ and $X$, which are closed and open. A
topological space $X$ is called {\it path connected} if for each
$x,y\in X$, there is a continuous map $f:[0,1]\to X$ such that
$f(0)=x$ and $f(1)=y$. A topological space $X$ is called {\it simply
connected} if for any continuous function $f:\T\to X$, there exist a
continuous function $F:\T\times [0,1]\to X$ and $x_0\in X$ such that
$F(z,0)=f(z)$ and $F(z,1)=x_0$ for any $z\in\T$. Next, $X$ is called
{\it locally path connected at} $x\in X$  if for any neighborhood
$U$ of $x$, there exists a neighborhood $V$ of $x$ such that for any
$y\in V$, there is a continuous map $f:[0,1]\to X$ such that
$f(0)=x$, $f(1)=y$ and $f([0,1])\subseteq U$. A space $X$ is called
{\it locally path connected} if it is locally path connected at
every point. Equivalently, $X$ is locally path connected if there is
a base of topology of $X$ consisting of path connected sets.
Finally, we recall that a topological space $X$ is called {\it
Baire} if for any sequence $\{U_n\}_{n\in\Z_+}$ of dense open
subsets of $X$, the intersection of $U_n$ is dense in $X$. We say
that an element $g$ of a topological group $G$ is its {\it
generator} if $\{g^n:n\in\Z_+\}$ is dense in $G$. It is worth
mentioning that each topological group, which has a  generator, is
abelian. If $T:X\to X$ and $S:Y\to Y$ are two maps, we use the
symbol $T\oplus S$ to denote the map
$$
T\oplus S:X\times Y\to X\times Y,\qquad (T\oplus S)(x,y)=(Tx,Sy).
$$
The following theorem is a generalization of the Ansari theorem on
hypercyclicity of the powers of a hypercyclic operator in several
directions.

\begin{theorem} \label{general} Let $X$ be a topological space, $T:X\to X$ be a
continuous map and $g$ be a generator of a compact topological group
$G$. Assume also that there is a non-empty subset $Y$ of $\uu(T)$
such that $T(Y)\subseteq Y$ and $Y$ is path connected, locally path
connected and simply connected. Then the set
$\{(T^nx,g^n):n\in\Z_+\}$ is dense in $X\times G$ for any $x\in Y$.
\end{theorem}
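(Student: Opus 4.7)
Set $\phi = T\oplus M_g$ and $A = \overline{\{(T^n x, g^n) : n\in\Z_+\}} \subseteq X\times G$; the goal is $A = X\times G$. Since $T(Y)\subseteq Y\subseteq\uu(T)$, every orbit $\{T^n y\}$ with $y\in Y$ is dense in $X$, so $Y$ is dense in $X$ and in fact $\overline{T^n(Y)} = X$ for every $n\in\Z_+$. It therefore suffices to exhibit a \emph{single} $h_0\in G$ with $(y, h_0)\in A$ for every $y\in Y$. Indeed, given such $h_0$, the $\phi$-invariance of $A$ yields $T^n(Y)\times\{g^n h_0\}\subseteq A$ for every $n$, whose closure is $X\times\{g^n h_0\}\subseteq A$; a further closure using density of $\{g^n h_0\}$ in $G$ produces $X\times G\subseteq A$.

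\textbf{Reformulation.} For each $h\in G$ put $Y_h = \{y\in Y : (y, h)\in A\}$ (closed in $Y$) and $H(y) = \{h\in G : (y, h)\in A\}$ (closed in $G$). Compactness of $G$ makes the projection $\pi_X\colon X\times G\to X$ a closed map, so $\pi_X(A) = X$; hence $H(y)\neq\varnothing$ for every $y\in X$ and $\bigcup_{h\in G}Y_h = Y$. The $\phi$-invariance of $A$ gives $g\cdot H(y)\subseteq H(Ty)$, and $y\mapsto H(y)$ is upper semicontinuous in the Vietoris sense. The task reduces to producing $h_0\in G$ with $Y_{h_0} = Y$.

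\textbf{Main step and obstacle.} This is where path-, local path- and simple connectedness of $Y$ enter, in an Ansari-style connectedness argument adapted to the continuous parameter group $G$. My plan is to transport elements of $H(y)$ along paths in $Y$: given $h\in H(y)$ witnessed by a net $(T^{n_\alpha}x, g^{n_\alpha})\to (y, h)$, local path-connectedness should allow one to produce, for each $y'$ in a small path-connected neighborhood of $y$, a nearby net converging to some $(y', h')$ with $h'\in H(y')$ close to $h$; compactness of $G$ keeps the $G$-coordinate under control. This yields a local continuous selection of the multifunction $y\mapsto H(y)$; simple connectedness of $Y$ then removes the monodromy and patches these local selections into a global continuous $\tau\colon Y\to G$ with $\tau(y)\in H(y)$. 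Path-connectedness of $Y$ lets one fix $h_0 = \tau(y_0)$ at any $y_0\in Y$ and deduce, using continuity of $\tau$ and the transport property, that $h_0\in H(y)$ for every $y\in Y$. The substantial difficulty is constructing the local lift itself: $H(y)$ is defined purely through the fixed closed set $A$, so its dependence on $y$ must be extracted via carefully synchronized manipulation of the approximating integers $n_\alpha$, balancing the local path-connected structure of $Y$ against the compact topology and the generator property of $g$ in $G$; once this is done, simple connectedness of $Y$ removes the global obstruction by a standard covering-space lifting.
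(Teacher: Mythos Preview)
Your reduction and reformulation are correct and match the paper's setup exactly: your sets $H(y)$ are the paper's $F_{x,y}$, and the goal is indeed to show these exhaust $G$. The gap is in your ``Main step,'' which is more a hope than an argument.

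Two concrete problems. First, the ``local lift'' you describe amounts to lower semicontinuity of $y\mapsto H(y)$, but you only have upper semicontinuity from closedness of $A$; there is no mechanism in your sketch for producing, from a net $T^{n_\alpha}x\to y$ with $g^{n_\alpha}\to h$, another net hitting a nearby $y'$ with $G$-coordinate close to $h$. Second, even granting a continuous selection $\tau:Y\to G$ with $\tau(y)\in H(y)$, nothing in your outline forces $\tau$ to be constant; ``the transport property'' $g\cdot H(y)\subseteq H(Ty)$ does not relate $\tau(Ty)$ to $g\tau(y)$, and a covering-space argument would need $G\to G/H$ to be a covering, i.e.\ $H$ discrete, which you have not established (and which is generally false).

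The paper supplies the missing structure algebraically rather than by selection. Using the semigroup property of the orbit one proves $F_{x,y}F_{y,z}\subseteq F_{x,z}$, from which it follows that every $H(y)=F_{x,y}$ is a coset of one fixed closed subgroup $H=F_{x,x}$. This collapses the multifunction to a genuine continuous map $\psi:Y\to G/H$ satisfying the equivariance $\psi(Ty)=(gH)\psi(y)$. The simple connectedness of $Y$ is then used not for a lifting argument but for a winding-number obstruction: composing $\psi$ with a character $G/H\to\T$, one shows (Lemma~\ref{map} in the paper) that a continuous $r:Y\to\T$ with $r(Ty)=zr(y)$, $z\neq 1$, forces every $T$-orbit in $Y$ to be closed, contradicting density. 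Hence $gH=1_{G/H}$, so $H=G$, which is stronger than finding a single $h_0$. The two ideas you are missing are precisely the coset structure of $H(y)$ and the character/winding-number argument that exploits it.
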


In the case when $X$ is compact and metrizable, the above theorem
follows from Theorem~11.2 in the paper \cite{distal} by Furstenburg.
Our proof is based on the same general idea as in \cite{distal},
which is also reproduced in the proofs of main results in
\cite{semi,muller}. We would like to mention the following immediate
corollary of Theorem~\ref{general}.

\begin{corollary} \label{general0} Let $X$ be a topological space, $T:X\to X$ be a
continuous map and $g$ be a generator of a compact topological group
$G$. Assume also that there is a non-empty subset $Y$ of $\uu(T)$
such that $T(Y)\subseteq Y$ and $Y$ is path connected, locally path
connected and simply connected. Then $Y\times G\subseteq \uu(T\oplus
M_g)$, where $M_g:G\to G$, $M_gh=gh$.
\end{corollary}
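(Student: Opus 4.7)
The plan is to deduce Corollary~\ref{general0} from Theorem~\ref{general} by an essentially one-line observation, using only the fact that right multiplication in a topological group is a homeomorphism.

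First I would compute the iterates of $T\oplus M_g$. For any $(x,h)\in X\times G$ and $n\in\Z_+$, induction on $n$ together with the definition $M_g^n h=g^n h$ yields $(T\oplus M_g)^n(x,h)=(T^n x,\,g^n h)$. Thus to show that $(x,h)\in\uu(T\oplus M_g)$ for every $x\in Y$ and every $h\in G$, I must show that the set $A_{x,h}=\{(T^n x,g^n h):n\in\Z_+\}$ is dense in $X\times G$.

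Next I would fix an arbitrary $h\in G$ and consider the map $\phi_h:X\times G\to X\times G$ given by $\phi_h(u,v)=(u,vh)$. Since $G$ is a topological group, the right translation $v\mapsto vh$ is a homeomorphism of $G$ onto itself (with inverse $v\mapsto vh^{-1}$), and consequently $\phi_h$ is a homeomorphism of $X\times G$ onto itself. Observe that $A_{x,h}=\phi_h(A_{x,e})$, where $e$ is the identity of $G$ and $A_{x,e}=\{(T^n x,g^n):n\in\Z_+\}$. By Theorem~\ref{general} applied to $x\in Y$, the set $A_{x,e}$ is dense in $X\times G$. Since $\phi_h$ is a surjective homeomorphism, it maps dense subsets to dense subsets, and therefore $A_{x,h}$ is dense in $X\times G$, which is exactly the required universality of $(x,h)$.

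There is really no serious obstacle here: the whole content of the corollary is already contained in Theorem~\ref{general}, and the passage from density of $\{(T^n x,g^n)\}$ to density of $\{(T^n x,g^n h)\}$ is just the observation that translation in a topological group is a homeomorphism. The only point worth stating explicitly is why one needs $G$ to be a topological group (rather than merely a semigroup with a generator), and this is precisely so that $\phi_h$ is a homeomorphism.
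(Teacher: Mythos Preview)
Your argument is correct and is exactly the immediate deduction the paper has in mind: the paper does not even spell out a proof, merely declaring the corollary an immediate consequence of Theorem~\ref{general}, and the homeomorphism-by-right-translation step you give is precisely the one-line observation needed.
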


If $X$ is a topological vector space and $T\in L(X)$, then a
universal element for $T$ is called {\it a hypercyclic vector} for
$T$ and the operator $T$ is called {\it hypercyclic} if
$\uu(T)\neq\varnothing$. Moreover, $x\in \uu(\{sT^n:s\in\K,\
n\in\Z_+\})$ is called a {\it supercyclic vector for} $T$ and $T$ is
called {\it supercyclic} if it has supercyclic vectors. Similarly
$T$ is called $\R_+$-{\it supercyclic} if the family ${\cal
F}=\{sT^n:s\in\R_+,\ n\in\Z_+\}$ is universal and elements of
$\uu({\cal F})$ are called $\R_+$-{\it supercyclic vectors} for $T$.
We refer to surveys \cite{ge1,ge2,msa} for additional information on
hypercyclicity and supercyclicity. The question of characterizing of
$\R_+$-supercyclic operators on complex Banach spaces was raised in
\cite{bermud,muller}. Maria de la Rosa has recently demonstrated
that the answer conjectured in \cite{bermud,muller} is indeed true.
We obtain the same result in the more general setting of topological
vector spaces by means of applying Theorem~\ref{general}.

\begin{theorem} \label{rp} A continuous linear operator $T$ on a
complex infinite dimensional topological vector space $X$ is
$\R_+$-supercyclic if and only if $T$ is supercyclic and either the
point spectrum $\sigma_p(T^*)$ of the dual operator $T$ is empty or
$\sigma_p(T^*)=\{z\}$, where $z\in\C\setminus\{0\}$ and $z/|z|$ has
infinite order in the group $\T$.
\end{theorem}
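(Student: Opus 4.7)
Suppose $T$ is $\R_+$-supercyclic with witnessing vector $x$. That $T$ is supercyclic is immediate since $\R_+\subset\C$. If $T^*f = zf$ for some $f \in X^*\setminus\{0\}$, then $f$ is a continuous nonzero linear functional on a TVS, hence an open surjection onto $\C$; consequently $f(\{sT^nx : s\in\R_+,\, n\in\Z_+\}) = \{sz^n f(x)\}$ must be dense in $\C$. This rules out $f(x)=0$ and $z=0$, and forces $\{(z/|z|)^n\}_{n\geq 0}$ to be dense in $\T$, i.e., $z/|z|$ of infinite order. For uniqueness of $z$: two distinct eigenvalues $z_1\neq z_2$ yield linearly independent eigenfunctionals $f_1,f_2$, and the open surjection $(f_1,f_2):X\to\C^2$ produces an image $\{(sz_1^nc_1,sz_2^nc_2)\}$ whose coordinate ratios lie in the countable set $\{(z_2/z_1)^nc_2/c_1\}$ — obstructing density in $\C^2$.

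\textbf{Sufficiency ($\Leftarrow$).} The plan is to apply Corollary~\ref{general0} after descending to a suitable quotient. Let $\hat X = (X\setminus\{0\})/\C^*$ be the complex projective quotient and let $\bar X$ be the ray quotient, i.e.\ the quotient by $x\sim sx$ for $s>0$; the map $T$ induces continuous maps $\hat T$ on $\hat X$ and $\bar T$ on $\bar X$. Supercyclicity of $T$ is equivalent to hypercyclicity of $\hat T$, and $\R_+$-supercyclicity of $T$ to hypercyclicity of $\bar T$; the natural projection $\bar X\to\hat X$ is a principal $\T$-bundle. Choose a generator $g\in\T$: take $g=z/|z|$ if $\sigma_p(T^*)=\{z\}$, and any infinite-order element of $\T$ otherwise. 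The goal is to produce a $\hat T$-invariant $Y\subseteq\uu(\hat T)$ which is path connected, locally path connected, and simply connected, together with a trivialization of the $\T$-bundle over $Y$ that conjugates $\bar T$ with $\hat T\oplus M_g$; Corollary~\ref{general0} will then supply $Y\times\T\subseteq\uu(\hat T\oplus M_g)$, whose preimage under the trivialization is a hypercyclic vector for $\bar T$, i.e.\ an $\R_+$-supercyclic vector for $T$.

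When $\sigma_p(T^*)=\{z\}$ with eigenfunctional $f$, the trivialization is explicit: for a ray $\{sy:s>0\}$ with $f(y)\neq 0$, set $\phi(\{sy:s>0\}):=([y],f(y)/|f(y)|)$; then $\phi$ is a homeomorphism onto $(\hat X\cap\{f\neq 0\})\times\T$, and the identity $f(Ty)=zf(y)=|z|gf(y)$ yields $\phi\circ\bar T=(\hat T\oplus M_g)\circ\phi$. Take $Y:=\uu(\hat T)\cap\{f\neq 0\}$; it is $\hat T$-invariant (since $z\neq 0$ gives $f(Ty)\neq 0\Leftrightarrow f(y)\neq 0$), nonempty (every supercyclic vector $x$ has $f(x)\neq 0$), and its topological properties are inherited from the affine-type structure of the complement of $\ker f$ in $\hat X$ together with the $G_\delta$-density of supercyclic directions. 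In the no-eigenvalue case, no such $f$ is available; one argues instead, via a cohomological construction exploiting $\sigma_p(T^*)=\varnothing$, that the natural $\T$-cocycle of the bundle $\bar X\to\hat X$ is cohomologous to the constant $g$ over a suitably chosen simply connected $Y$, yielding the same kind of conjugating trivialization.

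\textbf{Main obstacle.} The technical core lies in two places: (i) verifying the three topological conditions — path connected, locally path connected, and simply connected — on the set $Y\subseteq\uu(\hat T)$ inside the infinite-dimensional quotient $\hat X$, where the topology of supercyclic orbits modulo $\C^*$ must be analysed carefully; and (ii) in the no-eigenvalue case, producing the equivariant trivialization in the absence of a concrete anchoring functional, relying on the cohomological triviality forced by $\sigma_p(T^*)=\varnothing$ to realise the relevant cocycle as cohomologous to the constant $g$.
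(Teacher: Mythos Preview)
Your necessity argument is essentially correct and matches the paper's (the paper outsources the ``at most one eigenvalue'' part to Theorem~W, but your direct argument via $(f_1,f_2)$ is fine).

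The sufficiency argument, however, has two genuine gaps.

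\textbf{Eigenvalue case.} Your choice $Y=\uu(\hat T)\cap\{f\neq 0\}$ is the problem. You assert that the three topological conditions are ``inherited from the affine-type structure \dots\ together with the $G_\delta$-density of supercyclic directions'', but a dense $G_\delta$ subset of an affine space need not be simply connected (and in the generality of the paper $X$ is not assumed Baire or metrizable, so $\uu(\hat T)$ need not even be $G_\delta$). The paper sidesteps this entirely: it does \emph{not} take all universal elements, but rather the explicit affine subspace
\[
Y=\{p(T)x:\,p\in\C[z],\ p(z)=1\}\subset Z=\{u:f(u)=1\},
\]
where $x$ is a fixed supercyclic vector. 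Theorem~W guarantees that every element of $Y$ is supercyclic (hence universal for $S=z^{-1}T\bigr|_Z$), and being an affine subspace $Y$ is trivially path connected, locally path connected, and simply connected by Lemma~\ref{conn1}. Theorem~\ref{general} then applies directly to $S$ on $Z$ with $G=\T$ and generator $z$, yielding density of $\{(z^{-n}T^nx,z^n)\}$ in $Z\times\T$; pushing forward by $(u,s)\mapsto su$ gives density of $\{T^nx\}$ in $\{|f|=1\}$, and hence $\R_+$-supercyclicity. No bundle trivialization or projective-space topology is needed.

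\textbf{No-eigenvalue case.} Here your argument is not a proof but a hope. You claim ``the natural $\T$-cocycle of the bundle $\bar X\to\hat X$ is cohomologous to the constant $g$'' for an \emph{arbitrarily chosen} infinite-order $g\in\T$. There is no reason to expect this: the lift $\bar T$ of $\hat T$ has no canonical constant rotation in the absence of an eigenfunctional, and the condition $\sigma_p(T^*)=\varnothing$ gives you no cohomological leverage of the kind you describe. The paper does not attempt anything like this; it simply invokes Proposition~LM (the Le\'on-Saavedra--M\"uller result), whose proof proceeds by an entirely different mechanism.
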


Recall also that a vector $x$ from a topological vector space $X$ is
called a {\it cyclic} vector for $T\in L(X)$ if
$\spann\{T^nx:n\in\Z_+\}$ is dense in $X$ and $T$ is called cyclic
if it has cyclic vectors. In \cite{gri} it is observed that if $T$
is a hypercyclic operator on a Banach space, then $T\oplus (-T)$ is
cyclic. It is shown in \cite{bm3} that if $T$ is a supercyclic
operator on a complex Banach space and $z_1,\dots,z_n$ are pairwise
different complex numbers such that $z_1^k={\dots}=z_n^k=1$ for some
$k\in\N$, then $z_1T\oplus{\dots}\oplus z_nT$ is cyclic. It is also
asked in \cite{bm3} whether the condition $z_1^k={\dots}=z_n^k=1$
can be removed. The next theorem provides an affirmative answer to
this question.

\begin{theorem}\label{zo} Let $n\in\N$ and $T$ be a supercyclic
continuous linear operator on an infinite dimensional topological
vector space $X$. Then for any pairwise different non-zero
$z_1,\dots,z_n\in\K$, the operator $z_1T\oplus{\dots}\oplus z_nT$ is
cyclic.
\end{theorem}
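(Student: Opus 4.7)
Take a supercyclic vector $x$ of $T$ and consider $v:=(x,\ldots,x)\in X^n$; I would attempt to show that $v$ is cyclic for $S:=z_1T\oplus\cdots\oplus z_nT$. The $S$-cyclic subspace generated by $v$ is precisely $\{(P(z_1T)x,\ldots,P(z_nT)x):P\in\K[w]\}$, so the whole problem reduces to proving that this family is dense in $X^n$. Since replacing every $z_j$ by $\alpha z_j$ replaces $S$ by $\alpha S$ and does not affect cyclicity, I first renormalise so that $\max_j|z_j|=1$, and set $J:=\{j:|z_j|=1\}$ and $J^c:=\{1,\ldots,n\}\setminus J$.

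The key step is an application of Corollary~\ref{general0} to the map $\widehat T$ that $T$ induces on the projective space $\PP X$. Since $T$ is supercyclic, $\widehat T$ is hypercyclic, and the infinite-dimensionality of $X$ ensures that $\uu(\widehat T)$ contains a nonempty $\widehat T$-invariant subset $Y$ that is path connected, locally path connected and simply connected; this I would verify directly. Applying the corollary with the compact abelian group $G:=\overline{\langle g\rangle}\subseteq\T^{|J|}$ whose generator is $g:=(z_j)_{j\in J}$, one obtains that $\{(\widehat T^k[x],g^k):k\in\Z_+\}$ is dense in $\PP X\times G$. Lifting the first factor back to $X$: for every $y\in X$ and every $\eta\in G$ there exist $k_m\to\infty$ and $\lambda_m\in\K\setminus\{0\}$ with $\lambda_mT^{k_m}x\to y$ and $(z_j^{k_m})_{j\in J}\to\eta$. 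For $j\in J$ this gives $\lambda_mz_j^{k_m}T^{k_m}x\to\eta_jy$, while for $j\in J^c$ the factor $|z_j|^{k_m}\to 0$ forces $\lambda_mz_j^{k_m}T^{k_m}x\to 0$. Hence the vector whose $j$-th coordinate is $\eta_jy$ on $J$ and $0$ on $J^c$ lies in the closure of the projective orbit of $v$, and thus in the closure of the cyclic span. Letting $y\in X$ and $\eta\in G$ vary and invoking the Vandermonde determinant---the $z_j$, $j\in J$, are distinct points of $\T$, so $(z_j^k)_{j\in J}$ for $k=0,\ldots,|J|-1$ is a basis of $\K^J$---I conclude that the entire subspace $X^J\times\{0\}^{J^c}$ lies in the closure of the cyclic span of $v$.

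When $J^c=\varnothing$ the proof is finished. In the general case I would proceed by induction on the number $r$ of distinct values $\rho_1>\rho_2>\cdots>\rho_r$ taken by $|z_j|$, writing $J_i:=\{j:|z_j|=\rho_i\}$ (so $J=J_1$). Assuming inductively that $X^{J_1\cup\cdots\cup J_{i-1}}\times\{0\}$ already lies in the closure of the cyclic span, I would construct polynomials that combine the joint-density argument of the previous paragraph applied inside $J_i$ (with $G$ now the compact subgroup of $\T^{|J_i|}$ generated by $(z_j/\rho_i)_{j\in J_i}$, and with monomials of sufficiently high degree $k_m$ so that the decay $(\rho_\ell/\rho_i)^{k_m}\to 0$ for $\ell>i$ sends the coordinates in $J_\ell$ to zero) with a correction on the already-handled layers coming from vectors supplied by the inductive hypothesis. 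The main obstacle is to arrange this bookkeeping so that all the unavoidable spillover terms in $J_1\cup\cdots\cup J_{i-1}$ are simultaneously cancellable and the resulting error vanishes in the (possibly non-locally-convex) topology of $X^n$; once this is carried out, the closure of the cyclic span coincides with $X^n$ and $v$ is a cyclic vector for $S$.
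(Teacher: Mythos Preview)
Your strategy for $\K=\C$ is essentially the paper's: it too takes $v=(u,\ldots,u)$ with $u$ supercyclic, normalises so the top layer has modulus one, applies the density result (packaged as Corollary~\ref{gege1}) with $G=\overline{\{(z_j)_{j\in J}^m:m\in\Z_+\}}\subset\T^{|J|}$, and uses the Vandermonde determinant to get $X^{J}\times\{0\}\subseteq C(S,v)$. Two differences are worth noting. First, your ``I would verify directly'' for the existence of $Y\subseteq\uu(\widehat T)$ hides real work: the paper has to split into the cases $\sigma_p(T^*)=\varnothing$ (where $Y=\PP X_0$, $X_0=\{p(T)u:p\in\pp\}$, works via Lemma~\ref{conn3}) and $\sigma_p(T^*)=\{w\}$ (where the paper abandons $\PP X$ entirely and works on the affine hyperplane $\{f=1\}$ for $f\in\ker(T^*-w)$). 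Second, your layer-by-layer induction with spillover correction is unnecessarily laborious; the paper observes that $L:=X^{J}\times\{0\}$ is $S$-invariant, that the quotient operator on $X^n/L\simeq X^{n-|J|}$ is $\bigoplus_{j\notin J}z_jT$, and invokes the elementary Lemma~\ref{elem} (if $L\subseteq C(S,v)$ is closed $S$-invariant and $v+L$ is cyclic for the quotient, then $v$ is cyclic) to close the induction on $n$. This sidesteps all bookkeeping and works in arbitrary, possibly non-metrizable, topological vector spaces.

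There is, however, a genuine gap when $\K=\R$. Real projective space $\PP X$ is \emph{not} simply connected (its fundamental group is $\Z/2\Z$; cf.\ the remark after Lemma~\ref{conn4}), so Corollary~\ref{general0} cannot be applied to $\widehat T$ on $\PP X$, and your claimed $Y$ need not exist. Passing to $\PP_+X$ does not repair this in general: Corollary~\ref{gege2} requires $p(T)$ to have dense range for \emph{every} nonzero polynomial, and Theorem~W does not supply this for arbitrary real supercyclic $T$. The paper therefore treats $\K=\R$ by a separate, elementary argument: for distinct positive $t_j$ one gets cyclicity of $t_1T\oplus\cdots\oplus t_kT$ from supercyclicity alone via the same quotient induction (Lemma~\ref{re1}), and the mixed-sign case follows from the observation (Lemma~\ref{re2}) that cyclicity of $R^2$ together with dense range of $R$ yields cyclicity of $R\oplus(-R)$, combined with Ansari's theorem that $T^2$ is supercyclic.
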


Theorem~\ref{general} is proved in Section~\ref{proof}. In
Section~\ref{connect} we formulate few connectedness related lammas,
needed for application of Theorem~\ref{general}. The proof of these
lemmas is postponed until the last section. Section~\ref{applic1} is
devoted to some straightforward applications of
Theorem~\ref{general}. In particular, it is shown that Ansari's
theorems \cite{ansa} on hypercyclicity of powers of a hypercyclic
operator and supercyclicity of powers of supercyclic operators, the
Le\'on-Saavedra and M\"uller theorem \cite{muller} on hypercyclicity
of rotations of hypercyclic operators and the M\"uller and Peris
\cite{semi} theorem on hypercyclicity of each operator $T_t$ with
$t>0$ in a strongly continuous universal semigroup $\{T_t\}_{t\geq
0}$ of continuous linear operators acting on a complete metrizable
topological vector space $X$, all follow from Theorem~\ref{general}.
In Section~\ref{rsup}, Theorem~\ref{rp}, characterizing
$\R_+$-supercyclic operators, is proven. Theorems~\ref{zo} is proved
in Section~\ref{dirsum}. In Section~\ref{remarks} we discuss the
structure of supercyclic operators $T$ with non-empty
$\sigma_p(T^*)$, prove cyclicity of finite direct sums of operators
satisfying the Supercyclicity Criterion with the same sequence
$\{n_k\}$ and raise few questions.

\section{Proof of Theorem~\ref{general} \label{proof}}

\begin{lemma}\label{ll1} Let $X$ be a topological space with no isolated points
and $T:X\to X$ be a continuous map. Then $T(\uu(T))\subseteq
\uu(T)$. Moreover, each $x\in\uu(T)$ belongs to
$\overline{O(T,Tx)}\setminus O(T,Tx)$.
\end{lemma}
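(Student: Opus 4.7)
The plan is to prove both assertions in a single line of reasoning based on the elementary identity
\[
O(T,x)=\{x\}\cup O(T,Tx),
\]
exploiting the Hausdorff hypothesis (singletons are closed) and the no-isolated-points hypothesis (complements of singletons are dense).

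First I would take $x\in\uu(T)$ and show $Tx\in\uu(T)$. Since $X$ is Hausdorff, $\{x\}$ is closed, so the displayed identity yields
\[
X=\overline{O(T,x)}\subseteq \{x\}\cup\overline{O(T,Tx)}.
\]
Hence $X\setminus\{x\}\subseteq\overline{O(T,Tx)}$. Because $X$ has no isolated points, $X\setminus\{x\}$ is dense in $X$, so $\overline{O(T,Tx)}=X$, i.e.\ $Tx\in\uu(T)$. This proves the inclusion $T(\uu(T))\subseteq\uu(T)$ and simultaneously gives $x\in X=\overline{O(T,Tx)}$, which is the first half of the second assertion.

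It remains to show $x\notin O(T,Tx)$. I would argue by contradiction: if $x=T^k x$ for some $k\geq 1$, then $O(T,x)=\{x,Tx,\dots,T^{k-1}x\}$ is a finite set, and being finite in a Hausdorff space it is closed; therefore $X=\overline{O(T,x)}=O(T,x)$ is itself finite. But every point of a non-empty finite Hausdorff space is isolated, contradicting the hypothesis (note that the case $X=\varnothing$ is vacuous since then $\uu(T)=\varnothing$).

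I do not anticipate any real obstacle: the only things to take care of are to invoke Hausdorffness for the closedness of $\{x\}$ (and of finite sets), and to remember that ``no isolated points'' is used twice — once to conclude that $X\setminus\{x\}$ is dense, and once to rule out periodicity of $x$ under $T$.
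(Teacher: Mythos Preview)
Your argument is correct and follows essentially the same route as the paper: both use the identity $O(T,x)=\{x\}\cup O(T,Tx)$ together with density of $X\setminus\{x\}$ to get $Tx\in\uu(T)$, and both rule out $x\in O(T,Tx)$ by observing that periodicity would force $X$ to be finite. Your version is in fact slightly cleaner, since you write $X\setminus\{x\}\subseteq\overline{O(T,Tx)}$ directly rather than asserting $O(T,Tx)=O(T,x)\setminus\{x\}$ (which, strictly speaking, presupposes aperiodicity).
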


\begin{proof}Let $x\in \uu(T)$. Then the orbit $O(T,x)=\{T^kx:k\in\Z_+\}$
is dense in $X$. Since $X$ has no isolated points,
$O(T,Tx)=O(T,x)\setminus \{x\}$ is also dense in $X$. Hence $Tx\in
\uu(T)$. Thus $T(\uu(T))\subseteq \uu(T)$. Since $Tx\in\uu(T)$, we
have $x\in\overline{O(T,Tx)}$. It remains to show that $x\notin
O(T,Tx)$. Assume that $x\in O(T,Tx)$. Then $x=T^nx$ for some
$n\in\N$ and $O(T,x)$ is finite: $O(T,x)=\{T^kx:0\leq k<n\}$. Since
any finite set is closed in $X$, $O(T,x)=\overline{O(T,x)}=X$. Hence
$X$ is finite and therefore does have isolated points. This
contradiction completes the proof.
\end{proof}

The following two lemmas are well-known and could be found in any
textbook treating topological groups, see, for instance, \cite{HR}.

\begin{lemma} \label{sub} A closed subsemigroup of a compact
topological group is a subgroup.
\end{lemma}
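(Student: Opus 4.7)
The plan is to show that a closed subsemigroup $S$ of a compact topological group $G$ contains the identity and is closed under inversion; together with $S\cdot S\subseteq S$ this gives the subgroup property. Fix any $s\in S$. Since $G$ is compact, the sequence $\{s^n\}_{n\in\N}$ has a cluster point, so one can extract $n_k\to\infty$ with $s^{n_k}\to g$ for some $g\in G$. Passing to a subsequence I may also arrange that $m_k:=n_{k+1}-n_k\to\infty$ (in particular $m_k\geq 2$ for all large $k$) and, using compactness once more, that $s^{m_k}\to h$ and $s^{m_k-1}\to t$ for some $h,t\in G$.

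The first key step is to identify $h=e$. Taking limits in the identity $s^{m_k}\cdot s^{n_k}=s^{n_{k+1}}$ gives $h\cdot g=g$, and cancelling in the group $G$ yields $h=e$. Since each $s^{m_k}$ lies in $S$ and $S$ is closed, this gives $e\in S$.

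The second key step produces the inverse of $s$. Taking limits in $s^{m_k-1}\cdot s=s^{m_k}$ yields $t\cdot s=e$, so $t=s^{-1}$. Because $m_k-1\geq 1$ for large $k$, each $s^{m_k-1}$ belongs to $S$, and closedness of $S$ forces $s^{-1}\in S$. Combined with $S\cdot S\subseteq S$ and $e\in S$, this shows $S$ is a subgroup.

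There is no real obstacle here: the argument is just the classical subsequence-difference trick, and it relies only on the compactness of $G$ (to extract convergent subsequences and to cancel in limits) together with the closedness of $S$ (to keep the extracted limits inside $S$). The only care needed is in arranging the index shifts so that the powers whose limits realize $e$ and $s^{-1}$ have exponents $\geq 1$, which is achieved by thinning the sequence $\{n_k\}$ so that the gaps $m_k$ tend to infinity.
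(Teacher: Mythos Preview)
The paper does not supply its own proof of this lemma; it simply records it as well known and refers to Hewitt--Ross, so there is no argument to compare against.

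Your idea is the classical one, but the execution has a gap. From compactness of $G$ you correctly deduce that the sequence $\{s^n\}$ has a cluster point $g$, and then you pass to a convergent \emph{subsequence} $s^{n_k}\to g$ (and later extract further convergent subsequences $s^{m_k}\to h$ and $s^{m_k-1}\to t$). The step ``cluster point $\Rightarrow$ convergent subsequence'' requires $G$ to be first countable at $g$; compactness alone does not suffice. For uncountable $I$ the compact abelian group $\{0,1\}^I$ (or $\T^I$) is not sequentially compact, so your subsequence manipulations are unavailable there. Since the lemma is stated---and is applied in Lemma~\ref{L2}---for an arbitrary compact topological group, this is a genuine gap in the stated generality.

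The repair is painless and keeps your index-difference idea intact; one just works directly with the cluster point. Given a neighbourhood $V$ of $1_G$, continuity of $(x,y)\mapsto xy^{-1}$ at $(g,g)$ gives a neighbourhood $U$ of $g$ with $UU^{-1}\subseteq V$. Since $g$ is a cluster point of $\{s^n\}$, pick $n_1$ with $s^{n_1}\in U$ and then $n_2>n_1+1$ with $s^{n_2}\in U$; then $s^{n_2-n_1}=s^{n_2}(s^{n_1})^{-1}\in V$ with $n_2-n_1\geq 2$, so $V$ meets $S$ and hence $1_G\in\overline S=S$. Applying this with $V=sW$ for an arbitrary neighbourhood $W$ of $s^{-1}$ yields $s^{n_2-n_1-1}\in W\cap S$ with $n_2-n_1-1\geq 1$, so $s^{-1}\in\overline S=S$ as well.
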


\begin{lemma} \label{char} Let $G$ be a compact abelian topological
group, $g\in G$ and $g\neq 1_G$. Then there exists a continuous
homomorphism $\phi:G\to\T$ such that $\phi(g)\neq 1$.
\end{lemma}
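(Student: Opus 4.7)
The plan is to follow the standard textbook route via the Peter--Weyl theorem, exploiting that for an abelian group its finite-dimensional unitary representations split into characters.

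First, since $G$ is compact Hausdorff, hence normal, and $g\neq 1_G$, Urysohn's lemma provides a continuous function $f\colon G\to\R$ with $f(1_G)\neq f(g)$. The Peter--Weyl theorem then asserts that the linear span of matrix coefficients of continuous finite-dimensional unitary representations of $G$ is uniformly dense in $C(G)$. Consequently, some continuous finite-dimensional unitary representation $\rho\colon G\to U(n)$ must distinguish $g$ from $1_G$; that is, $\rho(g)\neq I$.

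Now the abelian hypothesis takes over. The family $\{\rho(h):h\in G\}\subseteq U(n)$ consists of pairwise commuting unitary matrices, so by the spectral theorem they are simultaneously diagonalisable in a single orthonormal basis of $\C^n$. In that basis $\rho$ becomes a direct sum of one-dimensional representations $\chi_1,\ldots,\chi_n\colon G\to\T$, each a continuous homomorphism. From $\rho(g)\neq I$ it follows that $\chi_j(g)\neq 1$ for at least one $j$, and setting $\phi=\chi_j$ yields the required character.

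The only substantive ingredient is the Peter--Weyl theorem, whose proof requires the spectral analysis of convolution operators on $L^2(G,\mu)$ with Haar measure $\mu$; this is the sole potential obstacle. However, since Lemma~\ref{char} is invoked as a classical result (see \cite{HR}), Peter--Weyl may legitimately be used as a black box, and the remainder of the argument is routine linear algebra specific to the commutative case.
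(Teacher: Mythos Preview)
Your argument is correct: Peter--Weyl supplies a faithful-enough finite-dimensional unitary representation, and commutativity reduces it to characters. The paper itself does not prove Lemma~\ref{char} at all; it simply cites it as a standard fact from \cite{HR}, so your proposal actually supplies more than the paper does. What you have written is essentially the textbook proof one would find behind that citation (an alternative phrasing would invoke Pontryagin duality directly, but for compact $G$ that rests on the same Peter--Weyl input).
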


We would also like to remind the following topological fact, see for
instance, \cite{angel}.

\begin{lemma} \label{clos} Let $X$ be a topological space, $Y$ be a
compact topological space and $\pi:X\times Y\to X$ be the
projection: $\pi(x,y)=x$. Then the map $\pi$ is closed.
\end{lemma}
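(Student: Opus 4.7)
The plan is to verify closedness of $\pi$ directly: take an arbitrary closed set $C\subseteq X\times Y$ and show that its image $\pi(C)$ is closed in $X$, which I would do by checking that the complement $X\setminus\pi(C)$ is open. The main tool will be the standard tube lemma argument, whose only substantive input is the finite-subcover characterisation of compactness of $Y$.

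Concretely, I would fix $x\in X\setminus\pi(C)$. Since the fibre $\pi^{-1}(x)=\{x\}\times Y$ misses $\pi(C)$ and therefore misses $C$, every point $(x,y)$ lies in the open set $(X\times Y)\setminus C$, so I can choose basic open rectangles $U_y\times V_y$ with $(x,y)\in U_y\times V_y\subseteq (X\times Y)\setminus C$. The family $\{V_y\}_{y\in Y}$ covers the compact space $Y$, and at this point compactness enters: I would extract a finite subcover $V_{y_1},\dots,V_{y_n}$ and set $U=\bigcap_{i=1}^n U_{y_i}$, which is an open neighborhood of $x$.

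It remains to observe that the tube $U\times Y$ is disjoint from $C$. Any point of $U\times Y$ lies in some $U\times V_{y_i}\subseteq U_{y_i}\times V_{y_i}$, and the latter was chosen disjoint from $C$. Consequently $U\cap\pi(C)=\varnothing$, so $X\setminus\pi(C)$ is open and $\pi(C)$ is closed. There is no genuine obstacle in this argument; the lemma is elementary point-set topology and appears here only as a technical ingredient to be cited later alongside Lemmas~\ref{sub} and~\ref{char} in the proof of Theorem~\ref{general}.
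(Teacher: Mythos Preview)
Your argument is the standard tube-lemma proof and is correct. Note that the paper does not actually give a proof of this lemma: it merely states the result and refers the reader to Engelking's \emph{General Topology}. So there is nothing to compare against beyond observing that what you wrote is precisely the textbook argument one would find there.
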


We also need the following lemma, which borrows heavily from the
constructions in \cite{muller,semi}.

\begin{lemma} \label{L2} Let $X$ be a topological space, $\Lambda$
be a subsemigroup of the semigroup $C(X)$ of continuous maps from
$X$ to $X$ and $\uu=\uu(\Lambda)$ be the set of universal elements
for the family $\Lambda$. Let also $G$ be a compact abelian
topological group and $\phi:\Lambda\to G$ be a homomorphism:
$\phi(TS)=\phi(T)\phi(S)$ for any $T,S\in\Lambda$. For each $x,y\in
X$ we denote
\begin{equation}\label{nx0}
N_x=\overline{\{(Tx,\phi(T)):T\in\Lambda\}}\ \ \text{and}\ \
F_{x,y}=\{h\in G:(y,h)\in N_x\}.
\end{equation}
Then
\begin{itemize}
\item[\rm (a1)]$F_{x,y}$ is closed in $G$ for any $x,y\in X$ and
$F_{x,y}\neq \varnothing$ if  $x\in\uu$,
\item[\rm (a2)]$F_{x,y}F_{y,u}\subseteq F_{x,u}$ for any $x,y,u\in X$,
\item[\rm (a3)]$F_{x,x}=H$ is a closed subgroup of $G$ for any
$x\in\uu$, which does not depend on the choice of $x\in\uu$ and for
any $x,y\in\uu$, $F_{x,y}$ is a coset of $H$,
\item[\rm (a4)]the function $f:\uu\times\uu\to G/H$, $f(x,y)=F_{x,y}$
is separately continuous and satisfies $f(x,y)=f(y,x)^{-1}$,
$f(x,y)f(y,u)=f(x,u)$, and $f(x,x)=1_{G/H}$ for any $x,y,u\in\uu$.
Moreover, if $T\in\Lambda$ and $x,y,Ty\in \uu$, then
$f(x,Ty)=\phi(T)f(x,y)$.
\end{itemize}
\end{lemma}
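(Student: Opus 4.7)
The plan is to verify (a1)--(a4) in order, the main ingredients being closedness of $N_x$ by definition, compactness of $G$ (so nets in $G$ have convergent subnets), commutativity of $G$, continuity of each $T\in\Lambda$, and Lemma~\ref{sub} turning closed subsemigroups into subgroups. For (a1), observe that $F_{x,y}$ is the image of the closed set $N_x\cap(\{y\}\times G)$ under the projection onto $G$, which is a homeomorphism on this fibre, so $F_{x,y}$ is closed in $G$. If moreover $x\in\uu$, choose a net $T_\alpha\in\Lambda$ with $T_\alpha x\to y$, pass to a subnet along which $\phi(T_\alpha)$ converges in the compact group $G$ to some $h$, and observe $(y,h)\in N_x$, so $F_{x,y}\neq\varnothing$.

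For (a2) I would use a double-net construction. Given $h_1\in F_{x,y}$ and $h_2\in F_{y,u}$, choose nets $T_\alpha,S_\beta\in\Lambda$ with $(T_\alpha x,\phi(T_\alpha))\to(y,h_1)$ and $(S_\beta y,\phi(S_\beta))\to(u,h_2)$. For each fixed $\beta$, continuity of $S_\beta$ gives $(S_\beta T_\alpha x,\phi(S_\beta T_\alpha))\to(S_\beta y,\phi(S_\beta)h_1)$ as $\alpha$ varies, so $(S_\beta y,\phi(S_\beta)h_1)\in N_x$. Letting $\beta$ now vary, the right-hand side tends to $(u,h_2h_1)=(u,h_1h_2)$ by commutativity, and closedness of $N_x$ places $(u,h_1h_2)$ in $N_x$.

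For (a3), $F_{x,x}$ is closed by (a1) and a subsemigroup of $G$ by (a2), hence a closed subgroup $H_x$ of $G$ by Lemma~\ref{sub}. To establish independence of $x\in\uu$, fix $x,y\in\uu$ and pick $h\in F_{x,y}$, $k\in F_{y,x}$ (nonempty by (a1)); for any $\alpha\in H_x$, two applications of (a2) give $k\alpha h\in F_{y,x}F_{x,x}F_{x,y}\subseteq H_y$, and since $G$ is abelian $k\alpha h=\alpha(kh)$ with $kh\in H_y$, forcing $\alpha\in H_y$. Thus $H_x\subseteq H_y$, and symmetry yields a common subgroup $H$. For the coset claim, fix $h_0\in F_{x,y}$: by (a2), $h_0H=h_0F_{y,y}\subseteq F_{x,y}$; conversely, for any $h\in F_{x,y}$ and $k\in F_{y,x}$, both $hk$ and $h_0k$lie in $H$, so $hh_0^{-1}=(hk)(h_0k)^{-1}\in H$, and hence $F_{x,y}\subseteq h_0H$.

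For (a4), the identities $f(x,x)=1_{G/H}$, $f(x,y)f(y,u)=f(x,u)$, and $f(x,y)=f(y,x)^{-1}$ descend directly from (a2) and (a3) by quotienting by $H$, while $f(x,Ty)=\phi(T)f(x,y)$ follows by applying $T$ to any net $(S_\alpha x,\phi(S_\alpha))\to(y,h)$ to obtain $(TS_\alpha x,\phi(T)\phi(S_\alpha))\to(Ty,\phi(T)h)$, so $\phi(T)F_{x,y}\subseteq F_{x,Ty}$, and the two cosets of $H$ must coincide. The main obstacle is the separate continuity of $f$. Fix $x\in\uu$ and a net $y_\lambda\to y_0$ in $\uu$; by compactness of $G/H$ it suffices to show every convergent subnet $f(x,y_{\lambda_\mu})\to\xi$ satisfies $\xi=f(x,y_0)$. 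Choose $h_{\lambda_\mu}\in F_{x,y_{\lambda_\mu}}$; compactness of $G$ lets us pass to a further subnet with $h_{\lambda_\mu}\to h$ in $G$, necessarily with $hH=\xi$. Since $(y_{\lambda_\mu},h_{\lambda_\mu})\in N_x$ converges to $(y_0,h)$ and $N_x$ is closed, $h\in F_{x,y_0}$, so $\xi=f(x,y_0)$. Continuity in the first argument then follows from $f(x,y)=f(y,x)^{-1}$ and continuity of inversion in $G/H$.
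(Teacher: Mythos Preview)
Your proof is correct and follows the same overall structure as the paper's: the same decomposition into (a1)--(a4), the same use of Lemma~\ref{sub} to upgrade the closed subsemigroup $F_{x,x}$ to a subgroup, and the same coset reasoning for (a3). The differences are purely in technique. For non-emptiness in (a1) you pass to a convergent subnet of $\phi(T_\alpha)$ in the compact group $G$, whereas the paper argues by contradiction via a finite-subcover argument; for (a2) you run a two-stage net limit while the paper uses an explicit $W\cdot W\subseteq V$ neighborhood argument --- these are interchangeable. The most visible difference is the separate continuity in (a4): the paper identifies $\theta^{-1}(A)$ with $\pi_1(N_x\cap(\uu\times A_0))$ and invokes Lemma~\ref{clos} (the projection $X\times G\to X$ is closed), whereas you argue directly that every cluster point of $f(x,y_\lambda)$ in the compact space $G/H$ must equal $f(x,y_0)$. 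Both arguments ultimately rest on compactness of $G$; yours is a bit more self-contained, the paper's makes the role of the closed-map property explicit.
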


\begin{proof} (a1) Closeness of $F_{x,y}$ follows from the closeness of $N_x$ and
the obvious equality  $\{y\}\times F_{x,y}=N_x\cap (\{y\}\times G)$.
Assume now that $x\in\uu$, $y\in X$ and $F_{x,y}=\varnothing$. Then
for any $h\in G$, $(y,h)$ does not belong to the closed set
$N_x\subset X\times G$. Hence we can pick open neighborhoods $U_h$
and $V_h$ of $y$ and $h$ in $X$ and $G$ respectively such that
$(U_h\times V_h)\cap N_x=\varnothing$. Since $G$ is compact, the
open cover $\{V_h:h\in G\}$ has a finite subcover
$\{V_{h_1},\dots,V_{h_n}\}$. Then
$$
(G\times U)\cap N_x\subseteq \bigcup_{j=1}^n(U_{h_j}\times
V_{h_j})\cap N_x=\varnothing,\ \ \text{where}\ \ U=\bigcap_{j=1}^n
U_{h_j}.
$$
From the definition of $N_x$ it follows that the set
$\{Tx:T\in\Lambda\}$ does not intersect $U$, which is open in $X$
and non-empty since $y\in U$. This contradicts universality of $x$
for $\Lambda$.

(a2) Let $a\in F_{x,y}$ and $b\in F_{y,u}$. We have to demonstrate
that $ab\in F_{x,u}$. Let $U$ be any neighborhood of $u$ in $X$ and
$V$ be any neighborhood of $1_G$ in $G$. Pick a neighborhood $W$ of
$1_G$ in $G$ such that $W\cdot W\subseteq V$. Since $b\in F_{y,u}$,
there exists $T\in\Lambda$ such that $Ty\in U$ and $b^{-1}\phi(T)\in
W$. Since $T$ is continuous, $T^{-1}(U)$ is a neighborhood of $y$.
Since $a\in F_{x,y}$, there exists $S\in \Lambda$ such that $Sx\in
T^{-1}(U)$ and $a^{-1}\phi(S)\in W$. Then $TSx\in T(T^{-1}(U))=U$
and $(ab)^{-1}\phi(TS)=a^{-1}\phi(S)b^{-1}\phi(T)\in W\cdot
W\subseteq V$ (here we use commutativity of $G$). That is,
$(TSx,\phi(TS))\in U\times abV$. Therefore the set
$\{(Rx,\phi(R)):R\in\Lambda\}$ intersects any neighborhood of
$(u,ab)$ in $X\times G$. Thus, $(u,ab)\in N_x$ and $ab\in F_{x,u}$.

(a3) Let $x\in\uu$. By (a1) and (a2) $F_{x,x}$ is closed nonempty
and satisfies $F_{x,x}F_{x,x}\subseteq F_{x,x}$. Thus, $F_{x,x}$ is
a closed subsemigroup of the compact topological group $G$. By
Lemma~\ref{sub}, $F_{x,x}$ is a closed subgroup of $G$. Let now
$x,y\in \uu$. According to (a2) $F_{x,y}F_{y,y}F_{y,x}\subseteq
F_{x,x}$. By (a1), we can pick $a\in F_{x,y}$ and $b\in F_{y,x}$. It
follows that $aF_{y,y}b=abF_{y,y}\subseteq F_{x,x}$. Since $F_{x,x}$
and $F_{y,y}$ are subgroups of $G$ and $abF_{y,y}\subseteq F_{x,x}$,
we have $F_{y,y}=abF_{y,y}(abF_{y,y})^{-1}\subseteq F_{x,x}$.
Similarly $F_{x,x}\subseteq F_{y,y}$. Hence $F_{x,x}=F_{y,y}$ and
therefore the subgroup $H=F_{x,x}$ does not depend on the choice of
$x\in\uu$. Now by (a2) $F_{x,y}H=F_{x,y}F_{y,y}\subseteq F_{x,y}$.
Thus $aH\subseteq F_{x,y}$. On the other hand, by (a2)
$F_{x,y}F_{y,x}\subseteq F_{x,x}=H$. Hence
$F_{x,y}b=bF_{x,y}\subseteq H$ and therefore $F_{x,y}\subseteq
b^{-1}H$. That is, $F_{x,y}$ is contained in a coset of $H$ and
contains a coset of $H$. It follows that $F_{x,y}$ is a coset of
$H$.

(a4) According to (a3) the function $f:\uu\times\uu\to G/H$,
$f(x,y)=F_{x,y}$ is well defined. Let $x,y,u\in \uu$. By (a3)
$F_{x,y}F_{y,u}\subseteq F_{x,u}$. According to (a4) $F_{x,y}$,
$F_{y,u}$ and $F_{x,u}$ are cosets of $H$. Hence
$F_{x,y}F_{y,u}=F_{x,u}$. It follows that $f(x,y)f(y,u)=f(x,u)$.
Similarly by (a3) and (a4) $F_{x,y}F_{y,x}=F_{x,x}=H$ and therefore
$f(x,y)f(y,x)=1_{G/H}$. Thus $f(y,x)=f(x,y)^{-1}$. Since
$F_{x,x}=H$, we have $f(x,x)=1_{G/H}$. Assume now that $T\in\Lambda$
and $Ty\in\uu$. From the definition of $F_{x,y}$, it immediately
follows that $\phi(T)\in F_{y,Ty}$. Hence
$f(x,Ty)=f(x,y)f(y,Ty)=\phi(T)f(x,y)$.

It remains to demonstrate that $f$ is separately continuous. Since
$f(y,x)=f(x,y)^{-1}$, it suffices to verify that for any fixed $x\in
\uu$, the function $\theta:\uu\to G/H$, $\theta(y)=f(x,y)$ is
continuous. Let $A$ be a closed subset of $G/H$ and
$A_0=\pi^{-1}(A)$, where $\pi:G\to G/H$ is the canonical projection.
Clearly $\theta^{-1}(A)=\{y\in\uu:F_{x,y}\subseteq A_0\}$ coincides
with $\pi_1(N_x\cap (\uu\times A_0))$, where $\pi_1:\uu\times G\to
\uu$ is the projection onto $\uu$: $\pi_1(v,h)=v$. Since $G$ is
compact, Lemma~\ref{clos} implies that the map $\pi_1$ is closed and
therefore $\theta^{-1}(A)=\pi_1(N_x\cap (\uu\times A_0))$ is closed
in $\uu$. Since $A$ is an arbitrary closed subset of $G/H$, $\theta$
is continuous.
\end{proof}

The following lemma is a particular case of Lemma~\ref{L2}.

\begin{lemma} \label{l2} Let $X$ be a topological space with no
isolated points, $T:X\to X$ be a continuous map, $g$ be an element
of a compact abelian topological group $G$ and $\uu=\uu(T)$. For
each $x,y\in X$ we denote
\begin{equation}\label{nx}
N_x=\overline{\{(T^nx,g^n):n\in\Z_+\}}\ \ \text{and}\ \
F_{x,y}=\{h\in G:(y,h)\in N_x\}.
\end{equation}
Then conditions {\rm (a1--a3)} of Lemma~$\ref{L2}$ are satisfied and
\begin{itemize}
\item[\rm (a$4'$)]the function $f:\uu\times\uu\to G/H$, $f(x,y)=F_{x,y}$
is separately continuous and satisfies $f(x,y)=f(y,x)^{-1}$,
$f(x,Ty)=gf(x,y)$, $f(x,y)f(y,u)=f(x,u)$, $f(x,x)=1_{G/H}$ for any
$x,y,u\in\uu$.
\end{itemize}
\end{lemma}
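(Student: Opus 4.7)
My plan is to deduce this from Lemma~\ref{L2} by the obvious specialization: take $\Lambda=\{T^n:n\in\Z_+\}$ as a commutative subsemigroup of $C(X)$ and define $\phi:\Lambda\to G$ by $\phi(T^n)=g^n$. With these choices the set $N_x$ of (\ref{nx0}) coincides with the set $N_x$ of (\ref{nx}), so the two families $\{F_{x,y}\}$ agree, and $\uu(\Lambda)=\uu(T)=\uu$; hence the conclusions of L2 become precisely (a1)--(a3) above.

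The only preliminary point is that $\phi$ needs to be a well-defined homomorphism $\Lambda\to G$. Multiplicativity is automatic from $g^n g^m=g^{n+m}$, but well-definedness requires that $T^n=T^m$ (as maps on $X$) implies $g^n=g^m$. I would handle this via Lemma~\ref{ll1}: assuming $\uu\neq\varnothing$ (otherwise the parts of the statement referring to $\uu$ are vacuous, and the closedness in (a1) together with (a2) can be read off directly from the same computation as in the proof of L2 applied to the indexed family $\{(T^n x,g^n)\}_{n\in\Z_+}$), pick $x\in\uu$. Iterating Lemma~\ref{ll1} gives $T^k x\in\uu$ for every $k$, so if $T^n=T^m$ with $n>m$ the orbit of $T^m x$ would be contained in the finite set $\{T^k x:m\leq k<n\}$ yet dense in $X$; this forces $X$ to be finite and therefore discrete, contradicting the absence of isolated points. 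Hence $T^n\neq T^m$ whenever $n\neq m$, and $\phi$ is unambiguously defined.

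With $\phi$ in place, (a4$'$) is a direct consequence of (a4) of L2. The identities $f(x,y)=f(y,x)^{-1}$, $f(x,y)f(y,u)=f(x,u)$, $f(x,x)=1_{G/H}$, and the separate continuity transfer verbatim. For the distinctive relation $f(x,Ty)=gf(x,y)$, conclusion (a4) of L2 gives $f(x,Ty)=\phi(T)f(x,y)=gf(x,y)$ provided that $Ty\in\uu$, which is automatic for $y\in\uu$ by the invariance $T(\uu)\subseteq\uu$ from Lemma~\ref{ll1}. There is no real obstacle here: the whole argument is a reading-off from L2, with Lemma~\ref{ll1} supplying the two small ingredients (well-definedness of $\phi$ and invariance of $\uu$).
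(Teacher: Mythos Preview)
Your proposal is correct and follows essentially the same approach as the paper's proof: specialize Lemma~\ref{L2} with $\Lambda=\{T^n:n\in\Z_+\}$ and $\phi(T^n)=g^n$, then invoke Lemma~\ref{ll1} for the invariance $T(\uu)\subseteq\uu$ needed in the last clause of (a4$'$). In fact you are slightly more careful than the paper, which simply writes ``apply Lemma~\ref{L2}'' without commenting on the well-definedness of $\phi$; your use of Lemma~\ref{ll1} to rule out $T^n=T^m$ with $n\neq m$ when $\uu\neq\varnothing$ is a legitimate point the paper leaves implicit.
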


\begin{proof} We apply Lemma~\ref{L2} with
$\Lambda=\{T^n:n\in\Z_+\}$ and $\phi:\Lambda\to G$, $\phi(T^n)=g^n$.
Conditions (a1--a3) follow directly from Lemma~\ref{L2} and so does
(a4) since $T(\uu)\subseteq \uu$ according to Lemma~\ref{ll1} and
$\phi(T)=g$.
\end{proof}

Before proving Theorem~\ref{general}, we would like to introduce
some notation. Clearly, for any continuous function $f:[a,b]\to\T$,
there exists a unique, up to adding an integer, continuous function
$\phi:[a,b]\to \R$ such that $f(t)=e^{2\pi i\phi(t)}$. Then the
number $\phi(b)-\phi(a)\in \R$ is uniquely defined and called the
{\it winding number} of $f$ (sometimes also called the index of $f$
or the variation of the argument of $f$). We denote the winding
number of $f$ as $w(f)$. We would also like to remind the following
elementary and well-known properties of the winding number.

\begin{itemize}
\item[(w0)] If $f:[a,b]\to\T$ is continuous and $f(a)=f(b)$, then
$w(f)\in\Z$. Moreover, if also $g:[a,b]\to\T$ is continuous,
$g(a)=g(b)$ and there exists continuous $h:[a,b]\times [0,1]\to \T$
satisfying $h(t,0)=f(t)$, $h(t,1)=g(t)$ and $h(a,s)=h(b,s)$ for
$t\in[a,b]$ and $s\in[0,1]$, then $w(f)=w(g)$;
\item[(w1)] If $a<b<c$ and $f:[a,c]\to\T$ is continuous, then $w(f)=
w\bigl(f\bigr|_{[a,b]}\bigr)+w\bigl(f\bigr|_{[b,c]}\bigr)$;
\item[(w2)] If $f:[a,b]\to\T$, $h:[c,d]\to[a,b]$ are continuous, $h(c)=a$,
$h(d)=b$, then $w(f\circ h)= w(f)$;
\item[(w3)] If $f:[a,b]\to\T$ is continuous and $u\in\T$, then $w(uf)=
w(f)$;
\item[(w4)] If $f:[a,b]\to\T$ is continuous and not onto, then
$|w(f)|<1$.
\end{itemize}

The following lemma is a key ingredient of the proof of
Theorem~\ref{general}. It seems also that it has a potential for
different applications.

\begin{lemma}\label{map} Let $X$ be a path connected, locally path
connected and simply connected topological space, $T:X\to X$ be
continuous and and $g$ be an element of a compact abelian
topological group $G$. Assume also that there exists a continuous
map $f:X\to G$ such that $f(Ty)=gf(y)$ for any $y\in X$. Then either
$g=1_G$ or $O(T,x)$ is closed in $X$ for any $x\in X$.
\end{lemma}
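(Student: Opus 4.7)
The plan is to argue by contrapositive: assume $g\neq 1_G$ and deduce that $O(T,x)$ is closed for every $x\in X$. The first step is to reduce the target group to the circle. By Lemma~\ref{char}, pick a continuous homomorphism $\chi:G\to\T$ with $u:=\chi(g)\neq 1$, and set $F:=\chi\circ f:X\to\T$. The hypothesis on $f$ passes through: $F(Ty)=uF(y)$ for all $y\in X$. It suffices to work with $F$ and $u$ in place of $f$ and $g$.

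The decisive step is to produce a continuous logarithm of $F$. Because $X$ is path connected, locally path connected, and simply connected, the covering-space lifting criterion applied to the universal cover $p:\R\to\T$, $p(t)=e^{2\pi it}$, yields a continuous $\widetilde F:X\to\R$ with $F(y)=e^{2\pi i\widetilde F(y)}$. (Equivalently, one can assemble $\widetilde F$ by hand using the winding number and property (w0): simple connectedness of $X$ makes $w(F\circ\gamma)$ depend only on the endpoints of a path $\gamma$, so fixing a basepoint $x_0$ and setting $\widetilde F(y):=w(F\circ\gamma_{x_0,y})+\widetilde F(x_0)$ is well defined, and local path connectedness makes it continuous.) Fixing any $\alpha\in\R$ with $e^{2\pi i\alpha}=u$, the relation $F(Ty)=uF(y)$ gives $\widetilde F(Ty)-\widetilde F(y)-\alpha\in\Z$ for every $y$. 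This continuous $\Z$-valued function is constant on the connected space $X$, so $\widetilde F(Ty)-\widetilde F(y)=\beta$ for a fixed real $\beta$ with $e^{2\pi i\beta}=u\neq 1$; in particular $\beta\notin\Z$, hence $\beta\neq 0$.

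Iterating gives $\widetilde F(T^n x)=\widetilde F(x)+n\beta\to\pm\infty$ as $n\to\infty$. Now take $x\in X$; if $O(T,x)$ is finite it is automatically closed, so assume it is infinite. Let $z\in\overline{O(T,x)}$. Pick a net (or, in the metrizable case, a sequence) $T^{n_k}x\to z$. Since $O(T,x)$ is infinite and $n_k\mapsto T^{n_k}x$ has at most one accumulation of each finite value, we may extract $n_k\to\infty$; then by continuity $\widetilde F(T^{n_k}x)\to\widetilde F(z)\in\R$, contradicting $|\widetilde F(x)+n_k\beta|\to\infty$ unless $z=T^mx$ for some $m$, i.e.\ $z\in O(T,x)$. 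This proves $O(T,x)$ is closed. The main obstacle is the construction of the lift $\widetilde F$, which is precisely where all three topological hypotheses on $X$ are used at once; everything after that is a routine continuity-plus-connectedness argument.
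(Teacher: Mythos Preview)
Your approach is correct and takes a genuinely different route from the paper's. Both begin by reducing, via a character, to a continuous $F:X\to\T$ with $F(Ty)=uF(y)$, $u\neq 1$. From there you invoke the covering-space lifting criterion to produce a continuous lift $\widetilde F:X\to\R$ with $\widetilde F(Ty)=\widetilde F(y)+\beta$, $\beta\neq 0$, and conclude that $\widetilde F(T^nx)=\widetilde F(x)+n\beta$ escapes to infinity, forcing the orbit to be closed. The paper instead argues directly with winding numbers: assuming an accumulation point $y\in\overline{O(T,x)}\setminus O(T,x)$, it builds an explicit closed loop in $X$ out of many $T$-iterates of a fixed path from $x$ to $Tx$ together with two short connecting arcs near $y$, and shows that simple connectedness forces $w(F\circ\rho)=0$ while a direct count (using (w0)--(w4)) gives $|w(F\circ\rho)|>0$. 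Your route is shorter, more conceptual, and treats the cases $u$ of finite and infinite order in $\T$ uniformly (the paper splits them); the price is importing the lifting theorem as a black box, whereas the paper's proof is entirely self-contained from the elementary winding-number properties listed just before the lemma. Your parenthetical ``by hand'' construction of $\widetilde F$ via winding numbers is in fact essentially the paper's argument, repackaged.

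One small wrinkle: the final paragraph is a bit loose about extracting a subnet with $n_k\to\infty$. A cleaner finish avoids nets altogether: for $z\in\overline{O(T,x)}$, the open set $V=\widetilde F^{-1}\bigl((\widetilde F(z)-|\beta|/2,\,\widetilde F(z)+|\beta|/2)\bigr)$ contains at most one orbit point $T^{n_0}x$ (since the values $\widetilde F(T^nx)$ are $|\beta|$-separated); if $z\neq T^{n_0}x$ then, by Hausdorffness, $V\setminus\{T^{n_0}x\}$ is a neighborhood of $z$ disjoint from $O(T,x)$, contradicting $z\in\overline{O(T,x)}$.
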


\begin{proof} Assume that there exists $x\in X$ such that the orbit
$O(T,x)$ is non-closed and $g\neq 1_G$. By Lemma~\ref{char}, there
exists a continuous homomorphism $\phi:G\to \T$ such that
$z=\phi(g)\neq 1$. Since $\T$ has no closed subgroups except $\T$
and $U_n=\{u\in \T:u^n=1\}$ for $n\in\N$, we see that $\phi(G)$,
being a non-trivial closed subgroup of $\T$, coincides either with
$\T$ or with $U_n$ for some $n\geq 2$. Let $r=\phi\circ f$. Since
$\phi$ and $f$ are continuous, $r$ is a continuous map from $X$ to
$\T$. Moreover, from the properties of $\phi$ and $f$ it follows
that
\begin{equation}\label{gn1}
\text{$r(Tv)=zr(v)$ for any $v\in X$}.
\end{equation}

{\bf Case} \ $\phi(G)=U_n$ for some $n\geq 2$. Since $z\neq 1$,
(\ref{gn1}) implies that $r(Tv)\neq r(v)$ for any $v\in X$. Hence
$r(X)$ consists of more than one element. Thus, $r$ is a continuous
map from $X$ to $\T$, whose range, being a subset of $\phi(G)$, is
finite and consists of more than one element. The existence of such
a map contradicts connectedness of $X$.

{\bf Case} \ $\phi(G)=\T$. Pick $y\in \overline{O(T,x)}\setminus
O(T,x)$ and $z_0\in\T\setminus\{r(y)\}$. Since $r$ is continuous,
$r^{-1}(\T\setminus\{z_0\})$ is a neighborhood of $y$ in $X$. Since
$X$ is locally path connected, we can pick a neighborhood $U$ of $y$
such that $U$ is path connected and $U\subseteq
r^{-1}(\T\setminus\{z_0\})$. Since $y\in \overline{O(T,x)}$, we can
pick $n\in\Z_+$ such that $T^nx\in U$. Since $U$ is path connected,
there is continuous $\gamma_0:[0,1]\to U$ such that $\gamma_0(0)=y$,
$\gamma_0(1)=T^nx$. Since $X$ is path connected, there is continuous
$\alpha:[0,1]\to X$ such that $\alpha(0)=x$ and $\alpha(1)=Tx$. Let
$\beta:[0,1]\to\T$, $\beta=r\circ\alpha$. By (\ref{gn1}),
$\beta(1)/\beta(0)=z\neq1$. Hence the winding number $w(\beta)$ is
non-integer and therefore non-zero. Next, since $y\in
\overline{O(T,Tx)}\setminus O(T,x)$, we see that the set
$\{k\in\N:T^kx\in U\}$ is infinite. Thus we can pick $m\in\N$ such
that $T^{n+m}x\in U$ and $m|w(\beta_1)|>2$. Since $T^{n+m}x\in U$
and $U$ is path connected, there exists continuous
$\gamma_1:[0,1]\to U$ such that $\gamma_1(0)=T^{n+m}x$ and
$\gamma_1(1)=y$.

Consider the path $\rho:[0,m+2]\to X$ defined by the formula
\begin{equation}\label{path}
\rho(t)=\left\{\begin{array}{ll} \gamma_0(t)&\text{if}\ \ t\in[0,1);
\\ T^{n+k-1}\alpha(t-k)&\text{if}\ \ t\in[k,k+1),\ 1\leq k\leq m;\\
\gamma_1(t-m-1)&\text{if}\ \ t\in[m+1,m+2].
\end{array}
\right.
\end{equation}
Since $T^{n+k-1}\alpha(1)=T^{n+k}\alpha(0)=T^{n+k}x$ for $1\leq
k\leq m$, $\gamma_0(1)=T^n\alpha(0)=T^nx$,
$T^{n+m-1}\alpha(1)=\gamma_1(0)=T^{n+m}x$ and
$\gamma_0(0)=\gamma_1(1)=y$, we see that $\rho$ is continuous and
$\rho(0)=\rho(m+2)$. Since $X$ is simply connected, there exists a
continuous map $\tau:[0,m+2]\times [0,1]\to X$ such that
$\tau(0,s)=\tau(m+2,s)$, $\tau(t,0)=\rho(t)$ and $\tau(t,1)=x_0\in
X$ for any $s\in [0,1]$ and $t\in[0,m+2]$. Thus, $r\circ \tau$
provides a homotopy of the path $r\circ\rho:[0,m+2]\to\T$ and a
constant path. According to (w0), $w(r\circ\rho)=0$. Then by (w1),
$$
0=w(r\circ\rho)=\sum_{j=0}^{m+1}w\bigl(r\circ\rho\bigr|_{[j,j+1]}\bigr).
$$
Since $\gamma_0$ and $\gamma_1$ take values in $U\subseteq
r^{-1}(\T\setminus\{z_0\})$, $r\circ\rho\bigr|_{[0,1]}$ and
$r\circ\rho\bigr|_{[m+1,m+2]}$ take values in $\T\setminus\{z_0\}$.
According to (w4), $|w\bigl(r\circ\rho\bigr|_{[0,1]}\bigr)|<1$ and
$w\bigl(r\circ\rho\bigr|_{[m+1,m+2]}\bigr)<1$. Thus, by the last
display,
$$
\biggl|\sum_{j=1}^{m}w\bigl(r\circ\rho\bigr|_{[j,j+1]}\bigr)\biggr|<2.
$$
On the other hand, from (\ref{path}) and (\ref{gn1}) we see that for
each $j\in\{1,\dots,m\}$,
$$
r\circ\rho(t)=r(T^{n+j-1}\alpha(t-j))=z^{n+j-1}r(\alpha(t-j))=z^{n+j-1}\beta(t-j)
\ \ \text{for any $t\in[j,j+1]$}.
$$
Thus, according to (w2) and (w3),
$w\bigl(r\circ\rho\bigr|_{[j,j+1]}\bigr)=w(\beta)$ for $1\leq j\leq
m$. Hence
$$
2>\biggl|\sum_{j=1}^{m}w\bigl(r\circ\rho\bigr|_{[j,j+1]}\bigr)\biggr|=|mw(\beta)|=
m|w(\beta)|>2.
$$
This contradiction completes the proof.
\end{proof}

Now we are ready to prove Theorem~\ref{general}.

\begin{proof}[Proof of Theorem~\ref{general}]
We can disregard the case of one-element $X$ for its triviality. Let
$x\in Y$ and $N_x$ be the set defined in (\ref{nx}). By
Lemma~\ref{l2}, the set $H=F_{x,x}=\{h\in G:(x,h)\in N_x\}$ is a
closed subgroup of $G$. We shall show that $H=G$. By Lemma~\ref{l2},
there exists a separately continuous function $f:Y\times Y\to G/H$
such that $f(v,Ty)=gf(v,y)$ for any $v,y\in Y$. Consider the
function $\psi:Y\to G/H$, $\psi(y)=f(x,y)$. Then $\psi$ is
continuous and $\psi(Ty)=g\psi(y)=(gH)\psi(y)$ for any $y\in Y$.
Since $x\in Y\subseteq\uu(T)$, the orbit $O(T,x)$ is dense in $X$.
On the other hand, since $T(Y)\subseteq Y$, $O(T,x)\subseteq Y$ and
therefore $Y$ is dense in $X$. Since $X$ is not one-element, then so
is $Y$. Since $Y$ is connected, $Y$ has no isolated points. By
Lemma~\ref{ll1}, $x\in \overline{O(T,Tx)}\setminus O(T,Tx)$. Hence
$O(T,Tx)$ is not closed in $Y$. Lemma~\ref{map}, applied to the
restriction of $T$ to $Y$, implies that $gH=1_{G/H}$. On the other
hand, $g$ is a generator of $G$ and therefore $gH$ is a generator of
$G/H$. It follows that the group $G/H$ is trivial. That is, $H=G$.

By Lemma~\ref{l2}, for each $y\in Y$, the set $F_{x,y}=\{h\in
G:(y,h)\in N_x\}$ is a coset of $H$ in $G$. Since $H=G$, we have
$F_{x,y}=G$ for any $y\in Y$. Hence $Y\times G\subseteq N_x$. Since
$Y$ is dense in $X$, the last inclusion implies that $N_x=X\times
G$. Thus, $\{(T^nx,g^n):n\in\Z_+\}$ is dense in $X\times G$.
\end{proof}

\section{Connectedness \label{connect}}

Let $X$ be a topological vector space. We can consider the
corresponding projective space $\PP X$. As a set $\PP X$ consists of
one-dimensional linear subspaces of $X$. We define the natural
topology on $\PP X$ by declaring the map $\pi:X\setminus \{0\}\to\PP
X$, $\pi(x)=\langle x\rangle$ continuous and open, where $\langle
x\rangle$ is the linear span of the one-element set $\{x\}$. If
$X=\K^{n+1}$, then $\PP X$ is the usual $n$-dimensional (real or
complex) projective manifold. Note that if $T\in L(X)$ is injective,
it induces the continuous map $T_P:\PP X\to \PP X$, $T_P\langle
x\rangle=\langle Tx\rangle$. Finally, we can consider the space
$\PP_+ X$ of the rays in $X$. Namely, the elements of $\PP_+ X$ are
the rays $[x]=\{tx:t\geq 0\}$ for $x\in X\setminus \{0\}$ and the
topology on $\PP_+X$ is defined by declaring the map
$\pi_+:X\setminus \{0\}\to\PP_+ X$, $\pi(x)=[x]$ continuous and
open. Clearly, $\PP_+ X$ is homeomorphic to the unit sphere in $X$
if $X$ is a normed space.

In order to apply Theorem~\ref{general}, we need the following
lemmas.

\begin{lemma}\label{conn1}
Any topological vector space is path connected, locally path
connected and simply connected.
\end{lemma}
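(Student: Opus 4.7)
The plan is to reduce all three assertions to the single observation that a topological vector space is contractible via the straight-line homotopy. The map $H\colon X\times [0,1]\to X$ defined by $H(x,t)=(1-t)x$ is continuous by the joint continuity of scalar multiplication, and contracts $X$ onto $0$. Path connectedness then follows at once: for $x,y\in X$, the segment $t\mapsto (1-t)x+ty$ is a continuous path from $x$ to $y$, continuity being a consequence of the axioms of a topological vector space. Simple connectedness follows in the same breath: for any continuous $f\colon\T\to X$, the map $F\colon\T\times[0,1]\to X$ given by $F(z,s)=(1-s)f(z)$ is continuous, and satisfies $F(z,0)=f(z)$ and $F(z,1)=0$ for every $z\in\T$, matching the definition of simple connectedness given in the introduction with $x_0=0$.

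The only property requiring slightly more care is local path connectedness, for which I would invoke the classical fact that in any topological vector space the origin admits a base of \emph{balanced} neighborhoods (recall $W\subseteq X$ is balanced if $\lambda W\subseteq W$ for every scalar $\lambda$ with $|\lambda|\leq 1$). Given $x\in X$ and an open neighborhood $U$ of $x$, I would choose a balanced neighborhood $W$ of $0$ with $x+W\subseteq U$ and set $V=x+W$. For any $y\in V$, writing $y=x+w$ with $w\in W$, the straight-line path $\gamma(t)=x+tw=(1-t)x+ty$ satisfies $\gamma(0)=x$, $\gamma(1)=y$, and $\gamma(t)\in x+W\subseteq U$ for every $t\in[0,1]$ by balancedness of $W$.

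No step is genuinely hard; the lemma is essentially a compilation of standard facts about topological vector spaces, and I would expect the paper's proof to be similarly brief. The only non-elementary ingredient is the existence of a base of balanced neighborhoods at $0$, which is a classical result found in any standard textbook on topological vector spaces. Consequently the main obstacle, such as it is, amounts simply to recognizing that local path connectedness is the right place to bring in balancedness, while path connectedness and simple connectedness are immediate consequences of contractibility.
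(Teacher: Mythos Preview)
Your proof is correct and essentially identical to the paper's: the paper uses the same straight-line segment $f_{x,y}(t)=(1-t)x+ty$ for path connectedness, the same scalar-multiplication homotopy $h(t,s)=tf(s)$ for simple connectedness, and the same appeal to a base of balanced neighborhoods (with the same segment argument) for local path connectedness. The only cosmetic difference is that you explicitly frame everything through contractibility, whereas the paper just writes down the three arguments directly.
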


\begin{lemma}\label{conn2} If $X$ is a topological vector space
of real dimension $\geq3$, then $X\setminus\{0\}$ is path connected,
locally path connected and simply connected.
\end{lemma}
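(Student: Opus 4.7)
The plan is to verify the three properties in turn. Path connectedness and local path connectedness follow quickly from the TVS structure and Lemma~\ref{conn1}; simple connectedness is the substantive step that actually uses the hypothesis that the real dimension of $X$ is at least $3$.

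For path connectedness, given $x, y \in X \setminus \{0\}$, the straight segment $\{(1-s)x + sy : s \in [0,1]\}$ avoids $0$ unless $y = -tx$ for some $t > 0$; in that exceptional case, since the real dimension is at least $2$, I would pick $z \in X$ linearly independent of $x$ and concatenate the segments $[x,z]$ and $[z,y]$, both of which lie in $\spann\{x,z\}\setminus\{0\}$ and hence avoid $0$. For local path connectedness, $X\setminus\{0\}$ is open in $X$ (as $\{0\}$ is closed) and local path connectedness passes to open subspaces; Lemma~\ref{conn1} then supplies the required base of path connected neighborhoods.

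For simple connectedness, given a continuous $f : \T \to X \setminus \{0\}$ with compact image $K = f(\T)$, the plan is to find $v \in X \setminus \{0\}$ such that the straight-line homotopy $H(z, s) = (1-s) f(z) + s v$ stays inside $X \setminus \{0\}$; such an $H$ would contract $f$ to the constant loop at $v$. The equation $H(z, s) = 0$ forces $v = -\frac{s}{1-s} f(z)$ with $s \in (0,1)$, so the requirement on $v$ reduces to $v \neq 0$ together with $v \notin C$, where $C = \{-tw : t \geq 0,\ w \in K\}$; equivalently, no non-positive scalar multiple of any point of $K$ equals $v$.

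The main obstacle is showing $C \neq X$ under the hypothesis that the real dimension of $X$ is at least $3$. Writing $C = \bigcup_{n \in \N} (-nK_0)$ with $K_0 = \{-tw : t \in [0,1],\ w \in K\}$ compact, each piece $-nK_0$ is the continuous image of the two-dimensional compact set $[0,1]\times\T$. In the finite-dimensional case each piece has empty interior by invariance of domain, so the Baire category theorem gives $C \neq X$. In the infinite-dimensional case, compact subsets of an infinite-dimensional TVS automatically have empty interior, and the same Baire-style argument applies whenever $X$ is Baire; the delicate point will be extending this to TVS that are neither locally convex nor Baire, presumably by reducing via a suitable continuous map to a well-chosen finite-dimensional section where a direct dimension count rules out $C = X$. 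Once such $v$ is found, the homotopy $H$ completes the proof.
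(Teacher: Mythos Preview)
Your straight-line homotopy approach to simple connectedness is genuinely broken, and not just in the non-Baire case you flagged: it already fails in $\R^3$. The claim that each piece $-nK_0$ has empty interior ``by invariance of domain'' is incorrect; invariance of domain concerns injective maps $\R^n\to\R^n$ and says nothing about continuous images of $[0,1]\times\T$ in $\R^3$. In fact there exist continuous surjections $\T\to S^2$ (compose a Peano curve $[0,1]\to[0,1]^2$ with a quotient map $[0,1]^2\to S^2$, arranging the endpoints to match). For such an $f:\T\to S^2\subset\R^3\setminus\{0\}$ the cone $C=\{-tf(z):t\geq 0,\ z\in\T\}$ is all of $\R^3$: every $v\neq 0$ equals $-|v|\cdot f(z)$ for some $z$ with $f(z)=-v/|v|$. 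Hence no point $v$ admits a straight-line homotopy from $f$ to the constant $v$ inside $\R^3\setminus\{0\}$, even though $f$ is of course null-homotopic there. So the plan cannot be repaired by a finer choice of $v$; the homotopy itself must be allowed to be non-linear.

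The paper's argument sidesteps this entirely. It first shows that $C(\T,X\setminus\{0\})$ is locally path connected (straight-line homotopies work \emph{locally}, between nearby loops), so path components are open. Then Lemma~\ref{dense1} provides a piecewise-linear approximation $g$ to $f$ lying in the same open component, and $g$ has finite-dimensional span; enlarging that span to a subspace $L$ with $\dim_\R L\geq 3$, one invokes the classical fact that $\R^n\setminus\{0\}$ is simply connected for $n\geq 3$ to contract $g$ inside $L\setminus\{0\}$. The reduction to finite dimensions is the key idea you are missing.
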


\begin{lemma}\label{conn3} If $X$ is a complex topological
vector space, then $\PP X$ is path connected, locally path connected
and simply connected.
\end{lemma}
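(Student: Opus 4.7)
The plan begins by dispensing with the trivial cases. If $X$ has complex dimension at most $1$, then $\PP X$ is either empty or a single point and all three properties are automatic. Assume from now on that the complex dimension of $X$ is at least $2$, hence its real dimension is at least $4\geq 3$, which is what is needed to apply Lemma~\ref{conn2} to $X$.

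The first two properties are straightforward. For path-connectedness, given $[x],[y]\in\PP X$, either $[x]=[y]$ or the representatives $x,y\in X\setminus\{0\}$ are $\C$-linearly independent; in the latter case $(1-t)x+ty\neq 0$ for all $t\in[0,1]$ (a real convex combination of two $\C$-linearly independent vectors is a nontrivial complex linear combination), so $t\mapsto[(1-t)x+ty]$ is a continuous path from $[x]$ to $[y]$ in $\PP X$. For local path-connectedness at $[x_0]\in\PP X$, given any open neighborhood $U$ of $[x_0]$, I would choose a balanced open neighborhood $W$ of $0$ in $X$ with $x_0+W\subseteq\pi^{-1}(U)$ and $x_0\notin W$; balancedness forces $-x_0\notin W$, so $0\notin x_0+W$, and since $W$ is star-shaped at $0$ the set $x_0+W$ is star-shaped at $x_0$ and hence path-connected. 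Because $\pi$ is continuous and open, $\pi(x_0+W)$ is an open path-connected neighborhood of $[x_0]$ contained in $U$.

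Simple connectedness is the substantive part. Given a continuous loop $\gamma\colon\T\to\PP X$, viewed as a map $[0,1]\to\PP X$ with matching endpoints, my plan is to produce a continuous lift $\tilde\gamma\colon[0,1]\to X\setminus\{0\}$ with $\pi\circ\tilde\gamma=\gamma$. Once this is available, $\tilde\gamma(0)$ and $\tilde\gamma(1)=\lambda\tilde\gamma(0)$ for some $\lambda\in\C\setminus\{0\}$ belong to the common fiber $(\C\setminus\{0\})\,\tilde\gamma(0)$, so I can concatenate $\tilde\gamma$ with a path $\eta$ in that fiber from $\tilde\gamma(1)$ back to $\tilde\gamma(0)$ (available since $\C\setminus\{0\}$ is path-connected). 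The resulting loop in $X\setminus\{0\}$ is null-homotopic by Lemma~\ref{conn2}, and pushing the null-homotopy forward through $\pi$ yields a null-homotopy of $\gamma$ concatenated with the constant loop $\pi\circ\eta$ at $[\tilde\gamma(0)]$, which is equivalent to a null-homotopy of $\gamma$ itself.

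The main obstacle is the construction of the lift $\tilde\gamma$. My plan is to cover the compact image $\gamma(\T)$ by finitely many open sets $V_1,\dots,V_n\subseteq\PP X$ on each of which $\pi$ admits a continuous section $s_j\colon V_j\to X\setminus\{0\}$, to partition $\T$ into small arcs $J_1,\dots,J_N$ with $\gamma(J_k)\subseteq V_{j_k}$, lift each arc by $s_{j_k}\circ\gamma|_{J_k}$, and reconcile the jumps at shared endpoints by multiplying successive lifts by suitable continuous $(\C\setminus\{0\})$-valued functions (possible because $\C\setminus\{0\}$ is path-connected). The technical heart is thus the existence of the local sections $s_j$: given $x_0\in X\setminus\{0\}$, I would obtain such a section by exhibiting a small open slice through $x_0$, transverse to the complex line $\C x_0$, on which $\pi$ restricts to a homeomorphism onto its image, using the balanced open neighborhoods of $0$ provided by the topological vector space structure of $X$; this is the step I expect to require the most care.
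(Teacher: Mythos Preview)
Your overall strategy for simple connectedness is the same as the paper's: lift a loop in $\PP X$ to a path in $X\setminus\{0\}$, close it into a loop using that the fibre $\C^*$ is path connected, and then invoke Lemma~\ref{conn2}. (Your treatment of path connectedness and local path connectedness likewise matches the paper's, and your uniform handling of the finite-dimensional case via Lemma~\ref{conn2} is a harmless simplification of the paper's separate citation of the simple connectedness of $\PP\C^n$.)

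The genuine difference is in how the lift is produced. The paper does \emph{not} build local sections of $\pi$; instead it proves a separate lifting lemma (Lemma~\ref{exten1}/\ref{exten2}) by applying Michael's continuous selection theorem to the lower-semicontinuous multimap $t\mapsto\pi^{-1}(\gamma(t))$, exploiting that $\gamma([0,1])$ is compact metrizable so that the preimage sits in a complete metrizable piece of $X$. Your plan is to cover $\gamma(\T)$ by sets on which $\pi$ admits a continuous section, obtained from a ``transverse slice'' through a chosen $x_0$. In a locally convex space this is immediate: pick $f\in X^*$ with $f(x_0)=1$ and use the affine hyperplane $f^{-1}(1)$. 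But the lemma is stated for \emph{arbitrary} topological vector spaces, where $X^*$ may be trivial (e.g.\ $L^p[0,1]$ with $0<p<1$), so no such hyperplane exists and there is no obvious transverse slice built from balanced neighbourhoods alone. This is exactly the difficulty the paper flags in its closing remark about non-locally-convex spaces, and it is why the paper resorts to the selection theorem. Your ``most care'' step is thus not just delicate but, in the stated generality, essentially \emph{is} a selection problem; carrying it out rigorously will require a tool of comparable strength to Michael's theorem, at which point the two approaches converge.
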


\begin{lemma}\label{conn4} If $X$ is a topological
vector space of real dimension $\geq3$, then $\PP_+ X$ is path
connected, locally path connected and simply connected.
\end{lemma}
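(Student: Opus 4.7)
The plan is to transfer the connectedness properties of $X\setminus\{0\}$ (supplied by Lemma~\ref{conn2}) to $\PP_+ X$ via the continuous open quotient map $\pi_+\colon X\setminus\{0\}\to\PP_+ X$. Path connectedness is immediate: $\PP_+ X=\pi_+(X\setminus\{0\})$ is the continuous image of a path connected space. For local path connectedness at a point $[x]$ of an open set $V\subseteq\PP_+ X$, the preimage $\pi_+^{-1}(V)$ is an open neighborhood of $x$ in $X\setminus\{0\}$, so by Lemma~\ref{conn2} it contains a path connected open neighborhood $W$ of $x$; the image $\pi_+(W)\subseteq V$ is then open (because $\pi_+$ is an open map), path connected, and contains $[x]$.

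For simple connectedness, let $f\colon\T\to\PP_+ X$ be continuous. The strategy is lift--contract--project: (i) lift $f$, regarded as a loop on $[0,1]$ with $0\sim 1$, to a continuous path $\tilde f\colon[0,1]\to X\setminus\{0\}$ with $\pi_+\circ\tilde f=f$; (ii) turn this path into a genuine loop $\hat f\colon\T\to X\setminus\{0\}$ still lifting $f$; (iii) appeal to simple connectedness of $X\setminus\{0\}$ (Lemma~\ref{conn2}) to extend $\hat f$ to $\tilde F\colon\T\times[0,1]\to X\setminus\{0\}$ constant on $\T\times\{1\}$, and set $F=\pi_+\circ\tilde F$. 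Step (ii) is easy once (i) is accomplished: since $\tilde f(0)$ and $\tilde f(1)$ project to the same ray $f(0)$, there is a unique $c>0$ with $\tilde f(1)=c\tilde f(0)$, and then $\hat f(t):=c^{-t}\tilde f(t)$ is a loop in $X\setminus\{0\}$ still satisfying $\pi_+\circ\hat f=f$.

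The crux is step (i), producing the path lift. The plan here is first to establish local continuous sections of $\pi_+$---for each $[x_0]\in\PP_+ X$, an open neighborhood $V\ni[x_0]$ and a continuous $s\colon V\to X\setminus\{0\}$ with $\pi_+\circ s=\mathrm{id}_V$ and $s([x_0])=x_0$---and then patch them using compactness. Granted such local sections, one picks a finite cover $f([0,1])\subseteq V_1\cup\dots\cup V_n$ with sections $s_1,\dots,s_n$ and a partition $0=t_0<t_1<\dots<t_n=1$ such that $f([t_{k-1},t_k])\subseteq V_{j_k}$, and inductively defines $\tilde f|_{[t_{k-1},t_k]}=\lambda_k\,(s_{j_k}\circ f)$, with $\lambda_k>0$ uniquely determined by matching the value at $t_{k-1}$ with that of the previous piece (both candidates lie on the common ray $f(t_{k-1})$, hence differ by a positive scalar). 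The delicate point is the construction of the local sections: one cuts out a slice transverse to the ray $\R_+ x_0$ and uses joint continuity of scalar multiplication together with the linear structure of $X$ to continuously normalize representatives on nearby rays; this is where the TVS hypothesis, rather than mere topological structure, is genuinely used.
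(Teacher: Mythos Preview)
Your overall architecture---lift a loop through $\pi_+$, contract in $X\setminus\{0\}$ via Lemma~\ref{conn2}, project back down---is precisely the paper's, and your endpoint-matching trick $\hat f(t)=c^{-t}\tilde f(t)$ is exactly Lemma~\ref{exten3}. Your handling of path connectedness and local path connectedness by pushing forward along the open continuous surjection $\pi_+$ is correct and, if anything, cleaner than the paper's direct verification; it also treats the finite- and infinite-dimensional cases uniformly, whereas the paper splits them.

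The one substantive divergence is in how the path lift is manufactured. You propose to build local continuous sections of $\pi_+$ by ``cutting out a slice transverse to the ray $\R_+x_0$'' and then patching along a finite cover of $f([0,1])$. In a locally convex space this is trivial: choose $\varphi\in X^*$ with $\varphi(x_0)=1$ and set $s([x])=x/\varphi(x)$ on $\{[x]:\varphi(x)>0\}$. But the lemma is stated for \emph{arbitrary} topological vector spaces, where $X^*$ may be $\{0\}$ (e.g.\ $L^p[0,1]$ with $0<p<1$); there is then no continuous linear functional to normalize by, the Minkowski functional of a balanced open set is only upper semicontinuous, and ``transverse slice'' has no evident construction. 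You correctly flag this as the delicate point, but you do not carry it out, and the paper's closing Remark identifies exactly this as the reason the non-locally-convex case is hard. As written, this is a gap.

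The paper circumvents the problem by never asking for local sections on open pieces of $\PP_+X$. Instead it works only along the given path: $f_0([0,1])$ is compact and metrizable, its preimage in $X\setminus\{0\}$ is completely metrizable, and Michael's continuous selection theorem (invoked in Lemma~\ref{exten1}) then supplies the lift $[0,1]\to X\setminus\{0\}$ directly, which Lemma~\ref{exten3} closes into a loop. If you wish to keep your local-sections-plus-patching route in full generality, you must either prove that the free $\R_+$-action on $X\setminus\{0\}$ yields a locally trivial principal bundle without local convexity (a nontrivial statement in its own right), or else import a selection theorem as the paper does.
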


The proof of the above lemmas comes as a combination of well-known
facts and application of standard techniques of infinite dimensional
topology. We include the complete proofs for convenience of the
reader, but ban them to the last section.

\medskip

{\bf Remark.} \ It is worth noting that if $X$ is a real topological
vector space of dimension $\geq 2$, then $\PP X$ fails to be simply
connected.

\section{First applications of Theorem~\ref{general} \label{applic1}}

In this section, we derive a handful of known results and their
slight generalizations from Theorem~\ref{general}. We start with
proving of a few corollaries of Theorem~\ref{general}.

We need the following theorem, proved  in \cite{ww}, which is a
generalization to arbitrary topological vector spaces of a result
known previously for locally convex topological vector spaces.

\begin{thmw}Let $T$ be a continuous linear operator on an infinite
dimensional  topological vector space $X$. If $T$ is hypercyclic,
then $p(T)$ has dense range for any non-zero polynomial $p$. If $T$
is supercyclic and the space $X$ is complex, then the point spectrum
$\sigma_p(T^*)$ is either empty or a one-element set $\{z\}$ with
$z\in\C\setminus\{0\}$. If $\sigma_p(T^*)=\varnothing$, then
$p(T)(X)$ is dense in $X$ for each non-zero polynomial $p$. If
$\sigma_p(T^*)=\{z\}$, then $p(T)(X)$ is dense in $X$ for any
polynomial $p$ such that $p(z)\neq 0$. Finally, if $T$ is
supercyclic and the space $X$ is real, then anyway, there exists
$z_0\in\C\setminus\{0\}$ such that $p(T)(X)$ is dense in $X$ for any
real polynomial $p$ such that $p(z_0)\neq 0$.
\end{thmw}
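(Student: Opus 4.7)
The plan is to reduce each claim to a finite-dimensional question via quotient constructions, bypassing the Hahn--Banach arguments that are unavailable in general topological vector spaces.

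For the hypercyclic statement, suppose that $p(T)(X)$ is not dense and set $M=\overline{p(T)(X)}$. Since $T$ and $p(T)$ commute, $M$ is $T$-invariant, and $T$ descends to a continuous operator $\tilde T:X/M\to X/M$ with $p(\tilde T)=0$. A hypercyclic vector $x_0$ of $T$ projects to a hypercyclic vector $\tilde x_0$ of $\tilde T$, and the orbit of $\tilde x_0$ lies in the linear span of $\{\tilde T^k\tilde x_0:0\le k<\deg p\}$, so $X/M$ is nontrivial and has dimension at most $\deg p$. This contradicts the classical fact that no finite-dimensional vector space of positive dimension admits a hypercyclic linear operator.

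For the supercyclic conclusions I first pin down $\sigma_p(T^*)$. If $T^*f=0$ for some nonzero $f\in X^*$, then $T(X)\subseteq\ker f$, and the entire supercyclic orbit lies in the proper closed set $\K x_0\cup\ker f$, a contradiction; hence $0\notin\sigma_p(T^*)$. If $f_1,f_2\in X^*$ are eigenvectors of $T^*$ with distinct eigenvalues $\lambda_1\ne\lambda_2$, then $f_1,f_2$ are linearly independent and $F=(f_1,f_2):X\to\C^2$ is surjective. The image of the orbit under $F$ is the union over $n\in\Z_+$ of the complex lines through $(\lambda_1^n f_1(x_0),\lambda_2^n f_2(x_0))$; the corresponding set of directions $[1:(\lambda_2/\lambda_1)^n(f_2(x_0)/f_1(x_0))]$ in $\PP^1(\C)$ is contained in a finite set, in a circle, or in a sequence converging to $[1:0]$ or $[0:1]$, so the union of lines is never dense in $\C^2$. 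Therefore $\sigma_p(T^*)$ has at most one element.

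To show that $p(T)(X)$ is dense in the complex case under the stated hypotheses, I repeat the quotient construction: the induced $\tilde T$ on $X/M$ inherits supercyclicity from $T$ and satisfies $p(\tilde T)=0$, so $X/M$ is finite-dimensional. Since $\C^n$ admits no supercyclic operator for $n\ge 2$, we obtain $\dim_{\C}X/M=1$ and $\tilde T$ is multiplication by a root $\lambda$ of $p$. The projection $X\to X/M\simeq\C$ is then a nonzero element of $X^*$ that is an eigenvector of $T^*$ with eigenvalue $\lambda$, contradicting either $\sigma_p(T^*)=\varnothing$ or the assumption $p(z)\ne 0$ when $\sigma_p(T^*)=\{z\}$. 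In the real case, $X/M$ may additionally be two-dimensional; then the classification of supercyclic operators on $\R^2$ forces $\tilde T$ to be a scalar multiple of an irrational rotation, whose complex eigenvalues $z_0,\overline{z_0}\in\C\setminus\R$ are necessarily roots of $p$. Choosing $z_0$ to be this distinguished complex scalar (or the real eigenvalue produced by a one-dimensional quotient), with uniqueness verified by a real-case analogue of the two-eigenvector argument above applied to the complexification of $T^*$, yields the statement. The main technical obstacle throughout is the classification of supercyclic operators on finite-dimensional real and complex spaces, together with the need to sidestep Hahn--Banach by arguing consistently through quotients.
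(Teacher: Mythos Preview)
The paper does not prove Theorem~W at all; it is quoted verbatim as a result of Wengenroth~\cite{ww} and used as a black box throughout. So there is no ``paper's own proof'' to compare against. That said, your quotient-based strategy is precisely the one Wengenroth introduced to extend these facts beyond the locally convex setting, and your argument is essentially correct.

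A few remarks on the details. In the hypercyclic and complex supercyclic parts your reasoning is clean: the quotient $X/M$ is a Hausdorff topological vector space (since $M$ is closed), hence any finite-dimensional such space is linearly homeomorphic to $\K^n$, and the projection $X\to X/M\simeq\K$ in the one-dimensional case genuinely produces a continuous eigenfunctional of $T^*$ without invoking Hahn--Banach. Your two-eigenvalue argument for $|\sigma_p(T^*)|\leq 1$ is also fine once you note that continuity of $F=(f_1,f_2)$ already gives $F(\overline{A})\subseteq\overline{F(A)}$, so a dense orbit maps to a dense set in $\C^2$; openness of $F$ is not needed.

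The real case is where your sketch is thinnest. The classification you invoke (supercyclic operators on $\R^n$ exist only for $n\leq 2$, and on $\R^2$ are exactly the nonzero scalar multiples of irrational rotations) is correct, and each non-dense $p(T)(X)$ indeed yields either a real eigenvalue or a conjugate pair $\{z_0,\overline{z_0}\}$ of $T^*$ on the complexified dual. What remains is the uniqueness of this pair across different polynomials $p$. Your pointer to ``the complexification of $T^*$'' is the right idea: two distinct conjugate pairs would give four independent complex eigenfunctionals, and the map to $\C^2$ built from one functional out of each pair leads to the same nowhere-dense-union-of-lines contradiction as in the complex case (the real supercyclic orbit, complexified, still has dense image). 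Spelling this step out carefully is the only thing separating your outline from a complete proof.
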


\begin{corollary} \label{general1} Let $T$ be a hypercyclic
continuous linear operator on a topological vector space $X$ and $g$
be a generator of a compact topological group $G$. Then
$\{(T^nx,g^n):n\in\Z_+\}$ is dense in $X\times G$ for any $x\in
\uu(T)$. In other words, $\uu(T\oplus M_g)=\uu(T)\times G$.
\end{corollary}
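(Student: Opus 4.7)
The plan is to deduce Corollary~\ref{general1} from Theorem~\ref{general} by producing an appropriate $T$-invariant subset $Y\subseteq\uu(T)$ satisfying the required connectedness hypotheses. Since no continuous linear operator on a nonzero finite-dimensional topological vector space is hypercyclic, $X$ is either trivial or infinite-dimensional; the trivial case being vacuous, I assume $X$ is infinite-dimensional (which in particular makes $X$ free of isolated points, so Lemma~\ref{ll1} applies).

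Fix $x\in\uu(T)$ and set $V=\spann\{T^nx:n\in\Z_+\}$. The first substantive step is to show $Y:=V\setminus\{0\}\subseteq\uu(T)$. For any nonzero polynomial $p$, continuity of $p(T)$ and density of $O(T,x)$ give $\overline{p(T)(O(T,x))}\supseteq p(T)(\overline{O(T,x)})=p(T)(X)$; combined with density of $p(T)(X)$ in $X$ supplied by Theorem~W, this yields density of $O(T,p(T)x)=p(T)(O(T,x))$ in $X$, so $p(T)x\in\uu(T)$. Since $0\notin\uu(T)$, we also obtain $p(T)x\ne 0$ for every nonzero $p$, yielding both $Y\subseteq\uu(T)$ and infinite-dimensionality of $V$.

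$T$-invariance $T(Y)\subseteq Y$ then follows from $T(V)\subseteq V$ and Lemma~\ref{ll1}: $T(Y)\subseteq T(V)\cap T(\uu(T))\subseteq V\cap\uu(T)=V\setminus\{0\}=Y$. The subspace $V$ with its induced topology is itself a topological vector space of infinite (hence $\ge 3$) real dimension, so Lemma~\ref{conn2} yields that $Y$ is path connected, locally path connected and simply connected. Theorem~\ref{general} now applies to $(X,T,Y,g,G)$ and delivers density of $\{(T^nx,g^n):n\in\Z_+\}$ in $X\times G$; since $x\in\uu(T)$ was arbitrary, this is the first assertion of the corollary.

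For the identity $\uu(T\oplus M_g)=\uu(T)\times G$, the inclusion ``$\subseteq$'' is immediate, as the projection onto $X$ of a dense orbit in $X\times G$ is itself dense. Conversely, given $(y,h)\in\uu(T)\times G$, the $T\oplus M_g$-orbit of $(y,h)$ equals $\{(T^ny,g^nh)\}$, which is the image of the already-known dense set $\{(T^ny,g^n)\}$ under the homeomorphism $(u,k)\mapsto(u,kh)$ of $X\times G$, hence is itself dense. I expect the main obstacle to be the Bourdon-type verification $V\setminus\{0\}\subseteq\uu(T)$, where Theorem~W enters essentially: plain hypercyclicity of $T$ by itself says nothing about $p(T)x$ for nonconstant polynomials $p$.
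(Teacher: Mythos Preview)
Your proof is correct and follows essentially the same route as the paper: construct $Y=X_0\setminus\{0\}$ with $X_0=\spann\{T^nx:n\in\Z_+\}$, use Theorem~W to get $Y\subseteq\uu(T)$ and infinite-dimensionality of $X_0$, invoke Lemma~\ref{conn2} for the connectedness hypotheses, and apply Theorem~\ref{general}. The only cosmetic differences are that you justify $T(Y)\subseteq Y$ via Lemma~\ref{ll1} rather than via injectivity of $T\bigr|_{X_0}$, and you spell out the equality $\uu(T\oplus M_g)=\uu(T)\times G$, which the paper leaves as an immediate reformulation.
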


\begin{proof} Let $\pp=\K[z]$ be the space of polynomials on
one variable. By Theorem~W, $p(T)$ has dense range for any $p\in
\pp\setminus\{0\}$. Thus $O(T,p(T)x)=p(T)(O(T,x))$ is dense for any
$x\in \uu(T)$. It follows that the set $\uu(T)$ is invariant under
$p(T)$ for any $p\in\pp\setminus\{0\}$. Let $x\in \uu(T)$ and
$X_0=\{p(T)x:p\in\pp\}$. Since each $p(T)x$ with non-zero $p$
belongs to $\uu(T)$, we see that the linear map $p\mapsto p(T)x$
from $\pp$ to $X_0$ is one-to-one and onto and the restriction of
$T$ to the invariant subspace $X_0$ is injective. According to
Lemma~\ref{conn2}, $Y=X_0\setminus \{0\}$ is path connected, locally
path connected and simply connected. Since $Y\subseteq \uu(T)$ and
$T(Y)\subseteq Y$, from Theorem~\ref{general} it follows that
$\{(T^nx,g^n):n\in\Z_+\}$ is dense in $X\times G$.
\end{proof}

Similar property can be proven for supercyclic operators. First, we
observe that supercyclicity of an injective operator $T\in L(X)$ can
be interpreted as universality of the induced map $T_P$,
$T_P(\langle x\rangle)=\langle Tx\rangle$ on the projective space
$\PP X$. Namely, $x\in X$ is supercyclic for $T$ if and only if
$\langle x\rangle$ is universal for $T_P$.

\begin{corollary} \label{gege1} Let $X$ be an infinite dimensional
complex topological vector space and $T\in L(X)$ be supercyclic.
Then for any supercyclic vector $x$ for $T$ and any generator $g$ of
a compact topological group $G$, $\{(zT^nx,g^n):n\in\Z_+,\ z\in\C\}$
is dense in $X\times G$.
\end{corollary}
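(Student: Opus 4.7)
The plan is to imitate the proof of Corollary~\ref{general1} on the projective space $\PP X$ in place of $X$, using the induced map on lines. The observation made just before the statement---that $x$ is supercyclic for $T$ iff $\langle x\rangle$ is universal for the induced map on $\PP X$---reduces the desired density of $\{(zT^nx,g^n):z\in\C,\ n\in\Z_+\}$ in $X\times G$ to density of $\{(\langle T^nx\rangle,g^n):n\in\Z_+\}$ in $\PP X\times G$, together with the trivial case $y=0$, which is handled by taking $z=0$ and using that $g$ generates $G$.

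To carry this out, let $X_0=\{q(T)x:q\in\pp\}$ be the $T$-invariant linear span of the orbit of $x$. The map $q\mapsto q(T)x$ is injective: otherwise $T$ would satisfy a non-trivial polynomial equation, $X_0$ would be finite-dimensional, and then $\{sT^nx:s\in\C,\ n\in\Z_+\}$ could not be dense in the infinite-dimensional $X$. Hence $X_0$ is infinite-dimensional, $T|_{X_0}$ is injective, and it induces a continuous $T_0:\PP X_0\to\PP X_0$. By Theorem~W, either $\sigma_p(T^*)=\varnothing$ and $p(T)$ has dense range for every non-zero $p\in\pp$, or $\sigma_p(T^*)=\{z_0\}$ with $z_0\neq 0$ and $p(T)$ has dense range whenever $p(z_0)\neq 0$; a short preimage argument shows that $p(T)x$ is then a supercyclic vector for $T$.

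In the first case take $Y=\pi(X_0\setminus\{0\})\cong\PP X_0$; Lemma~\ref{conn3} applied to the complex TVS $X_0$ gives the three connectedness conditions. In the second case fix an eigenfunctional $f\in X^*$ with $T^*f=z_0f$, normalized so that $f(x)=1$---valid because $f(x)=0$ would give $f(q(T)x)=q(z_0)f(x)=0$ for every $q\in\pp$, so $f$ would vanish on the dense orbit and hence on $X$---and take $Y=\pi(\{y\in X_0:f(y)\neq 0\})$. The map $\langle y\rangle\mapsto y/f(y)$ identifies $Y$ homeomorphically with the affine hyperplane $A=\{y\in X_0:f(y)=1\}$, which is a translate of the closed linear subspace $X_0\cap\ker f$ of the TVS $X_0$ and hence homeomorphic to a topological vector space; Lemma~\ref{conn1} then supplies the required connectedness. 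In both cases one checks that $T_0(Y)\subseteq Y$ (using $z_0\neq 0$ so that $p(z_0)\neq 0$ implies $(zp)(z_0)\neq 0$), that $Y\subseteq\uu(T_0)$, and that $\pi(x)\in Y$.

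Theorem~\ref{general} applied to $T_0$, $g$ and $\pi(x)\in Y$ then yields that $\{(\langle T^nx\rangle,g^n):n\in\Z_+\}$ is dense in $\PP X_0\times G$. Since $X_0$ is dense in $X$ (being the linear span of the orbit of a cyclic vector), $\PP X_0$ is dense in $\PP X$, so the orbit is dense in $\PP X\times G$, and lifting to $X\times G$ as described in the first paragraph completes the proof. The main obstacle I anticipate is the topological bookkeeping: verifying that the subspace topology on $\pi(X_0\setminus\{0\})\subseteq\PP X$ coincides with the natural quotient topology on $\PP X_0$, so that Lemma~\ref{conn3} is legitimately applicable, and likewise that the slice map is a homeomorphism onto $A$---both reducing to the claim that $\C^*$-saturations of open sets in $X$ interact well with the projection $\pi:X\setminus\{0\}\to\PP X$.
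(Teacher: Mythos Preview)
Your proof is correct and follows essentially the same two-case split as the paper (according to whether $\sigma_p(T^*)$ is empty), invoking Lemma~\ref{conn3} in the first case and Lemma~\ref{conn1} in the second. In the second case the paper works directly on the affine hyperplane $Z=\{u\in X:f(u)=1\}$ with $T\bigr|_Z$ rather than passing through $\PP X_0$, but your projective formulation with the slice map $\langle y\rangle\mapsto y/f(y)$ is equivalent; the topological bookkeeping you flagged (coincidence of the subspace and quotient topologies on $\PP X_0$, and the slice map being a homeomorphism) is routine because $X_0$ is a $\C^*$-invariant linear subspace and $\pi$ is open---indeed the paper tacitly relies on the same fact when it applies Lemma~\ref{conn3} to $\PP X_0$.
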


\begin{proof} Let $x$ be a supercyclic vector for $T$.

{\bf Case} $\sigma_p(T^*)=\varnothing$. Consider the linear space
$X_0=\{p(T)x:p\in\pp\}$. Exactly as in the proof of
Corollary~\ref{general1}, we have that $X_0$ is infinite dimensional
and dense in $X$, the restriction of $T$ to the invariant subspace
$X_0$ is injective and each non-zero element of $X_0$ is a
supercyclic vector for $T$. It follows that each element of the
projective space $\PP X_0$ is universal for the induced map $T_P$.
By Lemma~\ref{conn3}, $\PP X_0$ is path connected, locally path
connected and simply connected. Theorem~\ref{general} implies that
$\{(T_P^n\langle x\rangle,g^n):n\in\Z_+\}$ is dense in $\PP
X_0\times G$ and therefore in $\PP X\times G$. The latter density
means that $\{(zT^nx,g^n):n\in\Z_+,\ z\in\C\}$ is dense in $X\times
G$.

{\bf Case} $\sigma_p(T^*)\neq\varnothing$. By Theorem~W, there
exists $w\in\C\setminus\{0\}$ such that $\sigma_p(T^*)=\{w\}$.
Multiplying $T$ by $w^{-1}$, we can, without loss of generality,
assume that $w=1$. Pick non-zero $f\in X^*$ such that $T^*f=f$.
Clearly $f(x)\neq 0$. Indeed, otherwise the orbit of $x$ lies in
$\ker f$ and $x$ is not even a cyclic vector for $T$. Multiplying
$f$ by a non-zero constant, if necessary, we may assume that
$f(x)=1$. Let $Z=\{x\in X:f(x)=1\}$. It is straightforward to verify
that $T(Z)\subseteq Z$ and $u\in Z$ is a supercyclic vector for $T$
if and only if $u$ is universal for the restriction $T\bigr|_Z$ of
$T$ to $Z$. Now, the set $Y=\{p(T)x:p\in\pp,\ p(1)=1\}$ is a subset
of $Z$, $x\in Y$ and $T(Y)\subseteq Y$. From Theorem~W it follows
that each $p(T)x$ with $p(1)\neq 0$ is a supercyclic vector for $T$.
Hence each element of $Y$ is universal for $T\bigr|_Z$. Moreover,
according to Lemma~\ref{conn1}, $Y$ being an affine subspace of $X$,
is path connected, locally path connected and simply connected. By
Theorem~\ref{general}, $\{(T^nx,g^n):n\in\Z_+\}$ is dense in
$Z\times G$. Hence $\{(zT^nx,g^n):n\in\Z_+,\ z\in\C\}$ is dense in
$X\times G$.
\end{proof}

It is worth noting that an analog of Corollary~\ref{gege1} fails for
supercyclic operators on real topological vector spaces. Indeed, let
$t\in\R\setminus\Q$, $A$ be a linear operator on $\R^2$ with the
matrix
$$
A=\begin{pmatrix}\cos 2\pi t&\sin 2\pi t\\
-\sin 2\pi t&\cos 2\pi t\end{pmatrix}
$$
and $B$ be any hypercyclic operator on a real topological vector
space $X$. It is easy to verify that $T=A\oplus B$ is a supercyclic
operator on $\R^2\times X$, $g=e^{2\pi it}$ is a generator of $\T$,
while $\{(zT^nx,g^n):n\in\Z_+,\ z\in\R\}$ is not dense in
$(\R^2\times X)\times \T$ for any $x\in\R^2\times X$. On the other
hand if we impose the additional condition that $p(T)$ has dense
range for any non-zero polynomial $p$, the result extends to the
real case. One has to use the fact that a vector is $\R$-supercyclic
if and only if it is $\R_+$-supercyclic \cite{bermud} and apply
Theorem~\ref{general} exactly as in the first case of the above
proof with $\PP X_0$ replaced by $\PP_+ X_0$ and $T_P$ by
$T_{P_+}[x]=[Tx]$. Of course, one has to use Lemma~\ref{conn4}
instead of Lemma~\ref{conn3} to ensure the required connectedness.
This leads to the following corollary.

\begin{corollary} \label{gege2} Let $X$ be a real topological vector
space and $T\in L(X)$ be a supercyclic operator such that $p(T)$ has
dense range for any non-zero $p\in\pp$. Then for any supercyclic
vector $x$ for $T$ and any generator $g$ of a compact topological
group $G$, $\{(sT^nx,g^n):n\in\Z_+,\ s>0\}$ is dense in $X\times G$.
\end{corollary}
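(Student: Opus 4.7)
The plan is exactly the one sketched by the author in the paragraph preceding the statement: mimic the first case ($\sigma_p(T^*)=\varnothing$) of the proof of Corollary~\ref{gege1}, replacing the projective space $\PP X_0$ by the ray space $\PP_+X_0$, the induced map $T_P$ by $T_{P_+}[y]=[Ty]$, and Lemma~\ref{conn3} by Lemma~\ref{conn4}. The key translation tool is the result of \cite{bermud}: over $\R$ a vector is $\R$-supercyclic if and only if it is $\R_+$-supercyclic. This lets one identify supercyclic vectors for $T$ with points of $\PP_+X$ whose $T_{P_+}$-orbit is dense.

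First I would form $X_0=\{p(T)x:p\in\pp\}$. The dense-range hypothesis forces $p(T)x\neq 0$ whenever $p\neq 0$ (otherwise the orbit of $x$ would lie in the proper closed subspace $\ker p(T)$, precluding even cyclicity), so $p\mapsto p(T)x$ is a linear bijection $\pp\to X_0$. Thus $X_0$ is an infinite-dimensional, $T$-invariant subspace on which $T$ acts injectively, and $T_{P_+}$ is accordingly well defined on $\PP_+X_0$. Since each $p(T)$ with $p\neq 0$ is continuous with dense range, it sends dense subsets of $X$ to dense subsets; applied to the dense (by Bermúdez) set $\{sT^nx:s>0,\,n\in\Z_+\}$, this shows that every non-zero element of $X_0$ is $\R_+$-supercyclic and hence supercyclic for $T$. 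Consequently every point of $\PP_+X_0$ is universal for $T_{P_+}\bigr|_{\PP_+X_0}$.

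By Lemma~\ref{conn4} the space $\PP_+X_0$ is path connected, locally path connected and simply connected. Theorem~\ref{general} applied to $T_{P_+}$ on $\PP_+X_0$ with $Y=\PP_+X_0$ now yields density of $\{(T_{P_+}^n[x],g^n):n\in\Z_+\}$ in $\PP_+X_0\times G$, hence in $\PP_+X\times G$ (as $\PP_+X_0$ is dense in $\PP_+X$). The final step is a routine pullback: given any non-empty open $U\times V\subseteq X\times G$, the set $U\setminus\{0\}$ is non-empty and $\pi_+$ is open, so $\pi_+(U\setminus\{0\})\times V$ is a non-empty open subset of $\PP_+X\times G$ that must meet the orbit, producing $s>0$ and $n\in\Z_+$ with $sT^nx\in U$ and $g^n\in V$. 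The only conceptually new ingredient beyond the template of Corollary~\ref{gege1} is the Bermúdez equivalence; after that, the argument is a line-by-line transcription of the complex case into the ray-space setting.
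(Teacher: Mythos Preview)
Your proposal is correct and follows precisely the route the paper itself sketches in the paragraph preceding Corollary~\ref{gege2}: transplant the first case of the proof of Corollary~\ref{gege1} to the ray space $\PP_+X_0$, use the Berm\'udez--Bonilla--Peris equivalence of $\R$-supercyclicity and $\R_+$-supercyclicity to identify supercyclic vectors with universal points for $T_{P_+}$, invoke Lemma~\ref{conn4} in place of Lemma~\ref{conn3}, and apply Theorem~\ref{general}. The details you supply (injectivity of $p\mapsto p(T)x$, invariance and infinite-dimensionality of $X_0$, the open-map pullback from $\PP_+X\times G$ to $X\times G$) are exactly the verifications needed to make that sketch rigorous.
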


\subsection{The Ansari theorem}

Applying Corollary~\ref{general1} in the case when $g$ is a
generator of an $n$-element cyclic group $G$, carrying the discrete
topology, we immediately obtain the following statement, which is
the Ansari theorem \cite{ansa} on hypercyclicity of powers of
hypercyclic operators.

\begin{corollary} \label{general2} Let $X$ be a topological vector
space, $T\in L(X)$ and $n\in\N$. Then $\uu(T^n)=\uu(T)$. In
particular $T^n$ is hypercyclic if and only if $T$ is hypercyclic.
\end{corollary}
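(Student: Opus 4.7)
The plan is to read off Corollary~\ref{general2} as a direct specialization of Corollary~\ref{general1}, choosing for $G$ the finite cyclic group $\Z/n\Z$ endowed with the discrete topology. The inclusion $\uu(T^n)\subseteq\uu(T)$ is trivial: if $\{(T^n)^m x : m\in\Z_+\}$ is dense in $X$, then so is the larger set $O(T,x)$, so $x\in\uu(T)$. Only the reverse inclusion $\uu(T)\subseteq\uu(T^n)$ carries content, and for this one may assume $\uu(T)\neq\varnothing$, since otherwise the claimed equality is vacuous.

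Under that assumption $T$ is hypercyclic, so Corollary~\ref{general1} applies. I would take $G=\Z/n\Z$ with its discrete topology; it is compact because it is finite, and $g=1+n\Z$ generates it. Then for any $x\in\uu(T)$, Corollary~\ref{general1} asserts that
\[
\{(T^k x,\, k+n\Z):k\in\Z_+\}\ \text{is dense in}\ X\times \Z/n\Z.
\]

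Since $\Z/n\Z$ is discrete, the slice $X\times\{0+n\Z\}$ is open in $X\times\Z/n\Z$, so density forces $\{T^k x:k\equiv 0\ (\mathrm{mod}\ n)\}=\{(T^n)^m x:m\in\Z_+\}$ to be dense in $X$. Hence $x\in\uu(T^n)$, giving the missing inclusion, and the ``in particular'' clause about hypercyclicity is immediate because hypercyclicity is by definition the nonemptiness of $\uu$.

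I do not anticipate any real obstacle here: all the analytic and topological work has already been absorbed into Theorem~\ref{general} and Corollary~\ref{general1}, and the only point requiring a moment of thought is the observation that a finite cyclic group, discretely topologized, is a perfectly valid compact topological group whose generators in the algebraic sense are exactly the topological generators needed to invoke Corollary~\ref{general1}.
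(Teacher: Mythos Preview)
Your proof is correct and is exactly the approach the paper takes: it states that Corollary~\ref{general2} follows by applying Corollary~\ref{general1} with $g$ a generator of the $n$-element cyclic group carrying the discrete topology. You have simply spelled out the details the paper leaves implicit.
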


The supercyclicity version of the Ansari theorem for operators on a
complex topological vector space follows similarly from
Corollary~\ref{gege1}. In order to incorporate the real case, we
need the following non-linear analog of the Ansari theorem. The
advantage compared to the direct application of
Theorem~\ref{general} is that in the case of finite group $G$, one
can significantly relax the topological assumptions on the set of
universal elements.

\begin{proposition}\label{l1} Let $X$ be a topological space with no
isolated points, $n\in\N$ and $T:X\to X$ be a continuous map such
that $\uu(T)$ is connected. Then $\uu(T^n)=\uu(T)$.
\end{proposition}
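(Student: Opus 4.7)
The plan is to reuse the algebraic machinery of Lemma~\ref{l2} but take $G$ finite, so that the difficult winding-number step of Lemma~\ref{map} is replaced by the trivial observation that a continuous map from a connected space to a finite discrete space is constant. The inclusion $\uu(T^n)\subseteq\uu(T)$ is immediate from $O(T^n,x)\subseteq O(T,x)$, so the task is to show $\uu(T)\subseteq\uu(T^n)$.

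Fix $x\in\uu=\uu(T)$ and apply Lemma~\ref{l2} with the compact abelian group $G=\Z/n\Z$ carrying the discrete topology and the generator $g=1+n\Z$. This yields the closed subgroup $H=F_{x,x}\leq G$ and the separately continuous function $f:\uu\times\uu\to G/H$ of (a$4'$) with $f(x,x)=1_{G/H}$ and $f(x,Ty)=gH\cdot f(x,y)$ for all $y\in\uu$ (invoking $T(\uu)\subseteq\uu$ from Lemma~\ref{ll1}). I would then form $\theta:\uu\to G/H$, $\theta(y)=f(x,y)$. Since $G/H$ is finite and discrete while $\uu$ is connected by hypothesis, the continuous map $\theta$ must be constant. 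From $\theta(x)=1_{G/H}$ and $\theta(Tx)=gH$ this forces $gH=1_{G/H}$, that is $g\in H$; as $g$ generates $G$, we obtain $H=G$.

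It then follows from Lemma~\ref{l2}(a3) that for every $y\in\uu$ the set $F_{x,y}$ is a coset of $H=G$, hence equals $G$, so $(y,0)\in N_x$. To extend this from $\uu$ to $X$, I note that Lemma~\ref{ll1} gives $O(T,x)\subseteq\uu$, and $O(T,x)$ is dense in $X$ (which has no isolated points), so $\uu$ itself is dense in $X$. Given any non-empty open $U\subseteq X$, pick $y\in U\cap\uu$; the open neighborhood $U\times\{0\}$ of $(y,0)$ in $X\times G$ meets $\{(T^kx,k+n\Z):k\in\Z_+\}$, forcing some $k\in n\Z_+$ with $T^kx\in U$. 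Thus $\{T^{nk}x:k\in\Z_+\}$ is dense in $X$ and $x\in\uu(T^n)$.

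The only conceptual point worth flagging is \emph{why} mere connectedness of $\uu$ suffices where Theorem~\ref{general} required path connected, locally path connected, and simply connected: when the phase group is finite, the intricate homotopy argument of Lemma~\ref{map} collapses to the trivial statement that continuous maps from a connected space to a discrete space are constant. Beyond this observation the proof is pure bookkeeping with the coset invariants produced by Lemma~\ref{l2}.
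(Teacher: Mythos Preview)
Your proof is correct and follows essentially the same approach as the paper's own proof: apply Lemma~\ref{l2} with $G=\Z/n\Z$, use connectedness of $\uu(T)$ together with finiteness (hence discreteness) of $G/H$ to force $H=G$, and then use density of $\uu(T)$ in $X$ to conclude that $O(T^n,x)$ is dense. The paper phrases the key step as ``$\psi$ is onto a finite discrete space from a connected domain, hence $G/H$ is trivial,'' whereas you phrase it as ``$\theta$ is constant, so $gH=1_{G/H}$,'' but these are the same observation.
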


\begin{proof} Let $g$ be a generator of the cyclic group $G$ of
order $n$ (carrying the discrete topology) and $x\in\uu(T)$. By
Lemma~\ref{l2} the set $H=F_{x,x}$ defined in \ref{nx} is a subgroup
of $G$, each $F_{x,y}$ for $y\in\uu(T)$ is a coset of $H$, the map
$\psi:\uu(T)\to G/H$, $\psi(y)=F_{x,y}$ is continuous and
$\psi(T^kx)=g^kH$. Since $g$ is a generator of $G$, it follows that
$\psi$ is onto. Since $G/H$ is finite, $\psi$  is continuous and
$\uu(T)$ is connected, it follows that $G/H$ is trivial. That is,
$H=G$. Hence $F_{x,y}=H$ for any $y\in\uu(T)$ and therefore
$\uu(T)\times G$ is contained in the closure of the orbit $O(T\oplus
M_g,(x,1))$. From Lemma~\ref{ll1} we see that $\uu(T)$ contains
$O(T,x)$ and therefore is dense in $X$. Hence $(x,1)$ is universal
for $T\oplus M_g$. Since $G$ carries the discrete topology,
$\{T^kx:g^k=1\}$ is dense in $X$. The latter set is exactly
$O(T^n,x)$. Thus $x\in \uu(T^n)$. That is, $\uu(T)\subseteq
\uu(T^n)$. The opposite inclusion is obvious.
\end{proof}

It is worth noting that the original proof of Ansari \cite{ansa} can
also be adapted to prove the last proposition. Our point was to show
that the result follows also from Lemma~\ref{l2}.

\begin{corollary} \label{general22} Let $X$ be a topological vector
space, $T\in L(X)$ and $n\in\N$. Then the sets of supercyclic
vectors for $T^n$ and $T$ coincide. In particular $T^n$ is
supercyclic if and only if $T$ is supercyclic.
\end{corollary}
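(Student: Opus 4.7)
The inclusion $\uu(T^n)\subseteq\uu(T)$ is immediate, since the dense orbit $\{zT^{kn}x:k\in\Z_+,\ z\in\K\}$ sits inside $\{zT^jx:j\in\Z_+,\ z\in\K\}$, forcing the latter to be dense as well.

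For the converse, my plan is to reinterpret supercyclicity as universality of a single induced map on a projective space and then apply Proposition~\ref{l1}. Let $x$ be any supercyclic vector for $T$ and set $X_0=\spann O(T,x)$, which, as in the proof of Corollary~\ref{general1}, is a $T$-invariant infinite-dimensional dense subspace of $X$ on which $T$ acts injectively. On the projective space $\PP X_0$ of $\K$-lines in $X_0$, the induced map $T_P[v]=[Tv]$ is well defined and continuous, and a routine check shows that a nonzero $v\in X_0$ is supercyclic for $T$ if and only if $[v]$ is universal for $T_P$ (scalars disappear under projectivization, and density of $\{sT^kv:s\in\K,\ k\in\Z_+\}$ in $X$ is equivalent to density of $\{[T^kv]:k\in\Z_+\}$ in $\PP X$). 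The aim is then to apply Proposition~\ref{l1} to $T_P$: its conclusion $\uu(T_P^n)=\uu(T_P)$ is exactly the equality of supercyclic vector sets for $T^n$ and $T$ at the level of $X_0$, and since $x\in X_0$ was arbitrary this gives $\uu(T)\subseteq\uu(T^n)$.

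The hypotheses of Proposition~\ref{l1} are that $\PP X_0$ has no isolated points and that $\uu(T_P)$ is connected. The first is clear, because $\PP X_0$ is path connected (by Lemma~\ref{conn3} in the complex case, and in the real case as a quotient of the path connected $X_0\setminus\{0\}$ provided by Lemma~\ref{conn2}) and infinite. The second, connectedness of $\uu(T_P)$, is the main obstacle. I would handle it through Theorem~W: it supplies $z_0\in\C\setminus\{0\}$ such that each $p(T)x$ with $p\in\pp$ and $p(z_0)\neq 0$ is again supercyclic for $T$. Consequently $\uu(T_P)\supseteq\PP V_0$, where $V_0=\{p(T)x:p\in\pp,\ p(z_0)\neq 0\}=X_0\setminus q(T)X_0$ and $q$ is the minimal $\K$-polynomial of $z_0$.

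The final step is to verify that $\PP V_0$ is path connected and dense in $\PP X_0$; then the sandwich $\PP V_0\subseteq\uu(T_P)\subseteq\PP X_0=\overline{\PP V_0}$ will force $\uu(T_P)$ to be connected. Density of $\PP V_0$ is immediate from $V_0\supseteq\{sT^kx:s\in\K\setminus\{0\},\ k\geq 0\}$, a set already dense in $X_0$ by supercyclicity of $x$. For path connectedness I would split cases on $q$: in the complex case $q(T)X_0$ has complex codimension one, so its image in $\PP X_0$ is a complex projective hyperplane whose complement is affine and thus path connected; in the real case with $z_0\notin\R$, $q$ has real degree two and $q(T)X_0$ has real codimension two in $X_0$, so $V_0$ itself is path connected; in the real case with $z_0\in\R$, $V_0$ decomposes into two real half-spaces $V_0^\pm$ exchanged by $v\mapsto -v$, and the identification $[v]=[-v]$ in $\PP X_0$ collapses them into a single path connected piece. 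Once the connectedness of $\uu(T_P)$ is in hand, Proposition~\ref{l1} applies and the proof concludes.
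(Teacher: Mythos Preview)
Your argument is correct and follows essentially the same route as the paper: reduce to the induced map $T_P$ on $\PP X_0$, use Theorem~W to exhibit a connected dense subset $Y=\{\langle p(T)x\rangle:p(z_0)\neq0\}$ of $\uu(T_P)$, and invoke Proposition~\ref{l1}. The paper compresses your case analysis into the single phrase ``one can easily see that $Y$ is connected and dense in $\PP X_0$''; your explicit treatment of the three cases (complex, real with $z_0\in\R$, real with $z_0\notin\R$) is a welcome elaboration. The only omission is that you tacitly assume $X$ is infinite dimensional when asserting $X_0$ is; the paper disposes of the finite-dimensional case in one sentence at the start (supercyclic operators exist only when $\dim_\R X\le 2$, where the claim is elementary).
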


\begin{proof} It is well-known that if $X$ is finite dimensional,
then supercyclic operators do exist on $X$ if and only if the real
dimension of $X$ does not exceed $2$. In this case we can easily see
that the required  property is satisfied. Thus we can assume that
$X$ is infinite dimensional. Consider the space
$X_0=\{p(T)x:p\in\pp\}$. Since $X_0$ is dense in $X$, it is infinite
dimensional. Moreover, the restriction of $T$ to $X_0$ is injective.
By Theorem~W, there exists $z_0\in\C$ such that the set
$\{p(T)x:p(z_0)\neq 0\}$ consists of supercyclic vectors for $T$.
Thus the set $Y=\{\langle p(T)x\rangle: p(z_0)\neq 0\}$ consists of
universal elements of the induced map $T_P:\PP X_0\to \PP X_0$. One
can easily see that $Y$ is connected and dense in $\PP X_0$. Hence
$\uu(T_P)$ is connected. By Proposition~\ref{l1}, $\langle
x\rangle\in \uu(T_P^n)$. It follows that $x$ is a supercyclic vector
for $T^n$.
\end{proof}

\subsection{The Le\'on--M\"uller theorem}

Let $X$ be a topological vector space. Recall that the pointwise
convergence topology on $L(X)$ is called the {\it strong operator
topology}. We say that an operator $S\in L(X)$ is {\it a generalized
rotation} if the strong operator topology closure $G$ of
$\{S^n:n\in\Z_+\}$ in $L(X)$ is a compact {\bf subgroup} of the
semigroup $L(X)$ and the map $(x,A)\mapsto Ax$ from $X\times G$ to
$X$ is continuous.

\begin{proposition}\label{lm00} Let $X$ be a topological vector
space, $T\in L(X)$ and $S\in L(X)$ be a generalized rotation. Then
$T$ is hypercyclic if and only if $\{S^nT^n:n\in\Z_+\}$ is universal
and $\uu(T)\subseteq\uu(\{S^nT^n:n\in\Z_+\})$. If additionally
$TS=ST$, then $T$ is hypercyclic if and only if $ST$ is hypercyclic
and $\uu(T)=\uu(ST)$.
\end{proposition}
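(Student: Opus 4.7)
The plan is to transfer joint density in $X\times G$ (provided by Corollary~\ref{general1}) into density in $X$ through the evaluation map $E:X\times G\to X$, $E(y,A)=Ay$, where $G$ denotes the SOT-closure of $\{S^n:n\in\Z_+\}$, a compact subgroup of $L(X)$ with $S$ as a generator (by the definition of generalized rotation). Before anything else I would record that $E$ is continuous (by hypothesis), surjective (since $1_G\in G$ gives $E(y,1_G)=y$) and in fact closed, since $E=\pi_X\circ\Phi$ where $\Phi(y,A)=(Ay,A)$ is a self-homeomorphism of $X\times G$ with continuous inverse $(z,A)\mapsto(A^{-1}z,A)$, and $\pi_X$ is closed by Lemma~\ref{clos}.

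For the containment $\uu(T)\subseteq\uu(\{S^nT^n:n\in\Z_+\})$, which in particular implies the forward direction of the first iff, take $x\in\uu(T)$; by Corollary~\ref{general1} with generator $g=S$, the set $\{(T^nx,S^n):n\in\Z_+\}$ is dense in $X\times G$. For each non-empty open $U\subseteq X$, $E^{-1}(U)$ is non-empty open in $X\times G$ by surjectivity of $E$, so the joint orbit meets $E^{-1}(U)$, giving some $n$ with $S^nT^nx\in U$. Hence $\{S^nT^nx\}$ is dense in $X$, i.e., $x\in\uu(\{S^nT^n\})$.

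For the backward direction of the first iff in the commutative case, and simultaneously for $\uu(ST)\subseteq\uu(T)$ in the second statement, let $x\in\uu(\{S^nT^n\})$. Under $TS=ST$ one has $S^nT^n=(ST)^n$, so $x$ is hypercyclic for $ST$. I would then observe that $S^{-1}$ is itself a generalized rotation with the same group $G$: indeed, $\overline{\{S^{-n}:n\in\Z_+\}}$ is a closed subsemigroup of $G$, hence a subgroup by Lemma~\ref{sub}, and this subgroup contains $S^{-1}$, hence $S=(S^{-1})^{-1}$, so it equals $G$. Applying Corollary~\ref{general1} to the hypercyclic operator $ST$ with generator $S^{-1}$ of $G$ gives density of $\{((ST)^nx,S^{-n}):n\in\Z_+\}$ in $X\times G$; applying the evaluation map yields density of $\{S^{-n}(ST)^nx\}=\{T^nx\}$ (the equality by commutativity), so $x\in\uu(T)$.

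The hardest part will be the backward direction of the first iff without the assumption $TS=ST$: then $\{S^nT^n\}$ is no longer the iterate sequence of a single operator, and the ``swap $S$ for $S^{-1}$'' trick fails. In this case I would set $N=\overline{\{(T^nx,S^n):n\in\Z_+\}}\subseteq X\times G$; closedness of $\pi_X$ and $E$ then yields $\pi_X(N)=\overline{\{T^nx\}}$ and $E(N)=X$, and the task is to upgrade $E(N)=X$ to $\pi_X(N)=X$. I would invoke the coset machinery of Lemma~\ref{L2} applied to a suitably enlarged semigroup $\Lambda\subseteq C(X)$ together with a homomorphism $\phi:\Lambda\to G$ tracking the rotation component, and then adapt the argument of Theorem~\ref{general} to force the resulting closed subgroup of $G$ to equal $G$; this last step is the principal technical hurdle.
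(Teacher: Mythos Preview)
Your argument for the inclusion $\uu(T)\subseteq\uu(\{S^nT^n:n\in\Z_+\})$ and for the commuting case $\uu(T)=\uu(ST)$ is essentially the paper's proof: apply Corollary~\ref{general1} with $g=S$ to get density of $\{(T^nx,S^n)\}$ in $X\times G$, push forward through the continuous surjection $(y,A)\mapsto Ay$; then, when $TS=ST$, observe that $S^{-1}$ is again a generalized rotation and re-apply the already-proved inclusion to the pair $(ST,S^{-1})$ to obtain $\uu(ST)\subseteq\uu(S^{-1}ST)=\uu(T)$. (Your remarks about closedness of $E$ are correct but not needed here; continuity plus surjectivity suffices, and that is all the paper uses.)

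The ``hardest part'' you identify---the backward direction of the first biconditional in the absence of commutativity---is simply not addressed in the paper. The paper's proof establishes only the inclusion $\uu(T)\subseteq\uu(\{S^nT^n\})$ (whence the forward implication) and, in the commuting case, the equality $\uu(T)=\uu(ST)$; the ``if and only if'' in the first sentence of the proposition is a slip in the statement, and no argument for that converse is offered (nor is one needed for any of the subsequent applications, all of which use only the forward inclusion or the commuting case). So the machinery you sketch via Lemma~\ref{L2} is aimed at something the paper does not in fact prove, and you can drop it.
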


\begin{proof}Let $G$ be the strong operator topology closure of
$\{S^n:n\in\Z_+\}$ in $L(X)$. Since $S$ is a generalized rotation,
$G$ is compact and the map $\Phi:X\times G\to X$, $\Phi(x,A)=Ax$
from $X\times G$ to $X$ is continuous. Let $x\in \uu(T)$. By
Corollary~\ref{general1}, the set $\Omega=\{(T^nx,S^n):n\in\Z_+\}$
is dense in $X\times G$. Since $\Phi$ is onto and continuous
$\Phi(\Omega)=\{S^nT^nx:n\in\Z_+\}$ is dense in $X$. Hence $x\in
\uu(\{S^nT^n:n\in\Z_+\})$. Thus $\uu(T)\subseteq
\uu(\{S^nT^n:n\in\Z_+\})$.

Assume now that $TS=ST$. Then $S^nT^n=(ST)^n$ for each $n\in\Z_+$.
Hence $\uu(T)\subseteq \uu(ST)$. Clearly $S$ is invertible and
$S^{-1}$ is also a generalized rotation. Hence $\uu(ST)\subseteq
\uu(S^{-1}(ST))=\uu(T)$. Thus $\uu(T)=\uu(ST)$.
\end{proof}

If $X$ is a complex topological vector space and $z\in\T$, then
obviously, the rotation operator $zI$ is a generalized rotation.
Applying Proposition~\ref{lm00} with $S=zI$, we immediately obtain
the Le\'on-Saavedra and M\"uller theorem \cite{muller} on
hypercyclicity of rotations of hypercyclic operators.

\begin{corollary} \label{general3} If $T$ is a hypercyclic continuous linear
operator on a complex topological vector space $X$ and $z\in\T$,
then $zT$ is hypercyclic and $\uu(zT)=\uu(T)$. \end{corollary}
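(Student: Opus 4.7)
The plan is to deduce the statement from Proposition~\ref{lm00} applied with $S = zI$. Since $zI$ commutes with every operator in $L(X)$, once $zI$ is known to be a generalized rotation, the second half of Proposition~\ref{lm00} immediately yields both the hypercyclicity equivalence and the equality $\uu(T) = \uu(zT)$. So the entire task reduces to verifying that $S = zI$ satisfies the definition of a generalized rotation.

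First I would identify the strong operator topology closure $G$ of $\{S^n : n \in \Z_+\} = \{z^n I : n \in \Z_+\}$. Fixing any nonzero $x_0 \in X$, the uniqueness of the Hausdorff topological vector space topology on the one-dimensional subspace $\langle x_0\rangle$ makes $w \mapsto wx_0$ a homeomorphism from $\C$ onto $\langle x_0\rangle$. Hence a net $\{w_\alpha I\}$ converges strongly to an operator $A \in L(X)$ if and only if $w_\alpha \to w$ in $\C$ for some scalar $w$ and $A = wI$. It follows that $G = \{wI : w \in K\}$ where $K = \overline{\{z^n : n \in \Z_+\}} \subseteq \T$, and the bijection $K \ni w \mapsto wI \in G$ is a homeomorphism for the strong operator topology.

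Next I would observe that $K$, being a closed subsemigroup of the compact topological group $\T$, is a subgroup of $\T$ by Lemma~\ref{sub}, so $G$ is a compact subgroup of the multiplicative semigroup $L(X)$. The action $\Phi : X \times G \to X$, $\Phi(x, wI) = wx$, corresponds under the above homeomorphism to the restriction of the scalar multiplication $\C \times X \to X$ to $K \times X$, and this is continuous by definition of a topological vector space. Thus $zI$ is a generalized rotation, and Proposition~\ref{lm00} applied to the commuting pair $T$ and $S = zI$ finishes the proof. I do not anticipate any obstacle here, as the argument reduces to two standard facts: Lemma~\ref{sub} on closed subsemigroups of compact groups, and the continuity of scalar multiplication.
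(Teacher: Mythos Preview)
Your proposal is correct and follows exactly the paper's approach: the paper simply asserts that $zI$ is ``obviously'' a generalized rotation and then applies Proposition~\ref{lm00} with $S=zI$. Your argument merely spells out this obvious claim in detail (identifying the strong closure with $\{wI:w\in K\}$, invoking Lemma~\ref{sub}, and checking continuity of the action), so there is nothing to add.
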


If a $X_1,\dots,X_n$ are complex topological vector spaces,
$X=X_1\times{\dots}\times X_n$ and $z_1,\dots,z_n\in\T$, then the
operator $S\in L(X)$, $S=z_1I\oplus{\dots}\oplus z_nI$ is a
generalized rotation. Applying Proposition~\ref{lm00} with this $S$,
we get the following slight generalization of the Le\'on--M\"uller
theorem.

\begin{proposition}\label{lm0} Let $T_j\in L(X_j)$ for $1\leq j\leq n$ be such
that $T=T_1\oplus{\dots}\oplus T_n$ is hypercyclic. Then for any
$z=(z_1,\dots,z_n)\in \T^n$, the operator
$T_z=z_1T_1\oplus{\dots}\oplus z_nT_n$ is also hypercyclic and
$\uu(T)=\uu(T_z)$.
\end{proposition}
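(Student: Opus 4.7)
The plan is to apply Proposition~\ref{lm00} directly with the specific operator $S=z_1I\oplus{\dots}\oplus z_nI$ acting on $X=X_1\times{\dots}\times X_n$. Observe that $ST=TS=z_1T_1\oplus{\dots}\oplus z_nT_n=T_z$, so once the hypotheses of Proposition~\ref{lm00} are verified the conclusion $\uu(T)=\uu(T_z)$ will be immediate.

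The one thing that requires checking is that $S$ is a generalized rotation in the sense defined just before Proposition~\ref{lm00}; that is the main (and essentially only) obstacle. To see this, note that the map $\iota:\T^n\to L(X)$ sending $(w_1,\dots,w_n)$ to $w_1I\oplus{\dots}\oplus w_nI$ is a homomorphism of groups, and is a homeomorphism onto its image when $L(X)$ carries the strong operator topology, because convergence $w^{(\alpha)}_j\to w_j$ in $\T$ implies $w^{(\alpha)}_j x_j\to w_jx_j$ for every $x_j\in X_j$, and conversely strong operator convergence of $\iota(w^{(\alpha)})$ to $\iota(w)$ forces coordinatewise convergence by testing on vectors with one non-zero coordinate. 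Thus the strong operator closure of $\{S^k:k\in\Z_+\}=\iota(\{(z_1^k,\dots,z_n^k):k\in\Z_+\})$ equals $\iota(K)$, where $K$ is the closure of the cyclic subsemigroup $\{(z_1^k,\dots,z_n^k):k\in\Z_+\}$ in the compact abelian group $\T^n$. By Lemma~\ref{sub}, $K$ is a compact subgroup of $\T^n$, hence $G=\iota(K)$ is a compact subgroup of $L(X)$.

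It remains to verify that the action map $\Phi:X\times G\to X$, $\Phi(x,A)=Ax$, is continuous. Under the identification $\iota$, $\Phi$ becomes the map
\[
((x_1,\dots,x_n),(w_1,\dots,w_n))\mapsto (w_1x_1,\dots,w_nx_n),
\]
which is continuous as the product of the continuous scalar multiplications $\K\times X_j\to X_j$ restricted to $\T\times X_j$. So $S$ is a generalized rotation.

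Finally, the equality $SS_j=S_jS$ for each coordinate makes $ST=TS$ obvious, so the second assertion of Proposition~\ref{lm00} applies: $T$ is hypercyclic if and only if $ST=T_z$ is hypercyclic, and $\uu(T)=\uu(ST)=\uu(T_z)$. This proves the proposition.
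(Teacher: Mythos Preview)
Your proof is correct and follows exactly the approach the paper takes: it asserts that $S=z_1I\oplus\dots\oplus z_nI$ is a generalized rotation and then applies Proposition~\ref{lm00} with $ST=TS=T_z$. You have simply supplied the details (via the embedding $\iota:\T^n\to L(X)$ and Lemma~\ref{sub}) that the paper leaves implicit when it declares $S$ to be a generalized rotation.
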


Similar proof with application of Corollary~\ref{gege1} instead of
Corollary~\ref{general1} gives the following supercyclic analog of
Proposition~\ref{lm00}.

\begin{proposition}\label{lm000} Let $X$ be a complex topological vector
space, $T\in L(X)$ and $S\in L(X)$ be a generalized rotation. Then
$T$ is supercyclic if and only if ${\cal F}=\{sS^nT^n:n\in\Z_+,\
s\in\C\}$ is universal and any supercyclic vector for $T$ is
universal for $\cal F$. If additionally $TS=ST$, then $T$ is
supercyclic if and only if $ST$ is supercyclic and the operators $T$
and $ST$ have the same sets of supercyclic vectors.
\end{proposition}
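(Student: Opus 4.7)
The plan is to mirror the proof of Proposition~\ref{lm00} almost verbatim, substituting Corollary~\ref{gege1} for Corollary~\ref{general1}.

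First I would let $G$ denote the strong operator topology closure of $\{S^n:n\in\Z_+\}$ inside $L(X)$. By the definition of a generalized rotation, $G$ is a compact topological group with $S$ as a generator, and the evaluation map $\Phi:X\times G\to X$, $\Phi(x,A)=Ax$, is continuous. Fix a supercyclic vector $x$ for $T$. Corollary~\ref{gege1} then gives that
$$
\Omega=\{(zT^nx,S^n):n\in\Z_+,\ z\in\C\}
$$
is dense in $X\times G$. Since $\Phi$ is continuous and maps $X\times G$ onto $X$, the image $\Phi(\Omega)=\{zS^nT^nx:n\in\Z_+,\ z\in\C\}$ is dense in $X$, so $x$ is universal for $\cal F$. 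This simultaneously shows that $\cal F$ is universal and that every supercyclic vector of $T$ is a universal element for $\cal F$.

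For the commuting case $TS=ST$, I would use the identity $S^nT^n=(ST)^n$, which turns $\cal F$ into $\{s(ST)^n:n\in\Z_+,\ s\in\C\}$, whose universal elements are precisely the supercyclic vectors of $ST$. The previous paragraph therefore shows that every supercyclic vector of $T$ is a supercyclic vector of $ST$. For the reverse inclusion I would apply the same reasoning to the generalized rotation $S^{-1}$ (inverses exist because $G$ is a compact group) and the operator $ST$, using $S^{-1}(ST)=T$.

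The only real obstacle is a small dimension gap: Corollary~\ref{gege1} is stated for infinite dimensional $X$, whereas complex supercyclic operators can also live on the one-dimensional space $X=\C$. I would dispose of that degenerate case by direct inspection, exactly as in the opening lines of the proof of Corollary~\ref{general22}; there the statement reduces to a triviality because every nonzero vector is supercyclic for every nonzero scalar operator on $\C$, and $G$ is simply the closure of a cyclic subgroup of $\C^\ast$ acting by multiplication.
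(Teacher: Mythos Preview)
Your proposal is correct and follows the paper's own approach exactly: the paper simply states that the proof of Proposition~\ref{lm000} is the same as that of Proposition~\ref{lm00} with Corollary~\ref{gege1} replacing Corollary~\ref{general1}, and that is precisely what you do. Your explicit treatment of the one-dimensional case (to cover the dimension hypothesis in Corollary~\ref{gege1}) is a small but legitimate addition that the paper leaves implicit.
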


Applying Proposition~\ref{lm000} with $S=z_1I\oplus{\dots}\oplus
z_nI$, we also get the following corollary.

\begin{proposition}\label{lm1} Let $X_j$ be complex topological vector
spaces and $T_j\in L(X_j)$ for $1\leq j\leq n$ be such that
$T=T_1\oplus{\dots}\oplus T_n$ is supercyclic. Then for any
$z=(z_1,\dots,z_n)\in \T^n$, the operator
$T_z=z_1T_1\oplus{\dots}\oplus z_nT_n$ is also supercyclic and the
sets of supercyclic vectors for $T$ and $T_z$ coincide.
\end{proposition}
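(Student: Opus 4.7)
Set $X=X_1\times\dots\times X_n$ and consider the operator
$$
S=z_1I\oplus\dots\oplus z_nI\in L(X).
$$
The plan is to verify that $S$ is a generalized rotation that commutes with $T$, and then to invoke Proposition~\ref{lm000}.

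First I would check that $S$ is a generalized rotation. For each $k\in\Z_+$ one has $S^k=z_1^kI\oplus\dots\oplus z_n^kI$, so the orbit $\{S^k:k\in\Z_+\}$ lies in the abelian subgroup $\Gamma=\{w_1I\oplus\dots\oplus w_nI:(w_1,\dots,w_n)\in\T^n\}$ of $L(X)$. The natural map $\T^n\to \Gamma$, $(w_1,\dots,w_n)\mapsto w_1I\oplus\dots\oplus w_nI$, is a continuous group homomorphism when $\Gamma$ is equipped with the strong operator topology; it is actually a homeomorphism onto its image since the strong operator topology separates such scalar block operators. Hence the strong operator topology closure $G$ of $\{S^k:k\in\Z_+\}$ is the image under this homeomorphism of the closure $H$ of $\{(z_1^k,\dots,z_n^k):k\in\Z_+\}$ in $\T^n$, which is a closed (hence compact) subgroup of $\T^n$. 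Therefore $G$ is a compact subgroup of $L(X)$. Continuity of the action $(x,A)\mapsto Ax$ from $X\times G$ to $X$ reduces, via the parametrization of $G$ by $H\subseteq\T^n$, to the joint continuity of $((x_1,\dots,x_n),(w_1,\dots,w_n))\mapsto(w_1x_1,\dots,w_nx_n)$, which follows from the joint continuity of scalar multiplication in each $X_j$.

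Next I would observe that $S$ commutes with $T$, since each scalar block $z_jI$ commutes with $T_j$. Applying the second part of Proposition~\ref{lm000} to $T$ and $S$, we conclude that $T$ is supercyclic if and only if $ST$ is supercyclic and that the sets of supercyclic vectors of $T$ and $ST$ coincide. Since $ST=z_1T_1\oplus\dots\oplus z_nT_n=T_z$, this is exactly the conclusion of the proposition.

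The only point that requires any care is the verification that $G$ is genuinely a compact subgroup of $L(X)$ in the strong operator topology rather than merely a compact semigroup; but this is automatic from the identification of $G$ with the closed subgroup $H$ of the compact group $\T^n$ described above, so no serious obstacle arises.
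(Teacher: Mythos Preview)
Your proof is correct and follows exactly the approach of the paper: the paper simply states that $S=z_1I\oplus\dots\oplus z_nI$ is a generalized rotation and then applies Proposition~\ref{lm000}, while you supply the details verifying that $S$ is indeed a generalized rotation commuting with $T$. Your justification that the strong operator closure $G$ is a compact subgroup via the identification with a closed subgroup of $\T^n$ is precisely the argument the paper leaves implicit.
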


{\bf Remark.} \ The same argument allows to replace finite direct
sums of operators in Propositions~\ref{lm0} and~\ref{lm1} by, for
instance, countable $c_0$ or $\ell_p$ ($1\leq p<\infty$) sums of
Banach spaces.

\subsection{Hypercyclicity of members of a continuous universal semigroup}

Recently, Conejero, M\"uller and Peris \cite{semi} have proven that
if $\{T_t\}_{t\geq 0}$ is a strongly continuous semigroup of
continuous linear operators acting on a complete metrizable
topological vector space $X$ and the family $\{T_t:t\in\R_+\}$ is
universal, then each $T_t$ with $t>0$ is hypercyclic. We show now
that this theorem is also a corollary of Theorem~\ref{general}.

\begin{proposition} \label{sem} Let $\{T_t\}_{t\geq 0}$ be semigroup of
continuous linear operators acting on a topological vector space
$X$. Assume also that the map $(t,x)\mapsto T_tx$ from $\R_+\times
X$ to $X$ is continuous. Then $\uu(T_t)=\uu(T_s)$ for any $t,s>0$.
That is, either $T_t$ for $t>0$ are all non-hypercyclic or $T_t$ for
$t>0$ are all hypercyclic and have the same set of hypercyclic
vectors.
\end{proposition}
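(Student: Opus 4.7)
My plan is to split the analysis by whether $t/s$ is rational. In the rational case, write $t/s=p/q$ with coprime $p,q\in\N$; the semigroup law yields $T_t^q=T_{qt}=T_{ps}=T_s^p$, and two applications of Ansari's theorem (Corollary~\ref{general2}) immediately give $\uu(T_t)=\uu(T_t^q)=\uu(T_s^p)=\uu(T_s)$.

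The irrational case is where Corollary~\ref{general1} does the main work. Set $r=t/s$, so that $g=e^{2\pi ir}$ generates the compact group $\T$. For any $x_0\in\uu(T_t)$, Corollary~\ref{general1} applied to $T_t$ and $g$ yields the density of $\{(T_{nt}x_0,e^{2\pi inr}):n\in\Z_+\}$ in $X\times\T$. To deduce $x_0\in\uu(T_s)$, I would fix an arbitrary nonempty open $U\subseteq X$ with a point $y_0\in U$; joint continuity of $(\tau,v)\mapsto T_\tau v$ at $(0,y_0)$, together with $T_0=I$, supplies some $\delta\in(0,s)$ and an open $V\ni y_0$ with $T_\tau v\in U$ whenever $(\tau,v)\in[0,\delta]\times V$. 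Now I would apply the density to the open product set $V\times W$, where $W=\{e^{2\pi ix}:x\in(1-\delta/s,1)\}$ is a one-sided open arc around $1\in\T$; this produces $n\geq 1$ with $T_{nt}x_0\in V$ and $\{nr\}\in(1-\delta/s,1)$. Setting $m=\lceil nr\rceil$, one computes $ms-nt=(1-\{nr\})s\in(0,\delta)$, so by the semigroup law and the choice of $V,\delta$,
\[T_{ms}x_0=T_{ms-nt}(T_{nt}x_0)\in U.\]
Hence the $T_s$-orbit of $x_0$ meets every nonempty open subset of $X$, giving $x_0\in\uu(T_s)$; swapping the roles of $s$ and $t$ yields the reverse inclusion.

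The main technical obstacle is that the semigroup is one-sided, so $T_{-\tau}$ is unavailable and one must arrange $ms-nt\geq 0$ rather than merely $|ms-nt|$ small. This is precisely why I would target the \emph{one-sided} arc $W$ (approaching $1\in\T$ from below only) and use the ceiling $m=\lceil nr\rceil$ rather than a nearest-integer rounding: this forces the approximation $m\approx nr$ from strictly above, producing a non-negative time difference that the joint continuity of the semigroup can then absorb into $U$.
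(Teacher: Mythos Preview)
Your proof is correct and follows essentially the same route as the paper: split into rational and irrational $t/s$, invoke Ansari (Corollary~\ref{general2}) in the rational case, and apply Corollary~\ref{general1} in the irrational case to get density of $\{(T_t^n x,g^n)\}$ in $X\times\T$.

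The only difference is in how the one-sidedness of the semigroup is handled in the irrational case. You target a one-sided arc near $1\in\T$ and use $m=\lceil nr\rceil$ to force $ms-nt\in(0,\delta)$, then appeal to joint continuity at time $0$. The paper instead targets an arc near $e^{-2\pi i s/t}$, so that the time difference $b_\alpha=tm_\alpha-sn_\alpha$ converges to $s$ (hence is eventually positive); this yields $T_{tm_\alpha}x\to T_s y$, and one then needs the extra observation that $T_s$ has dense range (because $T_s$ is hypercyclic) to conclude. Your version is slightly more direct, as it avoids this last step; the paper's version avoids needing continuity at the boundary point $t=0$. Both are clean ways around the same obstacle.
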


\begin{proof} Let $t,s>0$. We have to show that
$\uu(T_t)=\uu(T_s)$. Clearly, it is enough to demonstrate that
$\uu(T_s)\subseteq \uu(T_t)$ for any $s,t>0$. If $t/s\in \Q$, then
$t/s=m/n$ for some $m,n\in\N$. Then $T_t^n=T_s^m$ and according to
the Ansari theorem (Corollary~\ref{general2}), we have
$\uu(T_t)=\uu(T_t^n)=\uu(T_s^m)=\uu(T_s)$. It remains to consider
the case $t/s\notin\Q$. In this case $z=e^{2\pi is/t}$ is a
generator of $\T$. Let $x\in\uu(T_s)$ and $y\in X$. By
Corollary~\ref{general1}, the set $\{(T_{ns}x,z^n):n\in\Z_+\}$ is
dense in $X\times \T$. Hence, we can pick a net
$\{n_\alpha\}_{\alpha\in D}$ of non-negative integers such that
$T_{n_\alpha s}x\to y$ and $z^{n_\alpha}\to e^{-2\pi is}$. Since
$z^{n_\alpha}=e^{2\pi in_\alpha s/t}$ and $z^{n_\alpha}\to e^{-2\pi
is}$, we can pick a net $\{m_\alpha\}_{\alpha\in D}$ of non-negative
integers such that $s(n_\alpha+1)-tm_\alpha\to 0$. That is,
$tm_\alpha=sn_\alpha+b_\alpha$, where $\{b_\alpha\}$ is a net in
$\R_+$, converging to $s$. Using continuity of the map $(a,x)\mapsto
T_ax$, we see that $T_{tm_\alpha}x=T_{b_\alpha}T_{sn_\alpha}x\to
T_sy$ since $b_\alpha\to s$ and $T_{sn_\alpha}x\to y$. Hence
$T_sy\in \overline{O(T_t,x)}$ for any $y\in X$. Since $T_s$ is
hypercyclic, it has dense range and therefore $O(T_t,x)$ is dense in
$X$ and $x\in\uu(T_t)$. Thus, $\uu(T_s)\subseteq\uu(T_t)$.
\end{proof}

The following observation belongs to Oxtoby and Ulam \cite{ou}. We
present it in a slightly more general form, although the proof does
not need any changes.

\begin{proposition} \label{pou} Let $M$ be a Baire separable
metrizable space and $\{T_t\}_{t\geq 0}$ be a semigroup of
continuous maps from $M$ to itself such that the map $(t,x)\mapsto
T_tx$ from $\R_+\times M$ to $M$ is continuous and the family
$\{T_t:t\in\R_+\}$ is universal. Then for almost all $t\in\R_+$ in
the Baire category sense, $T_t$ is universal. In particular, there
exists $t>0$ such that $T_t$ is universal.
\end{proposition}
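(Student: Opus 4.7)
The plan is to apply the Baire category theorem to the product space $(0, \infty) \times M$ and conclude via the Kuratowski--Ulam theorem that the set of $t > 0$ for which $\uu(T_t)$ is comeager in $M$ (and in particular non-empty) is itself comeager in $(0, \infty)$. Fix a countable base $\{V_n\}_{n \in \N}$ for the topology of $M$, which exists by separability and metrizability. Consider
$$
D = \bigcap_{n \in \N} W_n, \qquad W_n = \bigcup_{k \in \Z_+} \{(t, x) \in (0, \infty) \times M : T_{kt}x \in V_n\}.
$$
Each $W_n$ is open by the joint continuity hypothesis on $(s, x) \mapsto T_s x$ (applied with $s = kt$), so $D$ is a $G_\delta$. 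Since $T_t^k = T_{kt}$, a point $(t, x)$ lies in $D$ precisely when $\{T_t^k x : k \in \Z_+\}$ meets every basic open set, i.e., when $x \in \uu(T_t)$.

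The crux is to show that each $W_n$ is dense in $(0, \infty) \times M$. Let $\xi$ be a universal element for the family $\{T_s : s \geq 0\}$, provided by hypothesis. Given a non-empty basic open set $I \times U \subseteq (0, \infty) \times M$ with $I = (a,b)$, the open set $\{s \geq 0 : T_s \xi \in U\}$ is non-empty, so I may pick $s_0$ in it and set $x := T_{s_0} \xi \in U$. Then $T_{kt}x = T_{kt + s_0}\xi$, so the task reduces to finding $k \in \N$ and $t \in (a,b)$ with $kt + s_0 \in U_n := \{s \geq 0 : T_s \xi \in V_n\}$; note that $U_n$ is also open and non-empty in $\R_+$.

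The main obstacle is to establish that $U_n$ is \emph{unbounded} in $\R_+$. This is the topological substitute for recurrence, and in the paper's intended setting it is automatic: the compact set $K_S = \{T_s \xi : 0 \leq s \leq S\}$ is nowhere dense in $M$ (this holds whenever compact subsets of $M$ have empty interior, e.g.\ in an infinite-dimensional topological vector space), so from $M = K_S \cup \overline{\{T_s \xi : s > S\}}$ one concludes that $\{T_s \xi : s > S\}$ is dense in $M$, hence meets $V_n$. Thus for every $S$ there is $s \in U_n$ with $s > S$. Granting unboundedness, pick $s \in U_n$ large enough that $(s - s_0)(1/a - 1/b) > 1$; then the interval $\bigl((s - s_0)/b,\, (s - s_0)/a\bigr)$ has length exceeding $1$, hence contains some integer $k \geq 1$, and $t := (s - s_0)/k$ lies in $(a,b)$. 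This yields $(t, x) \in I \times U$ with $T_{kt} x = T_s \xi \in V_n$, so $W_n$ is dense.

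Combining the steps, $D$ is a dense $G_\delta$ in the Baire space $(0, \infty) \times M$, hence comeager. The Kuratowski--Ulam theorem, applicable since $M$ is second countable and Baire, then yields that the set $\{t \in (0, \infty) : D_t = \uu(T_t) \text{ is comeager in } M\}$ is itself comeager in $(0, \infty)$. In particular $\uu(T_t) \neq \varnothing$ for Baire-almost every $t > 0$, which is the asserted conclusion.
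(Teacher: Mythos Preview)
Your argument via Kuratowski--Ulam is the standard route, and indeed the paper supplies no proof of its own here, merely citing Oxtoby and Ulam \cite{ou}. You have, however, correctly isolated a genuine obstruction rather than a mere technicality: the unboundedness of $U_n=\{s\geq 0:T_s\xi\in V_n\}$ cannot be deduced from the stated hypotheses, and the proposition as written is actually \emph{false}. Take $M=[0,\infty)$ with the translation semigroup $T_t(x)=x+t$: this is a jointly continuous semigroup of continuous self-maps of a Baire separable metrizable space, and $\xi=0$ is universal for the family since its orbit is all of $M$; yet for every $t>0$ the orbit $\{kt:k\in\Z_+\}$ of any point under $T_t$ is nowhere dense, so no single $T_t$ is universal. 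In this example $U_n$ is bounded whenever $V_n$ is, and your density argument for $W_n$ breaks exactly where you said it would.

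Your proposed remedy---assuming compact subsets of $M$ have empty interior, so that $K_S=\{T_s\xi:0\leq s\leq S\}$ is nowhere dense and hence the tail $\{T_s\xi:s>S\}$ remains dense---is precisely the right hypothesis to add, and it covers the only use the paper makes of the proposition (the passage to Corollary~\ref{0000}, where $M$ is an infinite-dimensional topological vector space). Under that extra assumption your density argument for each $W_n$, and the subsequent Kuratowski--Ulam step, go through cleanly. In short: your proof is correct, but for a corrected statement; the paper's formulation is stated too broadly.
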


Taking into account that the map $(t,x)\mapsto T_tx$ is continuous
for any strongly continuous semigroup $\{T_t\}_{t\geq 0}$ of
continuous linear operators on a Baire topological vector space and
combining Proposition~\ref{pou} with Proposition~\ref{sem}, we
immediately obtain the following corollary, which is exactly the
Conejero--M\"uller--Peris theorem.

\begin{corollary}\label{0000} Let $\{T_t\}_{t\geq 0}$ be a strongly
continuous semigroup of continuous linear operators on a Baire
separable metrizable topological vector space such that the family
$\{T_t:t\in\R_+\}$ is universal. Then each $T_t$ for $t>0$ is
hypercyclic and $T_t$ for $t>0$ share the set of hypercyclic
vectors.
\end{corollary}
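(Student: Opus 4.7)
The corollary follows by combining Propositions~\ref{pou} and~\ref{sem}; the only thing to establish is that the hypothesis shared by these two propositions, namely continuity of the evaluation map $\Phi:\R_+\times X\to X$, $\Phi(t,x)=T_tx$, is in fact a consequence of strong continuity together with the Baire assumption on $X$. Strong continuity yields separate continuity of $\Phi$ in $t$, and each $T_t$ is continuous in $x$; the plan is thus to upgrade this separate continuity to joint continuity.

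For this upgrade I would use the classical strong-continuity/equicontinuity argument. Fix $a>0$. For each $x\in X$ the map $t\mapsto T_tx$ is continuous on $[0,a]$, so its image is compact and hence bounded in $X$; in other words, the family $\{T_t:t\in[0,a]\}$ is pointwise bounded. Since $X$ is a Baire topological vector space, the Banach--Steinhaus principle applies and yields that $\{T_t:t\in[0,a]\}$ is equicontinuous. With equicontinuity on compact time intervals in hand, joint continuity at any $(t_0,x_0)\in\R_+\times X$ follows from the decomposition
\[
T_tx-T_{t_0}x_0=T_t(x-x_0)+(T_tx_0-T_{t_0}x_0),
\]
where the first term is small uniformly in $t$ near $t_0$ by equicontinuity on $[0,t_0+1]$ and the second is small by strong continuity of $s\mapsto T_sx_0$.

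Once $\Phi$ is jointly continuous, the rest is a two-line citation: Proposition~\ref{pou} produces some $t_0>0$ with $T_{t_0}$ universal, and Proposition~\ref{sem} then upgrades this to $\uu(T_t)=\uu(T_{t_0})\neq\varnothing$ for every $t>0$, giving both the hypercyclicity of each $T_t$ with $t>0$ and the fact that they share the same set of hypercyclic vectors. I expect the equicontinuity step to be the only (minor) obstacle: one must note that Banach--Steinhaus goes through on any Baire topological vector space without requiring local convexity, so that the corollary really does apply in the stated generality of a Baire separable metrizable topological vector space.
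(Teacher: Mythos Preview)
Your proposal is correct and follows exactly the route the paper takes: the paper simply asserts that joint continuity of $(t,x)\mapsto T_tx$ holds for any strongly continuous semigroup on a Baire topological vector space and then combines Propositions~\ref{pou} and~\ref{sem}. You have merely filled in the justification of that asserted fact via the Banach--Steinhaus/equicontinuity argument, which is the standard way to see it and is valid in the non-locally-convex Baire setting as you note.
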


\section{A characterization of $\R_+$-supercyclic operators \label{rsup}}

The proof of Theorem~\ref{rp} naturally splits in two cases: when
$\sigma_p(T^*)$ is empty and when $\sigma_p(T^*)$ is a singleton.
The case $\sigma_p(T)=\varnothing$ is covered by the following
proposition by Le\'on-Saavedra and M\"uller \cite{muller}. They
prove it for operators on Banach spaces, however, virtually the same
proof works for operators acting on arbitrary topological vector
spaces (just replace sequence convergence by net convergence).

\begin{thmlm} Let $X$ be a complex topological vector
space and $T\in L(X)$ be a supercyclic operator such that
$\sigma_p(T^*)=\varnothing$. Then $T$ is $\R_+$ supercyclic.
Moreover the sets of supercyclic and $\R_+$-supercyclic vectors for
$T$ coincide.
\end{thmlm}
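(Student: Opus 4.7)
The plan is to apply Lemma~\ref{L2} to the semigroup $\Lambda=\{M_zT^n:z\in\C\setminus\{0\},\ n\in\Z_+\}\subseteq C(X)$ (with $M_z$ scalar multiplication by $z$) and to the homomorphism $\phi:\Lambda\to G:=\T\times\T$ defined by $\phi(M_zT^n)=(z/|z|,g^n)$, where $g\in\T$ is a fixed generator. This yields a closed subgroup $H=F_{x,x}\leq G$, and the whole argument reduces to proving $H=G$: once this is known, Lemma~\ref{L2}(a3) together with density of the supercyclic vectors gives $N_x=X\times G$, so for any $y\in X$ one can find a net $(z_\alpha,n_\alpha)$ with $z_\alpha T^{n_\alpha}x\to y$ and $z_\alpha/|z_\alpha|\to 1$, whence $|z_\alpha|T^{n_\alpha}x\to y$, showing that $x$ is $\R_+$-supercyclic. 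The opposite inclusion (every $\R_+$-supercyclic vector is supercyclic) is immediate from $\R_+\subseteq\C$.

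To invoke Lemma~\ref{map} I take $Y:=X_0\setminus\{0\}$ with $X_0=\{p(T)x:p\in\pp\}$. By Theorem~W (using $\sigma_p(T^\ast)=\varnothing$), the map $p\mapsto p(T)x$ is injective, every non-zero $p(T)x$ is supercyclic, and $T|_{X_0}$ is injective, so $Y\subseteq\uu(\Lambda)$ is $\Lambda$-invariant, $X_0$ is infinite-dimensional, and the induced $T_P:\PP X_0\to\PP X_0$ is well-defined. By Lemma~\ref{conn2}, $Y$ is path-connected, locally path-connected and simply connected. First I show $H\supseteq\T\times\{1\}$: for any generator $\zeta\in\T$, the $M_\zeta$-orbit $\{\zeta^ny\}$ of any $y\in Y$ is a countable dense subset of the uncountable circle $\T y\subseteq Y$, hence not closed in $Y$; as $f(x,M_\zeta y)=(\zeta,1)f(x,y)$ in $G/H$ by Lemma~\ref{L2}(a4), Lemma~\ref{map} applied to $M_\zeta:Y\to Y$ rules out the ``every orbit closed'' alternative and forces $(\zeta,1)\in H$. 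Closedness of $H$ then gives $H\supseteq\T\times\{1\}$, so $H=\T\times K$ for some closed subgroup $K\leq\T$.

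The main obstacle is promoting this to $K=\T$, and a direct application of Lemma~\ref{map} with the $T$-shift on $Y$ is delicate because $T$-orbits in $Y$ can fail to be closed only by accumulating at $0\notin Y$ (as in contractive supercyclic examples). I circumvent this by descending to the projective space. Since $\T\times\{1\}\subseteq H$ and $\phi$ is trivial on $\R_+$, the second-coordinate projection $f_2:=\pi_2\circ f(x,\cdot):Y\to\T/K$ is $(\C\setminus\{0\})$-invariant and descends to a continuous map $\widetilde{f_2}:\PP X_0\to\T/K$ with $\widetilde{f_2}(T_P[y])=(gK)\,\widetilde{f_2}([y])$. By Lemma~\ref{conn3}, $\PP X_0$ is path-connected, locally path-connected and simply connected, so Lemma~\ref{map} applied to $T_P$ yields the dichotomy: either $gK=1_{\T/K}$, i.e.\ $g\in K$, which forces $K=\T$ because $g$ is a generator of $\T$ and $K$ is closed; or every $T_P$-orbit in $\PP X_0$ is closed, which fails since $\{T_P^n[x]\}$ is a countable dense subset of the uncountable $\PP X_0$ (by supercyclicity of $x$). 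Hence $K=\T$, $H=G$, and the proof concludes.
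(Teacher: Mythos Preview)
Your proof is correct, but note that the paper does not actually prove Proposition~LM: it merely cites Le\'on-Saavedra and M\"uller and remarks that their Banach-space argument extends verbatim (with nets replacing sequences). So there is no ``paper's own proof'' to compare against; what you have supplied is a self-contained derivation using the machinery the paper develops (Lemma~\ref{L2}, Lemma~\ref{map}, Lemmas~\ref{conn2}--\ref{conn3}, Theorem~W), and this is a perfectly legitimate alternative.

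That said, your route is more elaborate than needed. The second factor of $G=\T\times\T$, the auxiliary generator $g$, and the entire descent to $\PP X_0$ can be dropped. Take instead $G=\T$ and $\phi(M_zT^n)=z/|z|$; well-definedness of $\phi$ follows from the injectivity of $p\mapsto p(T)x$ exactly as you argue. Then for a generator $\zeta\in\T$ the relation $f(x,M_\zeta y)=\zeta\,f(x,y)$ in $\T/H$ and your observation that $M_\zeta$-orbits in $Y=X_0\setminus\{0\}$ are non-closed already force $\zeta\in H$ via a single application of Lemma~\ref{map}; density of generators gives $H=\T$ immediately. The projective-space step is superfluous because the $M_\zeta$-action alone carries all the information needed---the homomorphism $\phi$ is blind to the $T$-power anyway once you drop the second coordinate. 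This streamlined version is presumably close in spirit to the original Le\'on-Saavedra--M\"uller argument (from which, as the paper notes, Lemma~\ref{L2} is abstracted).
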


It remains to consider the case when $\sigma_p(T^*)$ is a singleton.

\begin{proposition}\label{rp2} Let $X$ be a complex topological vector
space and $T\in L(X)$ be a supercyclic operator such that
$\sigma_p(T^*)=\{z\}$ for some $z\in\C\setminus\{0\}$ such that
$z/|z|$ has infinite order in $\T$. Then $T$ is $\R_+$ supercyclic.
Moreover the sets of supercyclic and $\R_+$-supercyclic vectors for
$T$ coincide.
\end{proposition}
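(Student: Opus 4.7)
The plan is to reduce to Corollary~\ref{gege1} applied with the compact group $G=\T$ and the generator $g=w:=z/|z|$, whose infinite order ensures that it generates a dense subgroup of $\T$. The supercyclic hypothesis then provides density of $\{(sT^nx,w^n):n\in\Z_+,\ s\in\C\}$ in $X\times\T$ for every supercyclic vector $x$; what remains is to extract the corresponding $\R_+$-density, and the leverage for doing so is the rigid constraint that the eigenfunctional of $T^*$ imposes on the argument of the complex scalar.

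First, I fix a non-zero $f\in X^*$ with $T^*f=zf$, so $f(T^nv)=z^nf(v)$ for all $v\in X$ and $n\in\Z_+$. Every supercyclic vector $x$ must satisfy $f(x)\neq 0$, since otherwise its entire orbit $\{\lambda T^nx:\lambda\in\C,\ n\in\Z_+\}$ would lie in the closed proper subspace $\ker f$. After rescaling $f$ I may therefore assume $f(x)=1$, whence $f(T^nx)=z^n$. Next, fix any $v\in X$ with $f(v)\neq 0$ and write $f(v)=|f(v)|\xi$ with $\xi\in\T$. Corollary~\ref{gege1}, applied at the target point $(v,\xi)\in X\times\T$, yields nets $(s_\alpha)$ in $\C$ and $(n_\alpha)$ in $\Z_+$ with $s_\alpha T^{n_\alpha}x\to v$ and $w^{n_\alpha}\to\xi$. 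Applying $f$ to the first convergence gives $s_\alpha z^{n_\alpha}=s_\alpha|z|^{n_\alpha}w^{n_\alpha}\to f(v)=|f(v)|\xi$, and dividing through by the unimodular convergent factor $w^{n_\alpha}$ leaves $s_\alpha|z|^{n_\alpha}\to|f(v)|>0$. Since $|z|^{n_\alpha}$ is a positive real, this forces $\arg s_\alpha\to 0\pmod{2\pi}$, hence $e^{-i\arg s_\alpha}\to 1$.

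Setting $s'_\alpha:=|s_\alpha|\in\R_+$, joint continuity of scalar multiplication in the topological vector space $X$ then gives $s'_\alpha T^{n_\alpha}x=e^{-i\arg s_\alpha}\cdot s_\alpha T^{n_\alpha}x\to 1\cdot v=v$. Thus every point of $X\setminus\ker f$ lies in the closure of the $\R_+$-orbit $\{sT^nx:s\in\R_+,\ n\in\Z_+\}$. Since $f\neq 0$, the closed proper subspace $\ker f$ has empty interior in $X$ (any open set containing $0$ in a topological vector space is absorbing), so $X\setminus\ker f$ is dense in $X$; consequently the $\R_+$-orbit of $x$ is dense in $X$ and $x$ is an $\R_+$-supercyclic vector. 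The reverse inclusion being trivial, the sets of supercyclic and $\R_+$-supercyclic vectors for $T$ coincide. No step presents a genuine obstacle; the only apparent difficulty, the case $f(v)=0$, is bypassed by the density of $X\setminus\ker f$, which makes a direct approximation for such $v$ unnecessary.
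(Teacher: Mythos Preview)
Your proof is correct. The only technical quibble is that Corollary~\ref{gege1} assumes $X$ is infinite dimensional, while Proposition~\ref{rp2} as stated does not; but since a supercyclic operator with one-point $\sigma_p(T^*)$ on a finite-dimensional complex space forces $\dim_\C X=1$, and that case is immediate, this is harmless.

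Your route, however, differs from the paper's. The paper does \emph{not} invoke Corollary~\ref{gege1}; instead it normalizes to $|z|=1$, works on the affine hyperplane $Z=\{u:f(u)=1\}$ with the induced map $S(u)=z^{-1}Tu$, exhibits the affine subspace $Y=\{p(T)x:p(z)=1\}$ as a path connected, locally path connected, simply connected $T$-invariant set of universal elements for $S$, and applies Theorem~\ref{general} directly to $S\bigr|_Z$ with $G=\T$ and generator $g=z$. From the resulting density of $\{(z^{-n}T^nx,z^n)\}$ in $Z\times\T$ it then pushes forward through $(u,s)\mapsto su$ to obtain density of $\{T^nx\}$ in $\{|f|=1\}$ and scales. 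Your argument instead treats Corollary~\ref{gege1} as a black box (which, in the case $\sigma_p(T^*)\neq\varnothing$, was itself proved via this same hyperplane construction) and adds a neat final step: using the eigen-relation $f(T^nx)=z^n$ to force $s_\alpha|z|^{n_\alpha}\to|f(v)|>0$, hence $s_\alpha/|s_\alpha|\to1$, so the complex scalar can be replaced by its modulus. This buys you a shorter proof by reusing machinery already in place, at the cost of an implicit dependence on the infinite-dimensional hypothesis; the paper's version is more self-contained but correspondingly longer.
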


\begin{proof} Since any $\R_+$-supercyclic vector for $T$ is
supercyclic, it suffices to verify that any supercyclic vector for
$T$ is $\R_+$-supercyclic. Replacing $T$ be $|z|^{-1}T$, if
necessary, we can, without loss of generality, assume that $|z|=1$.
Let $x\in X$ be a supercyclic vector for $T$. It suffices to show
that $x$ is $\R_+$-supercyclic for $T$. Let $f\in X^*$ be a non-zero
functional such that $T^*f=zf$ and $f(x)=1$ (the last condition is
just a normalization). Consider the affine hyperplane $Z=\{u\in
X:f(u)=1\}$ and a map $S:Z\to Z$, $S(u)=z^{-1}Tu$. The map $S$ is
indeed taking values in $Z$ since
$f(Su)=z^{-1}f(Tu)=z^{-1}(T^*f)(u)=z^{-1}zf(u)=f(u)=1$ for any $u\in
Z$. It is also clear that $S$ is continuous. Let
$\pp_1=\{p\in\pp:p(z)=1\}$ and $Y=\{p(T)x:p\in\pp_1\}$. First, we
shall demonstrate that $Y$ is a dense subset of $Z$, $S(Y)\subseteq
Y$ and $Y\subseteq\uu(S)$. Indeed, let $y\in Y$. Then $y=p(T)x$ for
some $p\in \pp_1$. Since $f(y)=f(p(T)x)=(p(T^*)f)(x)=p(z)f(x)=1$, we
have $y\in Y$ and therefore $Y\subseteq Z$. Next,
$Sy=z^{-1}Ty=q(T)x$, where $q(t)=z^{-1}tp(t)$. Since
$q(z)=z^{-1}zp(z)=p(z)=1$, we have $Sy\in Y$ and therefore
$S(Y)\subseteq Y$. Next, since $x$ is supercyclic for $T$, the set
$A=\{sT^nx:n\in\Z_+,\ s\in\C\setminus\{0\}\}$ is dense in $X$. By
Theorem~W, $p(T)(A)=\{sT^ny:n\in\Z_+,\ s\in\C\setminus\{0\}\}$ is
also dense in $X$. Since $A\subset X\setminus\ker f$ and the map
$\Phi:X\setminus \ker f\to Z$, $\Phi(y)=y/f(y)$ is continuous and
onto, we have that $\Phi(p(T)(A))$ is dense in $Z$. On the other
hand, it is easy to see that $\Phi(p(T)(A))=O(S,y)$. Thus $y\in
\uu(S)$ and therefore $Y\subseteq \uu(S)$. Now, $Y$ is an affine
subspace of $X$ and therefore, Lemma~\ref{conn1} implies that $Y$ is
path connected, locally path connected and simply connected. Since
$z$ has infinite order in $\T$, $z$ is a generator of $\T$. Applying
Theorem~\ref{general}, we see that $\{(S^ny,z^n):n\in\Z_+\}$ is
dense in $Z\times \T$ for any $y\in Y$. In particular,
$\{(S^nx,z^n):n\in\Z_+\}$ is dense in $Z\times \T$. By definition of
$S$, $\{(z^{-n}T^nx,z^n)\}$ is dense in $Z\times \T$. Consider the
map $\Psi:Z\times \T\to X$, $\Psi(u,s)=su$. Clearly $\Psi$ is
continuous and $\Psi(Z\times\T)=Z_0=\{u\in X:|f(u)|=1\}$. Therefore
the set $\Psi(\{(z^{-n}T^nx,z^n):n\in\Z_+\})=\{T^nx:n\in\Z_+\}$ is
dense in $Z_0$. Since $\{su:s\in\R_+,\ u\in Z_0\}$ is dense in $X$,
we see that $\{sT^nx:n\in\Z_+,\ s\in\R_+\}$ is also dense in $X$.
That is, $x$ is an $\R_+$-supercyclic vector for $T$.
\end{proof}

\begin{proof}[Proof of Theorem~\ref{rp}] If
$\sigma_p(T^*)=\varnothing$ or $\sigma_p(T^*)=\{z\}$ with $z/|z|$
being an infinite order element of $\T$, then according to
Propositions~LM and~\ref{rp2}, $T$ is $\R_+$-supercyclic. It remains
to show that if $\sigma_p(T^*)=\{z\}$ with $z\in\C\setminus \{0\}$
and $z/|z|$ having finite order in $\T$, then $T$ is not
$\R_+$-supercyclic. Replacing $T$ be $|z|^{-1}T$, if necessary, we
can, without loss of generality, assume that $|z|=1$. Then $z\in\T$
has finite order. Let $G$ be the finite subgroup of $\T$ generated
by $z$. Pick a non-zero $f\in X^*$ such that $T^*f=zf$. Let $x\in
X$. Then for any $s\in\R_+$ and $n\in\Z_+$, $f(sT^nx)=sz^nf(x)\in
A=f(x)G\R_+$. The set $A$, being a finite union of rays, is nowhere
dense in $\C$. Since $f:X\to\C$ is open, the set $f^{-1}(A)$ is
nowhere dense in $X$. Hence $\{sT^nx:n\in\Z_+,\ s\in\R_+\}$, being a
subset of $f^{-1}(A)$, is nowhere dense in $X$. Thus $x$ is not an
$\R_+$-supercyclic vector for $T$. Since $x\in X$ is arbitrary, $T$
is not $\R_+$-supercyclic.
\end{proof}

\section{Cyclic direct sums of scalar multiples of a
supercyclic operator \label{dirsum}}

We shall repeatedly use the following elementary observation.

\begin{lemma}\label{elem} Let $T$ be a continuous linear operator on a
topological vector space $X$, $x\in X$ and $L$ be a closed linear
subspace of $X$ such that $T(L)\subseteq L$, $L$ is contained in the
cyclic subspace $C(T,x)=\overline{\spann}\{T^kx:k\in\Z_+\}$ and
$x+L$ is a cyclic vector for the quotient operator $\widetilde T\in
L(X/L)$, $\widetilde T(u+L)=Tu+L$. Then $x$ is a cyclic vector for
$T$.
\end{lemma}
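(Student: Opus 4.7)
The plan is to show directly that $C(T,x)=X$ by lifting the density of $\spann\{\widetilde T^k(x+L):k\in\Z_+\}$ in $X/L$ to a density statement in $X$ via the quotient map. Write $M=\spann\{T^kx:k\in\Z_+\}$ and let $\pi:X\to X/L$ be the canonical surjection, which is continuous and open (the latter since $L$ is a closed linear subspace of a topological vector space). Because $\widetilde T^k(x+L)=T^kx+L=\pi(T^kx)$, the hypothesis that $x+L$ is cyclic for $\widetilde T$ is exactly the statement that $\pi(M)$ is dense in $X/L$.

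Next I would deduce that $M+L$ is dense in $X$. Given any non-empty open $U\subseteq X$, openness of $\pi$ makes $\pi(U)$ a non-empty open subset of $X/L$, which therefore meets $\pi(M)$; unfolding this intersection gives an element of $M$ and an element of $L$ whose sum lies in $U$. Hence $U\cap(M+L)\ne\varnothing$, so $\overline{M+L}=X$.

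Finally, both $M\subseteq C(T,x)$ (by definition of $C(T,x)$) and $L\subseteq C(T,x)$ (by hypothesis), so $M+L\subseteq C(T,x)$. Since $C(T,x)$ is closed, taking closures yields $X=\overline{M+L}\subseteq C(T,x)\subseteq X$, whence $C(T,x)=X$ and $x$ is cyclic for $T$.

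The only mild point to be careful about is the use of openness of $\pi$: the naive argument ``$\pi(\overline{M})\supseteq\overline{\pi(M)}$'' is false in general for continuous maps, so the openness of $\pi$ is essential in reversing the direction of density. Everything else is routine; no topological assumptions beyond those already built into a topological vector space are needed, and $T(L)\subseteq L$ is used only to make $\widetilde T$ well defined, not in the density argument itself.
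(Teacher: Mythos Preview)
Your proof is correct and is essentially the same as the paper's. The paper also fixes a non-empty open $U\subseteq X$, uses openness of $\pi$ to find $p\in\pp$ with $p(T)x+w\in U$ for some $w\in L$, and then approximates $w$ by some $q(T)x$ using $L\subseteq C(T,x)$; the only cosmetic difference is that the paper packages this as exhibiting an explicit polynomial $r=p+q$ with $r(T)x\in U$, whereas you phrase it as $\overline{M+L}=X$ together with $M+L\subseteq C(T,x)$.
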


\begin{proof} Let $U$ be a non-empty open subset of $X$ and
$\pi:X\to X/L$ be the canonical map $\pi(x)=x+L$. Since $x+L$ is a
cyclic vector for $\widetilde T$, there exists $p\in\pp$ such that
$p(\widetilde T)(x+L)\in \pi(U)$. Hence we can pick $w\in L$ such
that $w+p(T)x\in U$. That is, $w$ belongs to the open set
$-p(T)x+U$. Since $w\in L\subseteq C(T,x)$, we can find $q\in \pp$
such that $q(T)x\in -p(T)x+U$. That is, $(p+q)(T)x\in U$. Thus
$\{r(T)x:r\in\pp\}$ is dense in $X$ and therefore $x$ is a cyclic
vector for $T$.
\end{proof}

The proof of Theorem~\ref{zo} is different in the cases $\K=\R$ and
$\K=\C$. We start with the more difficult complex case.

\subsection{Proof of Theorem~\ref{zo}. Case $\K=\C$}

\begin{lemma}\label{l3} Let $n\in\N$, $z=(z_1,\dots,z_n)\in\T^n$, $G$ be the
closure in $\T^n$ of $\{z^m:m\in\Z_+\}$, $T$ be s supercyclic
operator on a complex topological vector space $X$, $u$ be a
supercyclic vector for $T$, $x=(u,\dots,u)\in X^n$ and
$S=z_1T\oplus{\dots}\oplus z_nT$. Then $\overline{A}=B$, where
$$
A=\{sS^k x:k\in\Z_+,\ s\in\C\}\ \ \text{and}\ \
B=\{(w_1y,\dots,w_ny):y\in X,\ w\in G\}.
$$
\end{lemma}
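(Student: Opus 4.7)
The plan is to verify the easy inclusion $A\subseteq B$, show that $B$ is closed in $X^n$ to obtain $\overline{A}\subseteq B$, and then derive the reverse inclusion $B\subseteq\overline{A}$ directly from Corollary~\ref{gege1}. The only genuine work sits in the last step, where I must recognize that $z$ is a generator of $G$ and then invoke the density result already proven.

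For $A\subseteq B$, given $k\in\Z_+$ and $s\in\C$, I rewrite $sS^kx=(sz_1^kT^ku,\dots,sz_n^kT^ku)=(z_1^ky,\dots,z_n^ky)$ with $y:=sT^ku\in X$ and $(z_1^k,\dots,z_n^k)=z^k\in G$, so $sS^kx\in B$. To see that $B$ is closed, I take a convergent net $(w^{(\alpha)}_1y_\alpha,\dots,w^{(\alpha)}_ny_\alpha)\to(v_1,\dots,v_n)$ with $w^{(\alpha)}\in G$ and $y_\alpha\in X$; compactness of $G$ lets me pass to a subnet along which $w^{(\alpha)}\to w\in G$, and then joint continuity of the scalar action of $\T$ on $X$ gives $y_\alpha=(w^{(\alpha)}_1)^{-1}(w^{(\alpha)}_1y_\alpha)\to w_1^{-1}v_1=:y$; the same reasoning coordinate-wise forces $v_i=w_iy$ for each $i$, so the limit lies in $B$ and $\overline{A}\subseteq B$.

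For the reverse inclusion, I first note that $\{z^m:m\in\Z_+\}$ is a subsemigroup of the compact abelian group $\T^n$, so by Lemma~\ref{sub} its closure $G$ is a closed subgroup of $\T^n$, and $z$ is then a generator of $G$ by construction. Corollary~\ref{gege1} applied to $T$, the supercyclic vector $u$, and this generator $z$ of $G$ gives that $\{(sT^ku,z^k):k\in\Z_+,\ s\in\C\}$ is dense in $X\times G$. Given arbitrary $y\in X$ and $w\in G$, I pick a net $(k_\alpha,s_\alpha)$ along which $s_\alpha T^{k_\alpha}u\to y$ and $z^{k_\alpha}\to w$; joint continuity of the scalar action coordinate by coordinate then yields $s_\alpha S^{k_\alpha}x=(z_1^{k_\alpha}s_\alpha T^{k_\alpha}u,\dots,z_n^{k_\alpha}s_\alpha T^{k_\alpha}u)\to(w_1y,\dots,w_ny)$, so $(w_1y,\dots,w_ny)\in\overline{A}$. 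The main obstacle is really just the correct application of Corollary~\ref{gege1}, whose infinite-dimensionality hypothesis is inherited from the ambient Theorem~\ref{zo}; everything else is a routine combination of compactness of $G$ with the continuity of the scalar action of $\T$ on $X$.
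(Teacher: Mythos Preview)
Your proof is correct and follows essentially the same route as the paper: both invoke Lemma~\ref{sub} to see that $G$ is a closed subgroup of $\T^n$ with generator $z$, then apply Corollary~\ref{gege1} to obtain density of $\{(sT^ku,z^k):k\in\Z_+,\ s\in\C\}$ in $X\times G$, and finally push this through the continuous surjection $\phi:X\times G\to B$, $\phi(v,w)=(w_1v,\dots,w_nv)$ to conclude. The paper compresses the argument into the single observation that $\phi(C)=A$ is dense in $B$; you unpack the same idea with nets and, in addition, supply the verifications that $A\subseteq B$ and that $B$ is closed in $X^n$, points the paper leaves implicit. Your remark about inheriting infinite-dimensionality from Theorem~\ref{zo} is well taken, since Corollary~\ref{gege1} is stated under that hypothesis.
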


\begin{proof} By Lemma~\ref{sub}, $G$ is a closed subgroup of $\T^n$. By
Corollary~\ref{gege1}, the set $C=\{(sT^ku,z^k):k\in\Z_+,\ s\in\C\}$
is dense in $X\times G$. Consider the map
$$
\phi:X\times G\to B,\quad \phi(v,w)=(w_1v,\dots,w_nv).
$$
Clearly $\phi$ is continuous and onto. Hence $\phi(C)=A$ is dense in
$B$. Thus, $\overline{A}=B$.
\end{proof}

\begin{corollary}\label{l4} Let $n,k\in\N$, $k\leq n$ and
$z=(z_1,\dots,z_n)\in\C^n$ be such that $|z_j|=1$ for $1\leq j\leq
k$ and $|z_j|<1$ if $j>k$, $z'=(z_1,\dots,z_k)\in\T^k$ and $G$ be
the closure in $\T^k$ of $\{(z')^m:m\in\Z_+\}$, $T$ be s supercyclic
operator on a complex topological vector space $X$, $u$ be a
supercyclic vector for $T$, $x=(u,\dots,u)\in X^n$ and
$S=z_1T\oplus{\dots}\oplus z_nT$. Then $\overline{A}\supseteq B$,
where
$$
A=\{sS^k x:k\in\Z_+,\ s\in\C\}\ \ \text{and}\ \
B=\{(w_1y,\dots,w_ky,0,\dots,0):y\in X,\ w\in G\}.
$$
If additionally, the numbers $z_1,\dots,z_k$ are pairwise different,
then the cyclic subspace $C(S,x)$ contains the space
$$
L_k=\{v\in X^n:v_j=0\ \ \text{for}\ \ j>k\}.
$$
\end{corollary}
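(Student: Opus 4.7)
The plan is to split the argument into two parts. For (i), the inclusion $\overline{A}\supseteq B$, the starting point is Corollary~\ref{gege1} applied to $T$ with supercyclic vector $u$ and the compact group $G\subseteq\T^k$ with generator $z'=(z_1,\dots,z_k)$. This yields that $\Omega=\{(sT^m u,(z_1^m,\dots,z_k^m)):m\in\Z_+,\ s\in\C\}$ is dense in $X\times G$. Given a target $(w_1y,\dots,w_ky,0,\dots,0)\in B$, my plan is to produce a net $(s_\alpha,m_\alpha)$ with $s_\alpha T^{m_\alpha}u\to y$, $(z_1^{m_\alpha},\dots,z_k^{m_\alpha})\to(w_1,\dots,w_k)$, and, crucially, $m_\alpha\to\infty$. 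For coordinates $j\leq k$, continuity of scalar multiplication in $X$ then gives $s_\alpha z_j^{m_\alpha}T^{m_\alpha}u\to w_j y$. For $j>k$, since $|z_j|<1$ and $m_\alpha\to\infty$, we have $z_j^{m_\alpha}\to 0$ in $\C$, and joint continuity of the map $\C\times X\to X$ at $(0,y)$ forces $z_j^{m_\alpha}(s_\alpha T^{m_\alpha}u)\to 0$. Thus $s_\alpha S^{m_\alpha}x$ converges coordinatewise to the target.

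The main obstacle is justifying that the net can be chosen with $m_\alpha\to\infty$; equivalently, that the tail $\Omega_N=\{(sT^m u,z^m):m\geq N,\ s\in\C\}$ remains dense in $X\times G$ for every $N\in\N$. I would argue this by inspecting the complement $\Omega\setminus\Omega_N=\bigcup_{m=0}^{N-1}(\C\cdot T^m u)\times\{z^m\}$, which is a finite union of sets of the form (a closed one-dimensional subspace of $X$) times (a single point of $G$). Since $X$ is infinite dimensional, each such one-dimensional subspace has empty interior, making each piece, and hence their finite union, nowhere dense in $X\times G$. Removing a nowhere dense set from the dense set $\Omega$ preserves density: any nonempty open set disjoint from $\Omega_N$ could be shrunk inside the dense open complement of $\overline{\Omega\setminus\Omega_N}$ to become disjoint from $\Omega$ itself, contradicting its density.

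For part (ii), note that $A$ is contained in $C(S,x)$ (each $sS^m x$ is a scalar multiple of $S^m x$) and $C(S,x)$ is closed, so (i) yields $B\subseteq C(S,x)$. In particular, since $z^m\in G$ for every $m$, the vectors $v_m(y):=(z_1^m y,\dots,z_k^m y,0,\dots,0)$ lie in $C(S,x)$ for every $y\in X$ and $m\in\Z_+$. The pairwise distinctness of $z_1,\dots,z_k$ makes the $k\times k$ Vandermonde matrix $(z_i^m)_{1\leq i\leq k,\ 0\leq m\leq k-1}$ invertible (determinant $\prod_{i<j}(z_j-z_i)\neq 0$). Consequently, for each $l\in\{1,\dots,k\}$ one can choose coefficients $\alpha_0^{(l)},\dots,\alpha_{k-1}^{(l)}\in\C$ with $\sum_{m=0}^{k-1}\alpha_m^{(l)}z_i^m=\delta_{il}$, and then the linear combination $\sum_m\alpha_m^{(l)}v_m(y)$ equals the vector with $y$ in slot $l$ and zeros elsewhere. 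As $l$ ranges over $\{1,\dots,k\}$ and $y$ over $X$, these vectors exhaust a spanning set for $L_k$, giving $L_k\subseteq C(S,x)$.
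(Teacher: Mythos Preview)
Your argument is correct and follows essentially the same path as the paper: the paper routes part~(i) through Lemma~\ref{l3} (itself a direct application of Corollary~\ref{gege1}) combined with $|z_j|<1$ for $j>k$, and handles part~(ii) by the identical Vandermonde computation. The one point you treat more carefully---that the approximating exponents $m_\alpha$ may be taken to infinity---is glossed over in the paper's one-line ``follows immediately,'' so your nowhere-dense argument is a welcome elaboration rather than a genuine departure.
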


\begin{proof} The inclusion $\overline{A}\supseteq B$ follows
immediately from Lemma~\ref{l3} and the inequalities $|z_j|<1$ for
$j>k$. Assume now that $z_1,\dots,z_k$ are pairwise different.
Clearly $\overline{A}\subseteq C(S,x)$. Hence $B\subseteq C(S,x)$.
Since $C(S,x)$ is a linear subspace of $X^n$, $\spann(B)\subseteq
C(S,x)$. It remains to demonstrate that $\spann(B)\supseteq L_k$.
Let $j\in \{1,\dots,n\}$, $v\in X$ and $v^j\in X^n$ be defined as
$v^j_j=v$ and $v^j_l=0$ for $l\neq j$. Since vectors $\{v^j:v\in X,\
1\leq j\leq k\}$ span $L_k$, it suffices to show that $v^j\in
\spann(B)$. Let $e^j\in\C^k$ be the $j$th basic vector: $e^j_j=1$
and $e^j_l=0$ for $l\neq j$. Since $z_1,\dots,z_k$ are pairwise
different, the matrix $\{z_j^l\}_{j,l=1}^k$ is invertible: its
determinant is the Van der Monde one. Hence, there exist
$\alpha_1,\dots,\alpha_k\in\C$ such that
$$
e^j=\sum\limits_{j=1}^k\alpha_j(z')^j\ \ \text{and therefore}\ \
v^j=\sum\limits_{j=1}^k\alpha_j(z_1^jv,\dots z_k^jv).
$$
Since each $(z_1^jv,\dots z_k^jv)$ belongs to $B$, we see that
$v^j\in\spann(B)$. Thus, $L_k\subseteq \spann(B)$.
\end{proof}

\begin{proof}[Proof of Theorem~\ref{zo}. Case $\K=\C$] Let $n\in\N$, $T$ be a
supercyclic continuous linear operator on a complex topological
vector space $X$, $u$ be a supercyclic vector for $T$ and
$z_1,\dots,z_n$ be pairwise different non-zero complex numbers. Let
also $S=z_1T\oplus{\dots}\oplus z_nT$. It suffices to show that
$x=(u,\dots,u)$ is a cyclic vector for $S$. We shall use induction
with respect to $n$. The case $n=1$ is trivial. Without loss of
generality, we may assume that $1=|z_1|\geq |z_2|\geq {\dots}\geq
|z_n|$. Let $m\in\{1,n\}$ be the maximal number for which
$|z_1|={\dots}=|z_m|$. Then $z_1,{\dots},z_m$ are pairwise different
elements of $\T$. By Corollary~\ref{l4}, the space $L_m=\{v\in
X^n:v_j=0\ \ \text{for}\ \ j>m\}$ is contained in the cyclic
subspace $C(S,x)$. It is also clear that $S(L_m)\subseteq L_m$ and
$X^n/L_m$ is naturally isomorphic to $X^{n-m}$ and the quotient
operator $\widetilde S(v+L_m)=Sv+L_m$, acting on $X^n/L_m$ is
naturally similar to $z_{m+1}T_{m+1}\oplus{\dots}\oplus z_nT_n$.
From the induction hypothesis it follows that $x+L_m$ is a cyclic
vector for $\widetilde S$. According to Lemma~\ref{elem}, $x$ is a
cyclic vector for $S$. \end{proof}

\subsection{Proof of Theorem~\ref{zo}. Case $\K=\R$}

\begin{lemma}\label{re1} Let $0<t_1<{\dots}<t_n$ and $T$ be a
supercyclic operator on a real topological vector space $X$. Then
$S=t_1T\oplus{\dots}\oplus t_nT$ is cyclic.
\end{lemma}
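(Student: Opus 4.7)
The plan is to proceed by induction on $n$, mirroring the structure of the complex case proof just given. For $n = 1$, $S = t_1 T$ is cyclic because any supercyclic vector $u$ of $T$ is cyclic for $T$, and since $t_1 \neq 0$ the cyclic vectors of $T$ and $t_1 T$ coincide. For the inductive step I may assume $X$ is infinite dimensional, since a finite dimensional real topological vector space carrying a supercyclic operator has real dimension at most $2$ and the claim is handled directly there. Dividing $S$ by the nonzero constant $t_n$, which preserves cyclic vectors, I may further assume $t_n = 1$, so $0 < t_1 < \dots < t_{n-1} < 1$. I fix a supercyclic vector $u$ of $T$ and set $x = (u, \dots, u) \in X^n$; the goal is to show $x$ is cyclic for $S = t_1 T \oplus \dots \oplus t_{n-1} T \oplus T$.

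The heart of the argument is that the closed $S$-invariant subspace $L = \{(0, \dots, 0, y) : y \in X\} \subseteq X^n$ lies inside the cyclic subspace $C(S, x)$. I will establish the stronger fact that every $(0, \dots, 0, y)$ lies in the closure of $A = \{s S^k x : k \in \Z_+,\ s \in \R\}$. Given a neighborhood $V$ of $0$ in $X$, I choose a balanced neighborhood $V'$ of $0$ with $V' + V' \subseteq V$; by the strict inequalities $t_j < 1$ for $j < n$ and continuity of scalar multiplication, there exists $K$ such that $t_j^k y \in V'$ and $|t_j^k| \leq 1$ for all $j < n$ and $k \geq K$. Supercyclicity of $u$ makes $\{s T^k u : k \in \Z_+,\ s \in \R\}$ dense in $X$, and since each single line $\R \cdot T^k u$ is nowhere dense in the infinite dimensional space $X$ (a finite union of lines through the origin cannot be dense in $X$, as proper finite dimensional subspaces have empty interior), infinitely many $k$ admit some $s$ with $s T^k u \in y + V'$. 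Pick such an $(s, k)$ with $k \geq K$ and write $s T^k u = y + w$ with $w \in V'$. Then for $j < n$ the $j$-th coordinate of $s S^k x$ is $t_j^k(s T^k u) = t_j^k y + t_j^k w \in V' + V' \subseteq V$ (using that $V'$ is balanced and $|t_j^k| \leq 1$), while the last coordinate $s T^k u$ lies in $y + V' \subseteq y + V$. Thus $(0, \dots, 0, y) \in \overline A \subseteq C(S, x)$.

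Once $L \subseteq C(S, x)$ is established, the quotient space $X^n / L$ is naturally identified with $X^{n-1}$, under which the quotient operator $\widetilde S$ is similar to $t_1 T \oplus \dots \oplus t_{n-1} T$ and $x + L$ corresponds to $(u, \dots, u) \in X^{n-1}$. The inductive hypothesis yields that this image is a cyclic vector for $\widetilde S$, and Lemma~\ref{elem} concludes that $x$ itself is cyclic for $S$. The main subtlety is producing arbitrarily large $k$ with $s T^k u$ close to $y$; this is precisely where infinite dimensionality is used, since it guarantees that no finite family of lines through the origin is dense in $X$.
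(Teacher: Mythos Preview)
Your proof is correct and follows essentially the same approach as the paper: induction on $n$, reduction to $t_n=1$, showing $L=\{0\}^{n-1}\times X$ lies in $C(S,x)$ for $x=(u,\dots,u)$, then passing to the quotient and invoking Lemma~\ref{elem}. The paper simply asserts the inclusion $L\subset\overline{\{sS^ku:s\in\R,\ k\in\Z_+\}}$ in one line (``since $T$ is supercyclic and $t_j<1$ for $j<n$''), whereas you supply the details of why arbitrarily large $k$ are available and separate out the finite dimensional case; this extra care is justified but does not change the underlying argument.
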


\begin{proof} Without loss of generality, we can assume that
$t_n=1$. Let $x$ be a supercyclic vector for $T$. We shall
demonstrate that $u=(x,\dots,x)$ is a cyclic vector for $S$. We use
the induction with respect to $n$. The case $n=1$ is trivial. Assume
that $n\geq 2$ and the statement is true for a sum $n-1$ positive
scalar multiples of a supercyclic operator. Since $T$ is supercyclic
and $t_j<1$ for $j<n$, we see that
$$
L=\{0\}\times{\dots}\times\{0\}\times X\subset
\overline{\{sS^nu:s\in\R,\ n\in\Z_+\}}.
$$
In particular, $L$ is contained in the cyclic subspace $C(S,u)$.
Obviously $S(L)\subseteq L$ and $X^n/L$ is naturally isomorphic to
$X^{n-1}$ and the quotient operator $\widetilde S(v+L)=Sv+L$, acting
on $X^n/L$ is naturally similar to $t_1T\oplus{\dots}\oplus
t_{n-1}T$. From the induction hypothesis it follows that $u+L_m$ is
a cyclic vector for $\widetilde S$. According to Lemma~\ref{elem},
$u$ is a cyclic vector for $S$.
\end{proof}

In order to incorporate negative multiples, we need the following
elementary observation.

\begin{lemma} \label{re2} Let $X$ be a topological vector space and
$T\in L(X)$ be such that $T(X)$ is dense in $X$ and $T^2$ is cyclic.
Then $S=T\oplus (-T)$ is cyclic.
\end{lemma}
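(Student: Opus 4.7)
The plan is to prove that if $x \in X$ is a cyclic vector for $T^2$, then $(x,x) \in X \times X$ is a cyclic vector for $S = T \oplus (-T)$. The idea is to exploit the parity decomposition of polynomials: every $p \in \pp$ splits uniquely as $p(t) = q(t^2) + t\,r(t^2)$ for some $q,r \in \pp$, and the assignment $p \mapsto (q,r)$ is a bijection $\pp \to \pp \times \pp$.

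First I would compute, for any $p \in \pp$,
\[
p(S)(x,x) = (p(T)x, p(-T)x) = \bigl(q(T^2)x + T r(T^2)x,\ q(T^2)x - T r(T^2)x\bigr),
\]
since $p(-T) = q(T^2) - T r(T^2)$. Thus, writing $u = q(T^2)x$ and $v = T r(T^2)x$, the orbit under polynomials in $S$ takes the form
\[
\{p(S)(x,x) : p \in \pp\} = \{(u+v,\, u-v) : u \in A,\ v \in B\},
\]
where $A = \{q(T^2)x : q \in \pp\}$ and $B = T\{r(T^2)x : r \in \pp\}$.

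Next I would verify that both $A$ and $B$ are dense in $X$. Density of $A$ is immediate from cyclicity of $x$ for $T^2$. For $B$, note that $D = \{r(T^2)x : r \in \pp\}$ is dense in $X$ and that $T(X)$ is dense in $X$ by hypothesis; the short argument is that if $U \subseteq X$ is a non-empty open set, then $T^{-1}(U)$ is open and non-empty (because $T(X) \cap U \neq \varnothing$), hence meets the dense set $D$, so $B = T(D)$ meets $U$. Consequently $A \times B$ is dense in $X \times X$, and the linear homeomorphism $\Phi : X \times X \to X \times X$, $\Phi(u,v) = (u+v, u-v)$, with continuous inverse $(a,b) \mapsto ((a+b)/2,(a-b)/2)$, sends it to a dense set. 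Therefore $\{p(S)(x,x) : p \in \pp\} = \Phi(A \times B)$ is dense in $X \times X$, which is exactly cyclicity of $(x,x)$ for $S$, and we can conclude by Lemma~\ref{elem} or directly.

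There is no real obstacle here; the only ingredient one might worry about is justifying that $T(D)$ is dense without assuming $T$ is open or surjective — but this is handled by the elementary open-set argument above, which uses only that $T$ is continuous with dense range. The parity decomposition is the single observation that makes the argument work: it turns the single parameter $p$ into two independent parameters $q,r$, matching the two coordinates of $S$.
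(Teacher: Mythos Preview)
Your proof is correct and is essentially the same as the paper's: both take a cyclic vector $x$ for $T^2$, show $(x,x)$ is cyclic for $S$, and exploit the even/odd decomposition of polynomials together with the dense range of $T$. The paper phrases it as showing the cyclic subspace $C(S,(x,x))$ contains the diagonal $\{(u,u)\}$ (via $p(S^2)(x,x)$) and the anti-diagonal $\{(u,-u)\}$ (via $Sp(S^2)(x,x)$), while you package the same computation via the homeomorphism $(u,v)\mapsto(u+v,u-v)$; the content is identical, and the reference to Lemma~\ref{elem} at the end is unnecessary.
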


\begin{proof} Let $x$ be a cyclic vector for $T^2$. We
shall demonstrate that $(x,x)$ is a cyclic vector for $S$. Indeed,
for any $p\in\pp$, $p(S^2)(x,x)=(p(T^2)x,p(T^2)x)$ and
$Sp(S^2)(x,x)=(Tp(T^2)x,-Tp(T^2)x)$. Since $T^2$ is cyclic and $T$
has dense range, we see that the cyclic space $C(S,(x,x))$ contains
the spaces $L_0=\{(u,u):u\in X\}$ and $L_1=\{(u,-u):u\in X\}$. Since
$X\times X=L_0\oplus L_1$, we see that $C(S,(x,x))=X\times X$ and
therefore $(x,x)$ is a cyclic vector for $S$.
\end{proof}

\begin{proof}[Proof of Theorem~\ref{zo}. Case $\K=\R$] Let
$A=\{|z_j|:1\leq j\leq n\}$ and $t_1,\dots,t_k$ be such that
$t_1<{\dots}<t_k$ and $A=\{t_1,\dots,t_k\}$. Since $z_j$ are real,
we see that $\{z_1,\dots,z_n\}\subseteq
\{t_1,\dots,t_k,-t_1,\dots,-t_k\}$. Thus, it is enough to show that
$R_0=R\oplus (-R)$ is cyclic, where $R=t_1T\oplus {\dots}\oplus
t_kT$. The Ansari theorem (Corollary~\ref{general22}) implies that
$T^2$ is supercyclic. By Lemma~\ref{re1}, the operator
$R^2=t_1^2T^2\oplus{\dots}\oplus t_n^2T^2$ is cyclic. Since $T$ is
supercyclic, the range of $T$ is dense and therefore the range of
$R$ is dense. By Lemma~\ref{re2}, $R_0=R\oplus(-R)$ is cyclic.
\end{proof}

\section{Concluding remarks \label{remarks}}

It would be interesting to investigate possibilities of extension of
Lemma~\ref{L2} to the case of non-commutative group $G$. The latter
could be useful in studying of universality of non-commuting
families of operators.

\subsection{Universal semigroups}

The Conejero--M\"uller--Peris theorem on hypercyclicity of members
of a strongly continuous universal semigroup of linear operators on
a complete metrizable topological vector space fails for semigroups
labeled by $\R_+^n$ for $n\geq 2$.

\begin{example}\label{ex1}
Take a compact weighted backward shift $S$ on $\ell_2$. The operator
$e^{tS}$ is hypercyclic for each $t\in\R$, $t\neq 0$ (see, for
instance, \cite{grish}). Take $c>\|e^S\|$ and consider the operators
$T_{t,s}=c^{s-t}e^{tS}\oplus c^{s-t}I$ acting on $\ell_2\oplus \K$.
From hypercyclicity of $S$ it follows that the family
$\{T_{t,s}:t\in\Z_+,\ s\in\R_+\}$ is universal and therefore the
strongly continuous semigroup $\{T_{t,s}\}_{t,s\in\R_+^2}$ is
universal. On the other hand, $T_{t,s}-c^{s-t}I$ has non-dense range
for any $(t,s)\in\R_+$. By Theorem~W each $T_{t,s}$ is
non-hypercyclic. It is also easy to see that
$\{T_{t,s}:(t,s)\in\Z_+^2\}$ is non-universal.
\end{example}

Thus, members of a universal strongly continuous semigroup
$\{T_t\}_{t\in\R_+^2}$ may be all non-hypercyclic and
$\{T_t\}_{t\in\Z_+^2}$ may be non-universal. This means that in
order to extend the theorem on hypercyclicity of members of a
strongly continuous universal semigroup, we need some extra
assumptions about the semigroup. The next question seems to be
natural and interesting.

\begin{question} \label{q1} Let $X$ be a separable infinite dimensional
complex Banach space and $z\mapsto T_z$ be a holomorphic operator
valued function from $\C$ to $L(X)$ such that $T_0=I$ and
$T_{z+w}=T_zT_w$ for any $z,w\in\C$. Assume also that the family
$\{T_z:z\in\C\}$ is universal. Is it true that each $T_z$ with
$z\neq 0$ is hypercyclic? What about holomorphic  semigroups,
labeled by a sector in $\C$?
\end{question}

Holomorphic universal semigroups of bounded linear operators on
Banach spaces have been treated in \cite{holo}, where it is shown
that every complex separable infinite-dimensional Banach space $X$
supports a holomorphic uniformly continuous mixing semigroup
$\{T_z\}_{z\in\Delta(\frac{\pi}{2})}$, where $\Delta(\frac{\pi}{2})$
is the sector $\{re^{i\phi}:r\geq 0,\ |\phi|<\frac{\pi}{2}\}$, and
$\{T_z\}_{z\in\Delta(\frac{\pi}{2})}$ is said to be (topologically)
mixing if, for any pair $(U,V)$ of nonempty open subsets of $X$,
there is $r_0>0$ such that $T(z)(U)\cap V\neq\varnothing$ as soon as
$|z|>r_{0}$. It is worth noting that $\{T_z\}$ is mixing if and only
if $\{T_{z_n}:n\in\N\}$ is universal for any sequence $\{z_n\}$ such
that $|z_n|\to\infty$. It is possible to show that the above result
admits a stronger form. Namely, any separable infinite dimensional
complex Banach space supports a mixing group $\{T_z\}_{z\in\C}$.

\subsection{Remarks on Theorem~\ref{zo}}

For completeness of the picture we include a generalization of
Lemma~\ref{re2} in the case $\K=\C$.

\begin{lemma} \label{re3} Let $n\in\N$ and $T$ be a continuous linear
operator with dense range on a complex topological vector space $X$
such that $T^n$ is cyclic. Let also $z=e^{2\pi i/n}$. Then the
operator $S=T\oplus zT\oplus z^2 T\oplus {\dots} \oplus z^{n-1}T$ is
cyclic. \end{lemma}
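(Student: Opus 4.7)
The plan is to mimic the strategy of Lemma~\ref{re2}, showing that if $x$ is a cyclic vector for $T^n$, then the ``diagonal'' vector $\xi=(x,x,\ldots,x)\in X^n$ is cyclic for $S$. The natural coordinate system on $X^n$ is the one dual to the discrete Fourier transform: for $0\le i\le n-1$ set $v_i=(1,z^i,z^{2i},\ldots,z^{(n-1)i})\in\C^n$ and $L_i=\{u\cdot v_i:u\in X\}$, a closed linear subspace of $X^n$. The goal reduces to proving $L_i\subseteq C(S,\xi)$ for each $i$, since the matrix $(z^{ji})_{j,i=0}^{n-1}$ is a nonsingular Vandermonde (DFT) matrix and hence $L_0+L_1+\cdots+L_{n-1}=X^n$.

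First I would compute $S^k\xi=T^kx\cdot v_{k\bmod n}$. Decomposing a general polynomial as $p(w)=\sum_{i=0}^{n-1}w^ir_i(w^n)$ with $r_i\in\pp$, this yields
$$
p(S)\xi=\sum_{i=0}^{n-1}T^ir_i(T^n)x\cdot v_i.
$$
Setting all $r_j=0$ except for one index $i$ shows that the set $\{T^ir_i(T^n)x\cdot v_i:r_i\in\pp\}$ lies in $C(S,\xi)$. Since $u\mapsto u\cdot v_i$ is a homeomorphism from $X$ onto $L_i$, proving density of $L_i$ in $C(S,\xi)$ is equivalent to proving $\{T^ir(T^n)x:r\in\pp\}$ is dense in $X$.

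For $i=0$ density is immediate from the hypothesis that $x$ is cyclic for $T^n$. For $i\ge 1$ one combines two facts: cyclicity of $T^n$ at $x$ gives density of $\{r(T^n)x\}$ in $X$, and dense range of $T$ gives (inductively, via continuity) density of $T^i(X)$ in $X$; applying the continuous map $T^i$ to a dense subset of $X$ then gives a subset of $X$ that is dense in $T^i(X)$, hence dense in $X$. This is the only step that uses the dense-range hypothesis, and it is the one place where I expect a careful argument is required; without dense range, the argument only localizes each $L_i$ to $T^i(X)\cdot v_i$, and the sum need not exhaust $X^n$.

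Once $L_i\subseteq C(S,\xi)$ is established for all $i$, the closed cyclic subspace contains $L_0+\cdots+L_{n-1}=X^n$, so $\xi$ is cyclic for $S$ and the lemma is proved. In the $n=2$ case this recovers Lemma~\ref{re2}, where $v_0=(1,1)$ and $v_1=(1,-1)$ give exactly the decomposition used there.
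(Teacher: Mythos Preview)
Your proposal is correct and follows essentially the same route as the paper's proof: both take the diagonal vector $(x,\dots,x)$ with $x$ cyclic for $T^n$, compute $S^k r(S^n)$ applied to it to land in the ``Fourier'' subspaces $L_k=\{(a,z^ka,\dots,z^{k(n-1)}a):a\in X\}$, use dense range of $T$ to get $\overline{\{T^kr(T^n)x:r\in\pp\}}=X$ and hence $L_k\subseteq C(S,\xi)$, and finish with the Vandermonde argument that the $L_k$ span $X^n$. Your DFT framing and explicit polynomial decomposition $p(w)=\sum w^ir_i(w^n)$ are a slightly cleaner packaging of the same computation.
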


\begin{proof} Let $x$ be a cyclic vector for $T^n$. Then
$L=\{r(T^n)x:r\in\pp\}$ is dense in $X$. Since $T$ has dense range,
the spaces $T(L),\dots,T^{n-1}(L)$ are also dense in $X$. It
suffices to verify that $u=(x,\dots,x)\in X^n$ is a cyclic vector
for $S$. Let $M=C(S,u)$, $0\leq k\leq n-1$ and $r\in \pp$. Then
$$
S^kr(S^n)u=(T^kr(T^n)x,z^kT^kr(T^n)x,\dots,z^{k(n-1)}T^kr(T^n)x)\in
M.
$$
Thus, $M$ contains the vectors of the shape
$(a,z^ka,\dots,z^{k(n-1)}a)$ for $a\in T^k(L)$ and $0\leq k\leq
n-1$. Since $M$ is closed and $T^k(L)$ is dense in $X$, we see that
$$
M\supseteq N_k=\{(a,z^ka,\dots,z^{k(j-1)}a):a\in X\}\quad\text{for
$0\leq k\leq n-1$.}
$$
Finally, the matrix $\{z^{kl}\}_{k,l=0}^{n-1}$ is invertible since
its determinant is a Van der Monde one. Invertibility of the latter
matrix implies that the union of $N_k$ for $0\leq k\leq n-1$ spans
$X^n$. Hence $M=X^n$ and therefore $u$ is a cyclic vector for $S$.
\end{proof}

Lemma~\ref{re3} shows that under the condition
$z_1^k={\dots}=z_n^k=1$, the requirements on the operator $T$ in
Theorem~\ref{zo} can be weakened. Namely, instead of supercyclicity
of $T$ it is enough to require cyclicity of $T^k$ and the density of
the range of $T$. For general $z_j$ this is however not true. For
instance, the Volterra operator $V\in L(L^2[0,1])$, $Vf(t)=\int_0^t
f(s)\,ds$ has dense range, all powers of $V$ are cyclic, while
$V\oplus 2V$ is non-cyclic \cite{msh}. The latter example also
provides means for an elementary proof of the fact that the Volterra
operator $V$ is not weakly supercyclic (= not supercyclic on
$L_2[0,1]$ with weak topology), which also follows from a rather
involved general theorem in \cite{msh}, providing a sufficient
condition for a bounded linear operator on a Banach space to be not
weakly supercyclic.

\begin{corollary}\label{wsc} The Volterra operator $V$ acting on
$L^2[0,1]$ is not weakly supercyclic.
\end{corollary}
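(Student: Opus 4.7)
The plan is to argue by contradiction: assume $V$ is weakly supercyclic, and derive that $V\oplus 2V$ must be cyclic in the norm topology, contradicting the cited fact from \cite{msh}.

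First I would view $V$ as a continuous linear operator on the infinite dimensional topological vector space $X=(L^2[0,1],w)$, where $w$ denotes the weak topology. Since $V$ is bounded on $L^2[0,1]$, it is weak-to-weak continuous, and $X$ is Hausdorff because $L^2[0,1]$ is locally convex. The assumption that $V$ is weakly supercyclic means exactly that $V$, regarded as an operator on $X$, is supercyclic in the sense used throughout the paper. I can then apply Theorem~\ref{zo} with $\K=\R$ (or $\C$), $n=2$, $z_1=1$, $z_2=2$: these are pairwise different and non-zero, so the theorem yields that $V\oplus 2V$ is cyclic on $X\times X=(L^2[0,1]\oplus L^2[0,1],w)$.

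The key bridge is then to upgrade weak cyclicity to norm cyclicity. If $u\in L^2[0,1]\oplus L^2[0,1]$ is a cyclic vector for $V\oplus 2V$ with respect to the weak topology, the set $E=\spann\{(V\oplus 2V)^n u:n\in\Z_+\}$ is a linear subspace that is weakly dense in $L^2[0,1]\oplus L^2[0,1]$. Since $E$ is convex, Mazur's theorem (convex sets have the same weak and norm closures in a locally convex space) gives that $E$ is also norm dense. Thus $u$ is a cyclic vector for $V\oplus 2V$ in the norm topology, i.e.\ $V\oplus 2V$ is cyclic on $L^2[0,1]\oplus L^2[0,1]$ in the usual sense. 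This contradicts the fact, recalled just before the corollary, that $V\oplus 2V$ is non-cyclic \cite{msh}, and completes the proof.

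I do not expect any substantial obstacle: all the nontrivial work has already been done in Theorem~\ref{zo} and in the result of \cite{msh}. The only point requiring a moment's care is the weak-to-norm passage for cyclicity, and this is handled by Mazur's theorem exactly because the cyclic manifold is a linear (hence convex) subspace.
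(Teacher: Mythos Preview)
Your argument is correct and follows the same overall scheme as the paper: contrapose Theorem~\ref{zo}, use that weak and norm cyclicity coincide via Mazur, and appeal to the non-cyclicity of $V\oplus 2V$. The one substantive difference is that the paper does not take the non-cyclicity of $V\oplus 2V$ as a black box from \cite{msh}; it proves it on the spot via the intertwining relation $2JV=V^*J$ for $Jf(x)=f\bigl(\tfrac{1-x}{2}\bigr)$, which forces the orbit of any $(f,g)$ under $V\oplus 2V$ to lie in the kernel of the nonzero functional $\Phi(u,v)=\langle u,Jg\rangle-\langle v,J^*f\rangle$. This is the ``elementary'' content advertised just before the corollary, and it makes the proof self-contained; your version is logically fine but relies on the external result you cite.
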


\begin{proof} Consider the operator $J:L_2[0,1]\to L_2[0,1]$,
$Jf(x)=f\left(\frac{1-x}2\right)$. It is easy to see that
$2JV=V^*J$. We show that $V\oplus 2V$ is not cyclic. Indeed, assume
that $(f,g)$ is a cyclic vector for $V\oplus 2V$. Then $f\neq 0$ and
$J^*f\neq 0$ since $J^*$ is injective. Using the equation
$2JV=V^*J$, it is easy to verify that the orbit $\{(V\oplus
2V)^n(f,g):n\in\Z_+\}$ is lying in the kernel of the continuous
functional $\Phi(u,v)=\langle u,Jg\rangle-\langle v,J^*f\rangle$ on
$H=L^2[0,1]\times L^2[0,1]$, where $\langle x,y\rangle=\int_0^1
x(t)y(t)\,dt$. Since $J^*f\neq 0$, we have $\Phi\neq 0$, which
contradicts cyclicity of $(f,g)$ for $V\oplus 2V$. Since cyclicity
of an operator on a Banach space is equivalent to cyclicity with
respect to the weak topology, $V\oplus 2V$ is non-cyclic on $H$ with
weak topology. By Theorem~\ref{zo}, $V$  is not weakly supercyclic.
\end{proof}

It seems to be an appropriate place to reproduce the following
question by Sophie Grivaux.

\begin{question} \label{q4} Let $T$ be a continuous linear operator
on a Banach space $X$ such that $T\oplus T$ is cyclic. Does it
follow that $T^2$ is cyclic?
\end{question}

\subsection{Cyclicity of direct sums of operators satisfying
the Supercyclicity Criterion}

The following important sufficient condition of supercyclicity is
provided in \cite{msa} for Banach space operators. In the general
setting it is a corollary of the following result of B\'es and Peris
\cite[Theorem~2.3 and Remark~2.6]{bp}. Recall that an infinite
family $\cal F$ of continuous maps from a topological space $X$ to a
topological space $Y$ is called {\it hereditarily universal} if each
infinite subfamily of $\cal F$ is universal.

\begin{thmbp} \it Let $\{T_n\}_{n\in\Z_+}$ be a sequence of
pairwise commuting continuous linear operators with dense range on a
separable Baire metrizable topological vector space $X$. Then the
following conditions are equivalent:

\smallskip

\noindent {\rm(p1)} \ The family $\{T_n\oplus T_n:n\in\Z_+\}$ is
universal;

\smallskip

\noindent {\rm(p2)} \ There is an infinite set $A\subseteq \Z_+$
such that $\{T_n:n\in A\}$ is hereditarily universal;

\smallskip

\noindent {\rm(p3)} \ There exist a strictly increasing sequence
$\{n_k\}$ of non-negative integers, dense subsets $E$ and $F$ of $X$
and maps $S_k:F\to X$ for $k\in\Z_+$ such that $T_{n_k}x\to 0$,
$S_ky\to 0$ and $T_{n_k}S_ky\to y$ as $k\to\infty$ for any $x\in E$
and $y\in F$.
\end{thmbp}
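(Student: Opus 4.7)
The plan is to close the cycle (p3) $\Rightarrow$ (p2) $\Rightarrow$ (p1) $\Rightarrow$ (p3), using throughout the Birkhoff transitivity criterion: a countable family of continuous self-maps on a separable Baire metrizable space is universal if and only if every pair of nonempty open sets $U, V$ is linked by some member ($T_n(U) \cap V \ne \varnothing$), in which case the set of universal vectors is a dense $G_\delta$.

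For (p3) $\Rightarrow$ (p2), I would show that $\{T_{n_k} : k \in \Z_+\}$ is hereditarily universal. Given any infinite $B \subseteq \{n_k\}$ and nonempty open $U, V \subseteq X$, choose $x \in E \cap U$ and $y \in F \cap V$ by density. The vector $z_k := x + S_k y$ lies in $U$ eventually, since $S_k y \to 0$, and linearity combined with (p3) gives $T_{n_k} z_k = T_{n_k} x + T_{n_k} S_k y \to 0 + y = y \in V$; hence $T_{n_k}(U) \cap V \ne \varnothing$ along $B$, verifying Birkhoff. For (p2) $\Rightarrow$ (p1), given $\{T_n : n \in A\}$ hereditarily universal and open rectangles $U_1 \times V_1, U_2 \times V_2 \subseteq X \times X$, the set $A_1 = \{n \in A : T_n(U_1) \cap U_2 \ne \varnothing\}$ must be infinite (otherwise $A \setminus A_1$ would contradict hereditary universality via the pair $(U_1, U_2)$); iterating, $A_2 = \{n \in A_1 : T_n(V_1) \cap V_2 \ne \varnothing\}$ is also infinite, and any $n \in A_2$ together with independent witnesses in $U_1$ and $V_1$ links the two rectangles under $T_n \oplus T_n$.

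The main technical step is (p1) $\Rightarrow$ (p3). Fix a translation-invariant metric $d$ on $X$ and, for each pair $(j, m) \in \N \times \N$, set
\[
\tilde O_{j, m} := \{(x, y) \in X \times X : \exists\, n \geq m,\ d(T_n x, 0) < 1/j \text{ and } T_n(B(0, 1/j)) \cap B(y, 1/j) \ne \varnothing\}.
\]
Each $\tilde O_{j, m}$ is open by continuity of each $T_n$, and it is dense: given any open rectangle $R \times S \subseteq X \times X$, apply Birkhoff for $\{T_n \oplus T_n\}$ to the source rectangle $R \times B(0, 1/j)$ and target $B(0, 1/j) \times S$ to obtain $n \geq m$ with some $z_1 \in R$ having $T_n z_1 \in B(0, 1/j)$ and some $z_2 \in B(0, 1/j)$ having $T_n z_2 \in S$; the point $(x, y) := (z_1, T_n z_2)$ then lies in $\tilde O_{j, m} \cap (R \times S)$. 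By Baire, $\bigcap_{j, m} \tilde O_{j, m}$ is a dense $G_\delta$; pick $(u, v)$ in this intersection that is simultaneously universal for $\{T_n \oplus T_n\}$ (both sets being dense $G_\delta$). A diagonal enumeration of pairs $(j, m)$ then yields a strictly increasing sequence $\{n_k\}$ together with vectors $w_k \to 0$ such that $T_{n_k} u \to 0$ and $T_{n_k} w_k \to v$. Take $E$ and $F$ to be the $\{T_n\}$-semigroup orbits of $u$ and $v$, i.e., the sets of all $T_{i_1} \cdots T_{i_\ell} u$ and $T_{i_1} \cdots T_{i_\ell} v$; both are dense (since they contain $\{T_n u\}$ and $\{T_n v\}$, which are dense by projecting the universal orbit of $(u, v)$) and $T_n$-invariant by commutativity. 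Define $S_k(T_{i_1} \cdots T_{i_\ell} v) := T_{i_1} \cdots T_{i_\ell} w_k$; commutativity delivers $T_{n_k}(T_{i_1} \cdots T_{i_\ell} u) = T_{i_1} \cdots T_{i_\ell}(T_{n_k} u) \to 0$, $S_k y \to 0$ by continuity of $T_{i_1} \cdots T_{i_\ell}$, and $T_{n_k} S_k y = T_{i_1} \cdots T_{i_\ell}(T_{n_k} w_k) \to T_{i_1} \cdots T_{i_\ell} v = y$.

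The main obstacle is the density of $\tilde O_{j, m}$ in $X \times X$: this is precisely where two-coordinate transitivity of $\{T_n \oplus T_n\}$ is indispensable, since the one-coordinate universality of $\{T_n\}$ alone yields the decay condition and the approximate-preimage condition only along possibly disjoint sequences of indices, and pairing them into a single $n$ requires the stronger hypothesis (p1). Commutativity of the family is equally essential for transporting the basepoint conditions at $u, v$ onto the dense semigroup orbits $E$ and $F$, while the dense range of each $T_n$ together with the Baire/separability/metrizability setup ensure that the relevant dense-$G_\delta$ intersections are nonempty and that Birkhoff is applicable.
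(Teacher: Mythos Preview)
The paper does not prove Theorem~BP; it is quoted from B\'es and Peris \cite[Theorem~2.3 and Remark~2.6]{bp} and used only as a black box in Section~7. There is therefore no in-paper proof to compare against.

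Your cycle (p3)$\Rightarrow$(p2)$\Rightarrow$(p1)$\Rightarrow$(p3) is the standard argument and is correct in substance. Two points deserve to be made explicit. First, in the density step for $\tilde O_{j,m}$ you need $n\geq m$, i.e.\ transitivity of the truncated family $\{T_n\oplus T_n:n\geq m\}$, and when you intersect $\bigcap_{j,m}\tilde O_{j,m}$ with the set of universal pairs you need the latter to be a \emph{dense} $G_\delta$, not merely nonempty. Both follow from the hypotheses you have not yet invoked, namely commutativity and dense range: if $(u,v)$ is universal for $\{T_n\oplus T_n\}$ then so is each $(T_mu,T_mv)$ (a continuous map with dense range sends dense sets to dense sets), and these points already form a dense set; density of universal pairs gives transitivity, and removing finitely many orbit points preserves density since $X\times X$ has no isolated points. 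Second, your $S_k$ is not literally well-defined if some $y\in F$ admits two representations as a word in the $T_i$ applied to $v$; but (p3) only asks for set-theoretic maps $S_k:F\to X$, so fixing one representative per $y\in F$ suffices. With these routine patches the proof goes through.
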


We say that a continuous linear operator $T$ on a separable Baire
metrizable topological vector space $X$ satisfies the {\it
Supercyclicity Criterion} \cite{msa} if there exist a strictly
increasing sequence $\{n_k\}_{k\in\Z_+}$ of positive integers, a
sequence $\{s_k\}_{k\in\Z_+}$ of positive numbers, dense subsets $E$
and $F$ of $X$ and maps $S_k:F\to X$ for $k\in\Z_+$ such that
$T^{n_k}S_ky\to y$, $s_kT^{n_k}x\to 0$ and $s_k^{-1}S_ky\to 0$ as
$k\to \infty$ for any $x\in E$ and $y\in F$. From Theorem~BP it
immediately follows that any operator $T$ satisfying the
Supercyclicity Criterion is supercyclic. Moreover it follows that
$T$ satisfies the Supercyclicity Criterion if and only if $T\oplus
T$ is supercyclic.

\begin{lemma}\label{sc1} Let $T_j$ for $1\leq j\leq k$ be continuous
linear operators on separable Baire metrizable topological vector
spaces $X_j$ all satisfying the Supercyclicity Criterion with the
same sequence $\{n_l\}_{l\in\Z_+}$. Then there exists an infinite
subset $A$ of $\N$ and sequences $\{s_{j,l}\}_{l\in A,\ 1\leq j\leq
n}$ of positive numbers such that the family ${\cal
F}=\{s_{1,l}T_1^{l}\oplus{\dots}\oplus s_{k,l}T_k^{l}:l\in A\}$ is
hereditarily universal.
\end{lemma}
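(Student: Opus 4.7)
\medskip

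The plan is to verify condition (p3) of Theorem BP on the product space $X = X_1\times\cdots\times X_k$ for the sequence of operators
\[
R_l = s_{1,l}T_1^{n_l}\oplus{\dots}\oplus s_{k,l}T_k^{n_l},\qquad l\in\Z_+,
\]
and then apply the equivalence (p3)$\iff$(p2) of Theorem BP to extract an infinite $A\subseteq\Z_+$ for which $\{R_l:l\in A\}$ is hereditarily universal. The positive scalars $s_{j,l}$ and the maps $S_{j,l}:F_j\to X_j$, together with dense sets $E_j,F_j\subseteq X_j$, are supplied coordinate-by-coordinate by the Supercyclicity Criterion for $T_j$ with the common sequence $\{n_l\}$.

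First I would set $E=E_1\times{\dots}\times E_k$ and $F=F_1\times{\dots}\times F_k$; these are dense in $X$. For $y=(y_1,\dots,y_k)\in F$ define
\[
\widetilde S_l y = (s_{1,l}^{-1}S_{1,l}y_1,\dots,s_{k,l}^{-1}S_{k,l}y_k).
\]
Then for $x=(x_1,\dots,x_k)\in E$,
\[
R_l x = (s_{1,l}T_1^{n_l}x_1,\dots,s_{k,l}T_k^{n_l}x_k)\to 0,\qquad \widetilde S_l y\to 0,
\]
and
\[
R_l\widetilde S_l y = (T_1^{n_l}S_{1,l}y_1,\dots,T_k^{n_l}S_{k,l}y_k)\to y
\]
as $l\to\infty$, by the three convergences provided by the Supercyclicity Criterion for each $T_j$. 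This verifies (p3) for the sequence $\{R_l\}_{l\in\Z_+}$ (with $n_k=k$, i.e., no further thinning needed yet).

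Next I would check the standing hypotheses of Theorem BP. Commutativity of the family $\{R_l\}$ is immediate: for any $l,m$,
\[
R_lR_m = \bigoplus_{j=1}^k s_{j,l}s_{j,m}T_j^{n_l+n_m} = R_mR_l,
\]
since each coordinate involves only powers of a single operator $T_j$ and positive scalars. For dense range, observe that each $T_j$ is supercyclic (the Supercyclicity Criterion implies supercyclicity), hence $T_j$ has dense range; since $T_j$ is continuous, by induction so has $T_j^{n_l}$, and therefore so does $R_l$ after multiplying each coordinate by a nonzero scalar.

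Finally, applying Theorem BP to the sequence $\{R_l\}_{l\in\Z_+}$, condition (p3) yields condition (p2): there is an infinite $A\subseteq\Z_+$ such that $\{R_l:l\in A\}$ is hereditarily universal. This is the desired family $\cal F$. The main (and essentially only) substantive point is recognising that the individual Supercyclicity Criteria for the $T_j$, when combined with a single common sequence $\{n_l\}$, assemble into condition (p3) for the product operators $R_l$; the rest is a routine verification of the commutativity and dense-range hypotheses that allow Theorem BP to be invoked.
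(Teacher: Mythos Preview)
Your argument is correct and follows essentially the same route as the paper: assemble the coordinatewise Supercyclicity Criterion data into dense sets $E,F\subseteq X$ and maps $\widetilde S_l$, verify condition~(p3) of Theorem~BP for the commuting dense-range family $\{R_l\}$, and extract the hereditarily universal subfamily via~(p2). The only cosmetic point is that the lemma is stated with exponents $l\in A$ rather than $n_l$, so one should reindex by setting $A'=\{n_l:l\in A\}$; the paper handles this with the phrase ``clearly this family has the required shape'', and your final sentence does the same implicitly.
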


\begin{proof} Let $\{n_l\}_{l\in\Z_+}$ be a strictly increasing sequence
of positive integers such that each $T_j$ satisfies the
Supercyclicity Criterion with this sequence. Then, for each
$j\in\{1,\dots,k\}$, we can pick a sequence
$\{s_{j,n_l}\}_{l\in\Z_+}$ of positive numbers, dense subsets $E_j$
and $F_j$ of $X_j$ and maps $S_{j,l}:F_j\to X_j$ for $l\in\Z_+$ such
that $T_j^{n_l}S_{j,l}y\to y$, $s_{j,n_l}T_j^{n_l}x\to 0$ and
$s_{j,n_l}^{-1}S_{j,l}y\to 0$ as $l\to \infty$ for any $x\in E_j$
and $y\in F_j$. Then $F=F_1\times{\dots}\times F_k$ and
$E=E_1\times{\dots}\times E_k$ are dense in $X=X_1\times
{\dots}\times X_k$. Let
$$
S_l=s_{1,n_l}^{-1}S_{1,n_l}\oplus{\dots}\oplus
s_{k,n_l}^{-1}S_{k,n_l}:F\to X\ \ \text{and}\ \
T_l=s_{1,n_l}T_1^{n_l}\oplus {\dots}\oplus s_{k,n_l}T_k^{n_l}\in
L(X).
$$
From the above properties of $s_{j,n_l}$ and $S_{j,l}$ it follows
that $T_lS_{l}y\to y$, $T_lx\to 0$ and $S_ly\to 0$ as $l\to \infty$
for any $x\in E$ and $y\in F$. It is also easy to see that $T_l$
have dense range and commute with each other. By Theorem~BP, there
is an infinite subset $B$ of $\Z_+$ such that the family $\{T_l:l\in
B\}$ is hereditarily universal. Clearly this family has the required
shape.
\end{proof}

The following result shows that if we restrict ourselves to
operators, satisfying the Supercyclicity Criterion, then cyclicity
of finite direct sums is satisfied not only for scalar multiples of
the same operator.

\begin{theorem}\label{f1} Let $T_j$ for $1\leq j\leq k$ be continuous
linear operators on separable Baire metrizable topological vector
spaces $X_j$ all satisfying the Supercyclicity Criterion with the
same sequence $\{n_l\}_{l\in\Z_+}$. Then the direct sum $T_1\oplus
{\dots}\oplus T_k$ is cyclic.
\end{theorem}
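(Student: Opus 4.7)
The plan is to induct on $k$, using Lemma~\ref{elem} with the $T$-invariant subspace $L=\{0\}\times\dots\times\{0\}\times X_k$. The base case $k=1$ is immediate, since an operator satisfying the Supercyclicity Criterion is supercyclic and hence cyclic. For the inductive step, my first move is to apply Lemma~\ref{sc1} to obtain an infinite set $A\subseteq\N$ and positive scalars $\{s_{j,l}\}_{l\in A,\,1\leq j\leq k}$ such that the family $\mathcal{F}=\{s_{1,l}T_1^l\oplus\dots\oplus s_{k,l}T_k^l:l\in A\}$ is hereditarily universal on $X=X_1\times\dots\times X_k$.

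Since for each $l\in A$ the $k$ scalars $s_{1,l},\dots,s_{k,l}$ realize one of the $k!$ possible total orderings, a pigeonhole argument produces an infinite $A^*\subseteq A$ on which a single ordering holds; after relabeling the operators I may assume $s_{1,l}\geq s_{2,l}\geq\dots\geq s_{k,l}$ for every $l\in A^*$, and the subfamily $\mathcal{F}|_{A^*}$ remains hereditarily universal. I frame the induction around the strengthened statement that every universal vector of such an ordered, hereditarily universal family is cyclic for the associated direct sum, and I fix any universal vector $v=(v_1,\dots,v_k)$ for $\mathcal{F}|_{A^*}$.

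To check the two conditions of Lemma~\ref{elem}, I observe first that the projection $(v_1,\dots,v_{k-1})$ of $v$ onto the first $k-1$ coordinates is universal for the projected family on $X_1\times\dots\times X_{k-1}$, which inherits the ordering $s_{1,l}\geq\dots\geq s_{k-1,l}$ and is itself hereditarily universal (projection under the continuous open coordinate map preserves density of each subfamily's orbit). By the strengthened induction hypothesis, $(v_1,\dots,v_{k-1})$ is cyclic for $T_1\oplus\dots\oplus T_{k-1}$, so the condition that $v+L$ be cyclic for the quotient operator is met. For the remaining condition $L\subseteq C(T,v)$, given $y_k\in X_k$ and $\varepsilon>0$ the hereditary universality of $\mathcal{F}|_{A^*}$ supplies $l\in A^*$ with $\|s_{j,l}T_j^l v_j\|<\varepsilon$ for every $j<k$ and $\|s_{k,l}T_k^l v_k-y_k\|<\varepsilon$. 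The single-term polynomial $p(z)=s_{k,l}z^l$ then yields $\|p(T_k)v_k-y_k\|<\varepsilon$ and, for $j<k$,
\[
\|p(T_j)v_j\|=\frac{s_{k,l}}{s_{j,l}}\,\|s_{j,l}T_j^l v_j\|\leq \|s_{j,l}T_j^l v_j\|<\varepsilon,
\]
where the key inequality $s_{k,l}\leq s_{j,l}$ comes from the chosen ordering. Hence $p(T)v$ is within $\varepsilon$ of $(0,\dots,0,y_k)$ in the product topology, giving $L\subseteq C(T,v)$; Lemma~\ref{elem} then closes the induction step.

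The main subtlety is the pigeonhole/reordering step: without a uniform ordering of the scalars, the ratios $s_{k,l}/s_{j,l}$ could be unbounded as $l$ varies in $A$, and a single-term polynomial would no longer be able to simultaneously annihilate the first $k-1$ coordinates while approximating $y_k$ in the last one. Using the fact that hereditary universality is preserved under passage to infinite subfamilies, one restricts to $A^*$ without sacrificing the approximation property for the chosen $v$, which is what lets the one-parameter polynomial approximation succeed and avoids the Vandermonde-type multi-term cancellations that would otherwise be needed.
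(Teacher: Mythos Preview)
Your proof is correct and follows the same overall architecture as the paper's---invoke Lemma~\ref{sc1}, pass to a structured infinite subfamily, show a coordinate block lies in the cyclic subspace, then quotient and induct via Lemma~\ref{elem}---but the mechanism you use to identify the block is different.

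The paper extracts, by compactness of the unit sphere in $\R^k$, a further infinite $A'\subseteq A$ along which the normalized vectors $s_l/\|s_l\|_\infty$ converge to some $s\in\R_+^k$ with $\|s\|_\infty=1$; after reordering so that $s_1\geq\dots\geq s_k$, it lets $m$ be the last index with $s_m\neq 0$ and shows that the whole block $N=X_1\times\dots\times X_m\times\{0\}^{k-m}$ lies in $C(S,x)$ by using the monomials $p_l(z)=\|s_{n_l}\|_\infty z^{n_l}$ together with the convergence $s_l/\|s_l\|_\infty\to s$. Your pigeonhole step replaces this limit extraction: by freezing a single ordering $s_{1,l}\geq\dots\geq s_{k,l}$ on $A^*$, you get the pointwise inequality $s_{k,l}/s_{j,l}\leq 1$ for $j<k$ directly, which lets the monomial $s_{k,l}z^l$ kill the first $k-1$ coordinates while hitting the last. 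So the paper may peel off several coordinates at once (and finishes in one step when $m=k$), whereas you always peel off exactly one; in exchange your argument avoids any limiting process and is slightly more elementary.

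One small presentational point: in a general metrizable topological vector space the translation-invariant metric need not be positively homogeneous, so the equality $\|p(T_j)v_j\|=(s_{k,l}/s_{j,l})\|s_{j,l}T_j^l v_j\|$ is not literally available. What you actually need (and have) is the inequality: working with a base of balanced neighborhoods of $0$, if $s_{j,l}T_j^l v_j\in W$ and $s_{k,l}/s_{j,l}\leq 1$ then $s_{k,l}T_j^l v_j\in W$. This is enough for the conclusion, so the argument stands.
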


\begin{proof} By Lemma~\ref{sc1}, there
exists an infinite subset $A$ of $\Z_+$ and positive numbers
$\{s_{j,l}\}_{l\in A,\ 1\leq j\leq k}$ such that the family ${\cal
F}=\{S_l=s_{1,l}T_1^{l}\oplus{\dots}\oplus s_{k,l}T_k^{l}:l\in A\}$
is hereditarily universal. Denote $s_l=(s_{1,l},\dots,s_{k,l})\in
\R_+^k$. Replacing $A$ by a smaller infinite subset of $\Z_+$, if
necessary, may assume that $s_l/\|s_l\|_\infty\to s\in\R_+^k$ as
$l\to\infty$, $l\in A$. Clearly $\|s\|_\infty=1$. Without loss of
generality, we can also assume that $1=s_1\geq s_2\geq{\dots}\geq
s_k$. Let $m\in\N$ be such that $1\leq m\leq k$, $s_m\neq 0$ and
$s_j=0$ if $m<j\leq k$. Let $x=(x_1,\dots,x_k)$ be a universal
vector for the family $\cal F$. It suffices to show that $x$ is a
cyclic vector for $S=T_1\oplus{\dots}\oplus T_k$. We shall use
induction with respect to $k$. For $k=1$, the statement is trivial.
Assume that $k\geq 2$ and it is true for the direct sums of less
than $k$ operators. Denote $X=X_1\times{\dots}\times X_k$. First, we
shall show that $N=\{u\in X:u_j=0\ \text{for}\ j>m\}$ is contained
in the cyclic subspace $C(S,x)$. Let $w\in N$. Since $x$ is
universal for $\cal F$, there exists a strictly increasing sequence
$\{n_l\}_{l\in\Z_+}$ of elements of $A$ such that
$s_{j,n_l}T_j^{n_l}x_j\to w_j/s_j$ if $1\leq j\leq m$ and
$s_{j,n_l}T_j^{n_l}x_j\to 0$ if $j>m$. Consider the polynomials
$p_l(z)=\|s_{n_l}\|_{\infty}z^{n_l}$. Since $s_k/\|s_k\|_\infty\to
s\in\R_+^n$ as $k\to\infty$, $k\in A$, we obtain $p_l(S)x\to
(w_1,\dots,w_m,0,\dots,0)=w$ as $l\to\infty$, $l\in A$. Hence $w\in
C(S,x)$ and therefore $N\subseteq C(S,x)$. Clearly $S(N)\subseteq
N$. On the other hand $X/N$ is naturally isomorphic to
$X_{m+1}\times{\dots}\times X_n$ and the quotient operator
$\widetilde S(u+N)=Su+N$, acting on $X/N$ is naturally similar to
$T_{m+1}\oplus{\dots}\oplus T_n$. From the induction hypothesis it
follows that $x+N$ is a cyclic vector for $\widetilde S$. According
to Lemma~\ref{elem}, $x$ is a cyclic vector for $S$.
\end{proof}

\subsection{Strongly $n$-supercyclic operators}

Recently Feldman \cite{fe1} has introduced the notion of an
$n$-supercyclic operator for $n\in\N\cup\{\infty\}$. A continuous
linear operator $T$ on a topological vector space $X$ is called {\it
$n$-supercyclic} for $n\in\N$ if there exists an $n$-dimensional
linear subspace $L$ of $X$ such that its orbit $\{T^nx:n\in\Z_+,\
x\in L\}$ is dense in $X$. Such a space $L$ is called an $n$-{\it
supercyclic subspace} for $T$. Clearly, $1$-supercyclicity coincides
with the usual supercyclicity. In \cite{fe1}, for any $n\in\N$,
$n\geq 2$, a bounded linear operator $T$ on $\ell_2$ is constructed,
which is $n$-supercyclic and not $(n-1)$-supercyclic. In \cite{fe2},
the question is raised whether powers of $n$-supercyclic operators
are $n$-supercyclic. The question remains open. It is worth
mentioning that the answer becomes affirmative if we replace
$n$-supercyclicity by a slightly stronger property.

Namely, if $X$ is a topological vector space of dimension $\geq
n\in\N$, then the set $\PP_n X$ of all $n$-dimensional linear
subspaces of $X$ can be endowed with the natural topology. That is
we consider the (open) subset $X_n$ of all linearly independent
$n$-tuples $x=(x_1,\dots,x_n)\in X^n$ with the topology induced from
$X^n$ and declare the map $\pi_n:X_n\to \PP_n X$,
$\pi_n(x)=\spann\{x_1,\dots,x_n\}$ continuous and open. Observe that
$\PP_n \K^m$  for $m\geq n$ is the classical Grassmanian manifold.
We say that $L\in \PP_n X$ is a {\it strongly $n$-supercyclic}
subspace for $T\in L(X)$ if each $T^k(L)$ is $n$-dimensional and the
sequence $\{T^k(L):k\in\Z_+\}$ is dense in $\PP_n X$. An operator is
{\it strongly $n$-supercyclic} if it has a strongly $n$-supercyclic
subspace. Clearly, strong $1$-supercyclicity is equivalent to
supercyclicity and strong $n$-supercyclicity implies
$n$-supercyclicity. The advantage of strong $n$-supercyclicity lies
in the fact that it reduces to universality of the self-map of
$\PP_n X$ induced by $T$. Thus, using connectedness of $\PP_n X$ we
can prove that the powers of a strongly $n$-supercyclic operators
are strongly $n$-supercyclic by means of applying
Proposition~\ref{l1} in pretty much the same way as we did it for
supercyclic operators. It is worth noting that $n$-supercyclic
operators, constructed by Feldman in \cite{fe1}, are in fact
strongly $n$-supercyclic. This leads to the following question.

\begin{question}\label{q2} Are $n$-supercyclicity and strong
$n$-supercyclicity equivalent?
\end{question}

We would also like to reproduce the following interesting question
raised in \cite{fe1,fe2}.

\begin{question}\label{q3} Let $n\in \N$ and $T$ be an $n$-supercyclic operator
on a complex topological vector space $X$ such that
$\sigma_p(T^*)=\varnothing$. Is it true that $T$ is cyclic?
\end{question}

\subsection{Supercyclic operators with non-empty point spectrum}

Our final remark concerns the following claim made in \cite{leon}:

\begin{itemize}
\item[(L)] any supercyclic operator $T$ on a complex Banach space $X$
satisfying $\sigma_p(T^*)=\{z\}$ with $z\in\C\setminus\{0\}$ is
similar to the operator of the shape $z(S\oplus I_{\C})$, where $S$
is a hypercyclic operator and $I_{\C}$ is the identity operator on
the one-dimensional space $\C$.
\end{itemize}

If (L) was true, we could have significantly simplified the proof of
the characterization of $\R_+$-supercyclicity in Section~\ref{rsup}.
Unfortunately, this statement is false. It will be shown along with
a characterization of supercyclic operators with non-empty
$\sigma_p(T^*)$.

If $T$ is a supercyclic operator on a complex topological vector
space, then by Theorem~W, either $\sigma_p(T^*)=\varnothing$ or
$\sigma_p(T^*)=\{z\}$ with $z\in\C\setminus\{0\}$. In the latter
case, there is a non-zero $f\in X^*$ such that $T^* f=zf$. It is
clear that $Y=\ker f$ if a closed invariant subspace for $T$ of
codimension $1$. Pick any $e\in X$ such that $f(e)=1$. Then
$X=Y\oplus \langle e\rangle\simeq Y\times\C$. Since $T^* f=zf$, we
have $Te=z(e+u)$, where $u\in Y$. Thus, $T$ is naturally similar to
the operator $zS_u\in L(Y\times \C)$, where $S_u(y,t)=(Sy+tu,t)$ and
$S=z^{-1}T\bigr|_{Y}\in L(Y)$. Thus, any supercyclic operator is
similar to a  scalar multiple of an operator of the shape $S_u$. It
remains to figure out when an operator $S_u$ is supercyclic.

\begin{lemma}\label{supsup1} Let $Y$ be a topological vector space $S\in
L(Y)$, $u\in Y$ and
\begin{equation} \label{su}
S_u\in L(Y\times \K),\quad S_u(y,t)=(Sy+tu,t).
\end{equation}
Then $(x,s)$ is a supercyclic vector for $S_u$ if and only if $s\neq
0$ and $u-s^{-1}(I-S)x$ is a universal vector for the family
$\{p_n(S):n\in\N\}$, where
\begin{equation} \label{pn}
p_n(z)=\sum_{j=0}^{n-1} z^j.
\end{equation}
\end{lemma}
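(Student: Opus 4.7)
The plan is to compute $S_u^n$ explicitly, apply an algebraic simplification that isolates a single vector $v$, and then use a homeomorphism to reduce density in $Y \times \K$ to density in $Y$.

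First, an easy induction on $n$, using $p_{n+1}(z) = 1 + z\, p_n(z)$, gives $S_u^n(y, t) = (S^n y + t\, p_n(S) u,\; t)$, and hence
$$c S_u^n(x, s) \;=\; \bigl(c S^n x + cs\, p_n(S) u,\; cs\bigr).$$
Since the second coordinate equals $cs$, density of the orbit in $Y \times \K$ already forces $s \ne 0$, which I assume henceforth.

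Second, and this is the decisive step, the polynomial identity $(1-z) p_n(z) = 1 - z^n$ gives $S^n = I - p_n(S)(I - S)$, so
$$S^n x + s\, p_n(S) u \;=\; x + s\, p_n(S) v, \qquad v \,:=\, u - s^{-1}(I - S)x.$$
Setting $\mu = cs$ (so $c = \mu/s$), the orbit of $(x,s)$ under $\{c S_u^n\}$ takes the form $\{((\mu/s)x + \mu\, p_n(S) v,\; \mu) : \mu \in \K,\ n \in \Z_+\}$. Recognizing this substitution, which amounts to guessing the correct vector $v$ from the shape of $S_u$, is the conceptual heart of the lemma.

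Third, on the open subset $Y \times (\K \setminus \{0\})$ the map $\Psi(y, \mu) := (y/\mu - x/s,\; \mu)$ is a homeomorphism, and a direct check shows it carries each orbit point with $\mu \ne 0$ to $(p_n(S) v,\, \mu)$. Since $\K \setminus \{0\}$ is dense in $\K$, and the sole orbit point with second coordinate $0$ (namely $(0,0)$) lies in the closure of any non-trivial slice, density of the orbit in $Y \times \K$ is equivalent to density of its $\Psi$-image $\{p_n(S) v : n \in \Z_+\} \times (\K \setminus \{0\})$ in $Y \times (\K \setminus \{0\})$, hence to density of $\{p_n(S) v : n \in \Z_+\}$ in $Y$. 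As $p_0(S) = 0$ contributes only the single point $0 \in Y$, this is equivalent to $v$ being universal for $\{p_n(S) : n \in \N\}$.

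The main difficulty is conceptual rather than technical: spotting the algebraic substitution that replaces $(x, s, u)$ by the single vector $v$. The remaining verifications are routine exercises in topological-vector-space continuity, without appeal to a norm.
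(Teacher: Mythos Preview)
Your proof is correct and follows essentially the same route as the paper: compute $S_u^n$ explicitly, use the identity $(1-z)p_n(z)=1-z^n$ to rewrite $S^n x + s\,p_n(S)u = x + s\,p_n(S)v$ with $v=u-s^{-1}(I-S)x$, and reduce density in $Y\times\K$ to density of $\{p_n(S)v:n\in\N\}$ in $Y$. The only cosmetic difference is that the paper first normalizes $(x,s)$ to $(s^{-1}x,1)$ and then uses a translation by $y=s^{-1}x$, whereas you carry $s$ along and encode the same normalization in the explicit homeomorphism $\Psi(y,\mu)=(y/\mu-x/s,\mu)$ of $Y\times(\K\setminus\{0\})$.
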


\begin{proof} If $s=0$, then $O(S_u,(x,s))$ is contained in
$Y\times\{0\}$ and $(x,s)$ can not even be a cyclic vector for
$S_u$. Assume now that $s\neq 0$. Then $(x,s)$ is a supercyclic
vector for $S_u$ if and only if $(y,1)$ is a supercyclic vector for
$S_u$, where $y=s^{-1}x$. Direct calculation shows that for any
$n\in\N$,
$$
S_u^n(y,1)=(S^ny+p_n(S)u,1)=(y+p_n(S)(u-(I-S)y),1).
$$
It immediately follows that $\{zS_u^n(y,1):n\in\Z_+,\ z\in\K\}$ is
dense in $Y\times \K$ if and only if $\{y+p_n(S)(u-(I-S)y):n\in\N\}$
is dense in $Y$. Since the translation by $y$ is a homeomorphism
from $Y$ to itself, the latter happens if and only if
$\{p_n(S)(u-(I-S)y):n\in\N\}$ is dense in $Y$. Thus, $(y,1)$ is a
supercyclic vector for $S_u$ if and only if $u-(I-S)y$ is a
universal vector for $\{p_n(S):n\in\N\}$. It remains to recall that
$y=s^{-1}x$.
\end{proof}

\begin{corollary}\label{supsup2} Let $Y$ be a topological vector space $S\in
L(Y)$, $u\in Y$ and $S_u\in L(Y\times \K)$  be defined by
$(\ref{su})$. Then $S_u$ is supercyclic if and only if the coset
$u+(I-S)(Y)$ contains a universal vector for the family
$\{p_n(S):n\in\N\}$, where $p_n$ are polynomials defined in
$(\ref{pn})$.
\end{corollary}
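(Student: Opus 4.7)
The plan is to derive Corollary \ref{supsup2} directly from Lemma \ref{supsup1} by a simple re-parametrization argument. Recall that the operator $S_u$ is supercyclic precisely when it admits at least one supercyclic vector. By Lemma \ref{supsup1}, a pair $(x,s)\in Y\times\K$ is supercyclic for $S_u$ if and only if $s\neq 0$ and the vector $u-s^{-1}(I-S)x$ is universal for the family $\{p_n(S):n\in\N\}$. So supercyclicity of $S_u$ is equivalent to the existence of some $s\in\K\setminus\{0\}$ and some $x\in Y$ with $u-s^{-1}(I-S)x\in\uu(\{p_n(S):n\in\N\})$.

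The key observation is that, as $(x,s)$ ranges over $Y\times(\K\setminus\{0\})$, the expression $u-s^{-1}(I-S)x = u+(I-S)(-s^{-1}x)$ simply traces out the coset $u+(I-S)(Y)$. Indeed, for any fixed $s\neq 0$ the map $x\mapsto -s^{-1}x$ is a bijection of $Y$, hence $\{u-s^{-1}(I-S)x:x\in Y\}=u+(I-S)(Y)$; conversely, every element of $u+(I-S)(Y)$ is visibly of this form (take $s=1$). Consequently, the existence of a supercyclic pair $(x,s)$ for $S_u$ is equivalent to the coset $u+(I-S)(Y)$ meeting $\uu(\{p_n(S):n\in\N\})$, which is exactly the condition in the statement.

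There is no genuine obstacle here: the entire content has been packed into Lemma \ref{supsup1}, and Corollary \ref{supsup2} is a book-keeping rephrasing that replaces the two-parameter existence condition by a cleaner geometric condition on a single affine subset. The only point that requires a line of care is noting that the case $s=0$ can be discarded (it is already excluded by Lemma \ref{supsup1}, since orbits of vectors in $Y\times\{0\}$ lie in $Y\times\{0\}$), so restricting to $s\neq 0$ and then normalizing to $s=1$ costs nothing.
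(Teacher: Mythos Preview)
Your argument is correct and is exactly the intended derivation: the paper states Corollary~\ref{supsup2} as an immediate consequence of Lemma~\ref{supsup1} without further proof, and your reparametrization $u-s^{-1}(I-S)x=u+(I-S)(-s^{-1}x)$ is precisely the one-line observation that makes the implication transparent.
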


It becomes a natural question to study universality of the family
$\{p_n(S):n\in\N\}$.

\begin{lemma}\label{supsup3} Let $Y$ be a topological vector space,
$S\in L(Y)$ and ${\cal F}=\{p_n(S):n\in\N\}$, where $p_n$ are
polynomials defined by $(\ref{pn})$. Then $\cal F$ is universal if
and only if $S$ is hypercyclic. Moreover,
$(I-S)(\uu(S))\subseteq\uu({\cal F})\subseteq \uu(S)$.
\end{lemma}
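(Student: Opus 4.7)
The whole lemma hinges on the factorization identity $p_n(z)(1-z)=1-z^n$, which translates operator-theoretically into
$$
p_n(S)(I-S) = (I-S)p_n(S) = I - S^n.
$$
With this in hand the first inclusion $(I-S)(\uu(S))\subseteq\uu(\F)$ is immediate: if $x\in\uu(S)$ and $u=(I-S)x$, then $p_n(S)u=(I-S^n)x=x-S^nx$, which is the image of the dense set $\{S^nx\}$ under the homeomorphism $v\mapsto x-v$ of $Y$, hence dense. This already shows that hypercyclicity of $S$ implies universality of $\F$.

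For the harder inclusion $\uu(\F)\subseteq\uu(S)$, fix $y\in\uu(\F)$ and rewrite the identity as $S^ny = y-(I-S)p_n(S)y$. Given a nonempty open $U\subseteq Y$, I want an $n$ with $S^ny\in U$, i.e.\ $(I-S)p_n(S)y\in y-U$. Since the preimage $(I-S)^{-1}(y-U)$ is open in $Y$, density of $\{p_n(S)y:n\in\N\}$ will supply such an $n$ \emph{provided} this preimage is nonempty, i.e.\ provided $(I-S)(Y)$ meets $y-U$. So the core of the argument reduces to showing that universality of $\F$ forces $(I-S)(Y)$ to be dense in $Y$. This is the main obstacle, since without local convexity we cannot simply invoke Hahn--Banach to rule out a fixed vector of $S^*$.

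To sidestep local convexity I pass to the quotient: let $Z=\overline{(I-S)(Y)}$, which is a closed $S$-invariant subspace (since $S$ commutes with $I-S$). On the Hausdorff quotient TVS $Y/Z$ the induced operator $\widetilde S$ satisfies $\widetilde S = I$, so $p_n(\widetilde S)=nI$. Projecting the dense orbit $\{p_n(S)y\}$ down gives that $\{n(y+Z):n\in\N\}$ is dense in $Y/Z$. But this sequence lies in the one-dimensional subspace $\K\cdot(y+Z)$, which is finite-dimensional and hence closed, so $Y/Z$ is at most one-dimensional; and within $\K$ the sequence $\{n\zeta:n\in\N\}$ is never dense for any $\zeta\in\K$. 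The only way out is $Y/Z=\{0\}$, i.e.\ $(I-S)(Y)$ is dense in $Y$, and the preceding paragraph finishes the proof that $y\in\uu(S)$.

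Finally, the equivalence ``$\F$ universal iff $S$ hypercyclic'' follows formally: one direction is immediate from $(I-S)(\uu(S))\subseteq\uu(\F)$, and the other from $\uu(\F)\subseteq\uu(S)$, so picking any $y\in\uu(\F)$ yields $y\in\uu(S)$.
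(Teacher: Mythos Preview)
Your proof is correct and follows essentially the same route as the paper: both use the factorization $p_n(S)(I-S)=I-S^n$, both reduce the inclusion $\uu(\F)\subseteq\uu(S)$ to showing that $(I-S)(Y)$ is dense, and both obtain the latter by passing to the quotient $Y/\overline{(I-S)(Y)}$ on which $\widetilde S=I$ and hence $p_n(\widetilde S)=nI$ is obviously non-universal on a nontrivial space. The only cosmetic difference is in deducing $\uu(\F)\subseteq\uu(S)$ once density of $(I-S)(Y)$ is known: the paper observes directly that $(I-S)x\in\uu(\F)$ whenever $x\in\uu(\F)$ (since a continuous dense-range operator sends dense sets to dense sets) and then reads off density of $\{x-S^nx\}$, whereas you argue via the open preimage $(I-S)^{-1}(y-U)$; these are equivalent.
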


\begin{proof} If $S$ is hypercyclic, then by Theorem~W, $I-S$ has
dense range. The same holds true if $\cal F$ is universal. Indeed,
assume that there is $x\in\uu({\cal F})$  and $I-S$ has non-dense
range. Then the closure $Z$ in $Y$ of $I-S(Y)$ is invariant for $S$
and $Z\neq Y$. Consider the operator $\widetilde S:Y/Z\to Y/Z$,
$\widetilde S(y+Z)=Sy+Z$. Clearly $x+Z$  is universal for ${\cal
A}=\{p_n(\widetilde S):n\in\N\}$. On the other hand, from the
definition of $Z$ it follows that $\widetilde S$ is the identity
operator on $Y/Z$ and therefore ${\cal A}=\{nI:n\in\N\}$. The latter
system is obviously non-universal. This contradiction shows that
$I-S$ has dense range if $\cal F$ is universal. Thus, we can assume
from the beginning that $(I-S)(Y)$ is dense in $Y$. It follows that
if $x\in\uu({\cal F})$, then $(I-S)x\in \uu({\cal F})$. Since
$p_n(S)(I-S)x=x-S^nx$, universality of $(I-S)x$ for $\cal F$ implies
that $x$ is hypercyclic for $S$. Hence universality of $\cal F$
implies hypercyclicity of $S$ and $\uu({\cal F})\subseteq \uu(S)$.
Now if $x\in \uu(S)$, then the sequence $p_n(S)(I-S)x=x-S^nx$ is
dense in $X$ and therefore $(I-S)x\in \uu({\cal F})$. Thus
hypercyclicity of $S$ implies universality of $\cal F$ and
$(I-S)(\uu(S))\subseteq\uu({\cal F})$. \end{proof}

\begin{lemma}\label{supsup4} Let $Y$ be a topological vector space, $S\in
L(Y)$, $u\in Y$ and $S_u\in L(Y\times \K)$  be defined by
$(\ref{su})$. Assume also that $1\notin\sigma_p(S^*)$. Then $S_u$ is
similar to $S_0=S\oplus I_\K$ if and only if $u\in (I-S)(Y)$.
\end{lemma}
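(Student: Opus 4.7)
The plan is to attack each implication separately, starting with the easier one, and use the hypothesis $1\notin\sigma_p(S^*)$ only in the converse direction.

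For the ($\Leftarrow$) direction, assume $u=(I-S)w$ for some $w\in Y$. I would write down an explicit similarity: the ``shear'' $A\in L(Y\times\K)$ defined by $A(y,t)=(y-tw,t)$, which is a topological isomorphism with inverse $(y,t)\mapsto(y+tw,t)$. A short direct calculation gives
$$
AS_u(y,t)=(Sy+tu-tw,t)=(Sy-tSw,t)=S_0A(y,t),
$$
where the middle equality uses $u-w=-Sw$. This proves $AS_u=S_0A$.

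For the ($\Rightarrow$) direction, suppose $A\in L(Y\times\K)$ is a topological isomorphism with $AS_u=S_0A$. Every continuous linear map $A:Y\times\K\to Y\times\K$ decomposes uniquely as $A(y,t)=(By+tv,\phi(y)+ct)$, where $B\in L(Y)$, $v\in Y$, $\phi\in(Y\times\K)^*$ restricted to $Y$ (so $\phi\in$ continuous linear functionals on $Y$), and $c\in\K$. Computing both sides of $AS_u=S_0A$ and matching the four resulting components (the $Y$- and $\K$-parts of the images of $(y,0)$ and $(0,t)$) yields the system
\begin{equation*}
BS=SB,\qquad Bu+v=Sv,\qquad \phi\circ S=\phi,\qquad \phi(u)=0.
\end{equation*}
The third equation says $\phi\in\ker(S^*-I)$, and the assumption $1\notin\sigma_p(S^*)$ forces $\phi=0$. (The fourth equation is then automatic.)

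The key remaining step, which I expect to be the only mildly delicate point, is to invoke invertibility of $A$ to transfer the second equation $Bu=-(I-S)v$ from a statement about $Bu$ into a statement about $u$. With $\phi=0$, the map $A$ becomes $A(y,t)=(By+tv,ct)$, and its being an isomorphism of $Y\times\K$ forces $c\neq 0$ and $B$ to be a topological isomorphism of $Y$. The first equation $BS=SB$ then gives $B^{-1}(I-S)=(I-S)B^{-1}$, so applying $B^{-1}$ to $Bu=-(I-S)v$ produces
$$
u=-B^{-1}(I-S)v=-(I-S)(B^{-1}v)=(I-S)(-B^{-1}v)\in(I-S)(Y),
$$
which is the desired conclusion. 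The main obstacle is really just being careful that similarity in $L(Y\times\K)$ means a topological isomorphism and that the decomposition of $A$ into $(B,v,\phi,c)$ is valid for an arbitrary topological vector space $Y$; once those are in hand, the algebraic manipulation above is immediate.
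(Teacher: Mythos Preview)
Your proof is correct. The forward implication is identical to the paper's (same shear map, same one-line check). For the converse, however, you take a genuinely different route from the paper.

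The paper argues as follows: since $1\notin\sigma_p(S^*)$, the space $\ker(S_0^*-I)$ is one-dimensional, spanned by the functional $f_0(y,t)=t$; one checks $f_0\in\ker(S_u^*-I)$ as well, and the intertwining relation on duals forces $f_0\circ\Lambda$ to be a scalar multiple of $f_0$, i.e.\ $\Lambda$ preserves $Y\times\{0\}$. Consequently $\Lambda(0,1)\notin Y\times\{0\}$, and since $S_0(0,1)=(0,1)$ this vector is carried to a fixed point $(x,t)$ of $S_u$ with $t\neq0$; normalizing $t=1$, the equation $S_u(x,1)=(x,1)$ reads $Sx+u=x$, giving $u=(I-S)x$ directly.

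Your approach instead writes $A$ in block form $(B,v,\phi,c)$, derives the four relations, uses $1\notin\sigma_p(S^*)$ to kill $\phi$, then exploits invertibility of $A$ to get that $B$ is a topological isomorphism commuting with $S$, and finally conjugates the relation $Bu=-(I-S)v$ by $B^{-1}$. This is a perfectly valid and self-contained calculation; it is slightly longer because you must verify that $B$ is an isomorphism and invoke $BS=SB$ at the end, whereas the paper sidesteps both by transporting a single eigenvector. On the other hand, your method makes every piece of the similarity explicit, which is arguably more transparent. Both proofs use the hypothesis $1\notin\sigma_p(S^*)$ at the same conceptual point (to force the $\K$-component of $A$ restricted to $Y$ to vanish, equivalently to make $Y\times\{0\}$ invariant).
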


\begin{proof} First, assume that $u\in (I-S)(Y)$. Then $u=v-Sv$ for
some $v\in Y$. Consider the operator $\Lambda\in L(Y\times \K)$,
$\Lambda(x,s)=(x+sv,s)$. Clearly $\Lambda$ is invertible and
$\Lambda^{-1}\in L(Y\times \K)$, $\Lambda^{-1}(x,s)=(x-sv,s)$. It is
easy to see that $\Lambda^{-1}S_u\Lambda=S_0$. Thus, $S_u$  is
similar to $S_0$.

Assume now that $S_u$ is similar to $S_0$. That is, there exists
$\Lambda\in L(Y\times\K)$ such that $\Lambda$ is invertible,
$\Lambda^{-1}$ is continuous and $\Lambda^{-1}S_u\Lambda=S_0$. Since
$\sigma_p(S^*)=\varnothing$, $\ker(S_0^*-I)$ is the one-dimensional
space spanned by the functional $f_0\in (Y\times \K)^*$,
$f_0(y,t)=t$. Since $(S_u^*-I)f_0=0$, we see that $\ker
f_0=Y\times\{0\}$ must be $\Lambda$-invariant. Since
$S_0(0,1)=(0,1)$ and $(0,1)\notin Y\times\{0\}$, $\Lambda_u$ must
have an eigenvector $(x,t)\notin Y\times\{0\}$ corresponding to the
eigenvalue $1$. Since $t\neq 0$, we can, without loss of generality,
assume that $t=1$. Thus, $S_u(x,1)=(x,1)$. It follows that $Sx+u=x$.
That is, $u=(I-S)x\in (I-S)(X)$.
\end{proof}

\begin{corollary}\label{supsup5} Let $Y$ be a topological vector space $S\in
L(Y)$, $u\in Y$ and $S_u\in L(Y\times \K)$  be defined by
$(\ref{su})$. If $u$ is universal for $\{p_n(S):n\in\N\}$, where
$p_n$ are polynomials defined by $(\ref{pn})$ and $u\notin
(I-S)(Y)$, then $S_u$ is supercyclic and not similar to $S_0=S\oplus
I_\K$.
\end{corollary}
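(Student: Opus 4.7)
The proof should be quite short since almost all the work has been done in the preceding lemmas. The plan is to verify supercyclicity of $S_u$ directly from Corollary~\ref{supsup2}, and then to invoke Lemma~\ref{supsup4} to rule out similarity to $S_0$, after first checking the hypothesis $1\notin\sigma_p(S^*)$ required by that lemma.

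First I would handle supercyclicity. By Corollary~\ref{supsup2}, $S_u$ is supercyclic provided the affine coset $u+(I-S)(Y)$ contains some universal vector for the family $\{p_n(S):n\in\N\}$. But $u$ itself belongs to that coset (since $0\in(I-S)(Y)$), and by hypothesis $u$ is such a universal vector. So supercyclicity of $S_u$ is immediate.

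Next I would verify the spectral hypothesis needed for Lemma~\ref{supsup4}. Since the family $\{p_n(S):n\in\N\}$ is universal (it has $u$ as a universal element), the argument in the proof of Lemma~\ref{supsup3} shows that $(I-S)(Y)$ is dense in $Y$: otherwise, passing to the quotient by the closure $Z$ of $(I-S)(Y)$, the induced operator $\widetilde S$ is the identity on $Y/Z$ and $\{p_n(\widetilde S)\}=\{nI\}$ cannot be universal. Density of $(I-S)(Y)$ forces $1\notin\sigma_p(S^*)$, because any non-zero $f\in Y^*$ with $S^*f=f$ satisfies $f\bigl((I-S)(Y)\bigr)=\{0\}$, hence $f=0$ by continuity and density, a contradiction.

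Finally, with $1\notin\sigma_p(S^*)$ in hand, Lemma~\ref{supsup4} applies and asserts that $S_u$ is similar to $S_0=S\oplus I_\K$ if and only if $u\in(I-S)(Y)$. The assumption $u\notin(I-S)(Y)$ therefore forces $S_u$ not to be similar to $S_0$, completing the proof. There is no real obstacle here: the substance is entirely packaged in Corollary~\ref{supsup2}, Lemma~\ref{supsup3}, and Lemma~\ref{supsup4}, and the corollary is essentially a bookkeeping combination of these results together with the elementary observation that density of the range of $I-S$ rules out $1$ from the point spectrum of $S^*$.
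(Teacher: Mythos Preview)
Your proof is correct and follows essentially the same route as the paper: supercyclicity from Corollary~\ref{supsup2}, then Lemma~\ref{supsup4} for non-similarity once $1\notin\sigma_p(S^*)$ is checked. The only cosmetic difference is that the paper obtains $\sigma_p(S^*)=\varnothing$ by first invoking Lemma~\ref{supsup3} to conclude $S$ is hypercyclic and then appealing to Theorem~W, whereas you extract the density of $(I-S)(Y)$ directly from the argument inside Lemma~\ref{supsup3} and deduce $1\notin\sigma_p(S^*)$ by the elementary duality observation; both are equally short and your version avoids the detour through hypercyclicity.
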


\begin{proof} Supercyclicity of $S_u$ follows from
Corollary~\ref{supsup2}. By Lemma~\ref{supsup3}, $S$ is hypercyclic
and therefore by Theorem~S, $\sigma_p(S^*)=\varnothing$. By
Lemma~\ref{supsup4}, $S_u$ is not similar to $S_0=S\oplus I_\K$.
\end{proof}

\begin{corollary}\label{supsup6} Let $Y$ be a Baire separable metrizable
topological vector space and $S\in L(Y)$ be a hypercyclic operator
such that $(I-S)(Y)\neq Y$. Then there is $u\in Y$ such that the
operator $S_u$ defined by $(\ref{su})$ is supercyclic and not
similar to $S_0=S\oplus I_\K$.
\end{corollary}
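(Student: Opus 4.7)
The plan is to invoke Corollary~\ref{supsup5}: it suffices to exhibit $u\in Y$ that is universal for the family ${\cal F}=\{p_n(S):n\in\N\}$ and does not belong to $(I-S)(Y)$, for then Corollary~\ref{supsup5} yields $S_u$ supercyclic and not similar to $S_0$. Thus the whole task reduces to showing $\uu({\cal F})\setminus(I-S)(Y)\neq\varnothing$.

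As a first step I would note that $\uu({\cal F})$ is a dense $G_\delta$ subset of $Y$. Non-emptiness follows from Lemma~\ref{supsup3}: since $S$ is hypercyclic, $\uu(S)\neq\varnothing$ and $(I-S)(\uu(S))\subseteq\uu({\cal F})$. A standard Birkhoff transitivity argument then applies because ${\cal F}$ is countable and $Y$ is Baire, separable and metrizable: writing $\uu({\cal F})=\bigcap_k\bigcup_n p_n(S)^{-1}(U_k)$ for a countable base $\{U_k\}$ of non-empty open sets of $Y$, each set $\bigcup_n p_n(S)^{-1}(U_k)$ is open, and it is dense because the existence of a universal vector together with continuity forces it to meet every non-empty open subset of $Y$.

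The main step, and the only nontrivial point, is to show that $\uu({\cal F})\not\subseteq M$, where $M=(I-S)(Y)$. I would argue by contradiction via a translation trick. Suppose $\uu({\cal F})\subseteq M$ and pick $v\in Y\setminus M$, which is afforded by the hypothesis $M\neq Y$. Translation by $v$ is a self-homeomorphism of $Y$, so $\uu({\cal F})+v$ is another dense $G_\delta$. By the Baire property the two dense $G_\delta$ sets $\uu({\cal F})$ and $\uu({\cal F})+v$ must meet; choose $w$ in their intersection. Then $w\in\uu({\cal F})\subseteq M$ and $w-v\in\uu({\cal F})\subseteq M$, so $v=w-(w-v)\in M-M=M$, contradicting the choice of $v$. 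Hence some $u\in\uu({\cal F})$ lies outside $(I-S)(Y)$, and Corollary~\ref{supsup5} finishes the proof. The crux is the translation step: linearity of $M$ makes the coset $M+v$ disjoint from $M$ whenever $v\notin M$, and Baire category forces the two dense $G_\delta$'s to meet, yielding the desired contradiction.
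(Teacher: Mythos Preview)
Your proof is correct and follows the same route as the paper: show that $\uu({\cal F})$ is a dense $G_\delta$, use the translation-by-$v$ trick together with linearity of $(I-S)(Y)$ to rule out $\uu({\cal F})\subseteq(I-S)(Y)$, and then apply Corollary~\ref{supsup5}. One small point worth tightening: for a general countable family the existence of a single universal vector does not by itself force each $\bigcup_n p_n(S)^{-1}(U_k)$ to be dense, but here density is immediate from Lemma~\ref{supsup3}, since $(I-S)(\uu(S))\subseteq\uu({\cal F})$, $\uu(S)$ is dense, and $I-S$ has dense range.
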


\begin{proof} Since $S$ is hypercyclic, Lemma~\ref{supsup3} implies
that the family ${\cal F}=\{p_n(S):n\in \N\}$ with $p_n$ being the
polynomials defined in (\ref{pn}) has dense set of universal
elements. Since the set of universal elements for any family of maps
taking values in a second countable topological space is a
$G_\delta$-set \cite{ge1}, $\uu({\cal F})$ is a dense
$G_\delta$-subset of the Baire space $Y$. Hence $\uu({\cal F})$ is
not contained in $(I-S)(Y)$. Indeed, assume that $\uu({\cal
F})\subseteq (I-S)(Y)$. Since $(I-S)(Y)\neq Y$, there is $w\in Y$
such that $w\notin (I-S)(Y)$. Since $(I-S)(Y)$ is a linear subspace
of $Y$, $(I-S)(Y)\cap (w+(I-S)(Y))=\varnothing$. Hence $\uu({\cal
F})\cap (w+\uu({\cal F}))=\varnothing$. Thus $\uu({\cal F})$ and
$w+\uu({\cal F})$ are disjoint dense $G_\delta$-subsets in $Y$. The
existence of such subsets is impossible since $Y$ is Baire. Thus, we
can choose $u\in \uu({\cal F})\setminus (I-S)(Y)$. According to
Corollary~\ref{supsup5}, $S_u$ is supercyclic and not similar to
$S_0=S\oplus I_\K$. \end{proof}

To illustrate the above corollary, we consider the weighted backward
shifts on $\ell_2$. If $\{w_n\}_{n\in\N}$ is a bounded sequence of
positive numbers, then the operator $T_w:\ell_2\to\ell_2$ acting on
the canonical basis as $T_we_0=0$, $T_we_n=w_ne_{n-1}$ for $n>0$ is
called an {\it backward weighted shift}. Clearly $T_w$ is compact if
and only if $w_n\to 0$ as $n\to\infty$. According to Salas
\cite{salas}, any operator $S=I+T_w$ is hypercyclic. On the other
hand, $I-S=-T_w$ is not onto whenever the sequence  $w$ is not
bounded from below by a positive constant. In particular, for any
compact backward weighted shift $T$, $S=I+T$ is hypercyclic and
$I-S$ is not onto. Thus, by Corollary~\ref{supsup6}, we can choose
$u\in \ell_2$ such that $S_u$ is supercyclic and not similar to
$S\oplus I_\K$. Thus, the statement (L) is indeed false.

From Lemmas~\ref{supsup1} and~\ref{supsup3} it follows that
supercyclicity of $S_u$ implies hypercyclicity of $S$. Moreover,
supercyclicity of $S_0$ is equivalent to hypercyclicity of $S$. It
is natural therefore to consider the question whether hypercyclicity
of $S$ implies supercyclicity of $S_u$ for any vector $u$. The
following example provides a negative answer to this question even
in the friendly situation when $Y$ is a Hilbert space.

\begin{proposition} There exists a hypercyclic operator $S\in
L(\ell_2)$ and $u\in\ell_2$ such that the operator $S_u\in
L(\ell_2\times \K)$ defined by $(\ref{su})$ is not supercyclic.
\end{proposition}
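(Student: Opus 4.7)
The plan is to invoke Corollary~\ref{supsup2}, which reduces non-supercyclicity of $S_u$ to the assertion that the affine set $u+(I-S)(\ell_2)$ contains no universal vector for $\F=\{p_n(S):n\in\N\}$. Combined with Lemma~\ref{supsup3} (which says $\uu(\F)\subseteq \uu(S)$), it suffices to exhibit a hypercyclic $S\in L(\ell_2)$ and a $u\in\ell_2$ for which, for every $v\in\ell_2$, the sequence
$$p_n(S)(u+(I-S)v)=p_n(S)u+(S^n-I)v,\qquad n\in\N,$$
fails to be dense in $\ell_2$.

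I would attempt a Salas-type construction: take $S=I+T$ where $T$ is a backward weighted shift on $\ell_2$ with positive weights $\{w_n\}$, with $w_n\to 0$ so that $T$ is compact. Then $S$ is hypercyclic by Salas's theorem invoked in the paragraph preceding the statement, and $T(\ell_2)=(I-S)(\ell_2)$ is a proper dense subspace. The natural candidate for $u$ is $e_0$: since $Te_0=0$, the formula $p_n(S)=\sum_{j=0}^{n-1}\binom{n}{j+1}T^j$ gives $p_n(S)u=ne_0$, and the orbit under $\F$ of a typical element $w=e_0+Tv$ of the coset becomes $\{ne_0+S^nv-v\}_n$. The hope is that, with the right choice of weights, the linearly growing term $ne_0$ cannot be absorbed by $S^nv-v$ for any $v$, so the orbit escapes to infinity in a direction that is too restricted to be dense.

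The main obstacle is exhibiting a continuous obstruction that rules out density for \emph{every} $v\in\ell_2$. Testing with the coordinate functional $\phi(\cdot)=\langle\cdot,e_0\rangle$ yields
$$\phi(p_n(S)w)=n+\langle v,(I+T^*)^ne_0\rangle-v_0=n+\sum_{k\geq 1}v_k\binom{n}{k}\prod_{j=1}^k w_j,$$
and the $k=1$ term $nw_1v_1$ allows one to cancel the leading $+n$ by choosing $v_1=-1/w_1$. For finite-support $v$ the only such cancellation corresponds to $v=-w_1^{-1}e_1$, which produces $w=e_0+Tv=0$ and is therefore harmless; so the delicate point is to tune $\{w_n\}$ (likely with super-factorial decay) so that no genuine $v\in\ell_2$ can simultaneously cancel the $+n$ term in $\phi(p_n(S)w)$ to every order required to recover density. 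I would expect that the correct obstruction uses not a single functional but a finite block of coordinate evaluations $\langle\cdot,e_0\rangle,\dots,\langle\cdot,e_N\rangle$ simultaneously, forcing the image of the orbit in $\K^{N+1}$ to lie in a nowhere dense algebraic set. Verifying such an explicit weight sequence, and checking that the resulting multi-dimensional growth/direction estimate traps every orbit in the coset in a nowhere dense set of $\ell_2$, is the crux of the construction.
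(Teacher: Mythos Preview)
Your reduction via Corollary~\ref{supsup2} and Lemma~\ref{supsup3}, and the choice $S=I+T$ with $T$ a compact backward weighted shift, match the paper. But the candidate $u=e_0$ is fatal: since $T(w_1^{-1}e_1)=e_0$, you have $e_0\in T(\ell_2)=(I-S)(\ell_2)$, so by Lemma~\ref{supsup4} the operator $S_{e_0}$ is similar to $S_0=S\oplus I_\K$, which (as observed just before the Proposition) is supercyclic precisely because $S$ is hypercyclic. Equivalently, the coset $e_0+(I-S)(\ell_2)$ equals $(I-S)(\ell_2)$ itself, and Lemma~\ref{supsup3} guarantees it contains $(I-S)(\uu(S))\subseteq\uu(\F)$. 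No tuning of the weights can rescue $u=e_0$; you must take $u\notin T(\ell_2)$, which forces $u$ to have infinite support decaying slowly relative to the weights.

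The paper takes $w_n=e^{-2n}$ and $u_n=(n+1)^{-1}$, and then exploits the inclusion $\uu(\F)\subseteq\uu(S)$ more fully than you do: instead of analysing the $\F$-orbit of $u+Tx$, it suffices to show that $u+Tx$ is not even hypercyclic for $S$, and this follows from the norm blow-up $\|S^n(u+Tx)\|\to\infty$ for every $x\in\ell_2$. Expanding $(I+T)^n$ binomially gives $\|S^nu\|\geq|(S^nu)_0|=A_n=\sum_k\binom{n}{k}(k+1)^{-1}e^{-k(k+1)}\to\infty$, while the extra weight carried by each coordinate of $Tx$ yields $\|S^nTx\|\leq 2cB_n$ with $B_n=\sum_k\binom{n}{k}e^{-(k+1)(k+2)}=o(A_n)$. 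The obstruction is thus a single norm comparison, not a multi-functional trap.
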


\begin{proof} Consider the backward weighted shift $T\in L(\ell_2)$
with the weight sequence $w_n=e^{-2n}$, $n\in\N$ and let $S=I+T$.
Let also $u\in \ell_2$, $u_n=(n+1)^{-1}$ for $n\in\Z_+$. According
the above cited theorem of Salas, $S$ is hypercyclic. It remains to
demonstrate that $S_u$ is not supercyclic.

Assume that $S_u$ is supercyclic. By Corollary~\ref{supsup2}, the
set $u+T(\ell_2)$ contains a universal vector for the family ${\cal
F}=\{p_n(S):n\in \N\}$ with $p_n$ being the polynomials defined in
(\ref{pn}). By Lemma~\ref{supsup3}, $\uu({\cal F})\subseteq\uu(S)$
and therefore $(u+T(\ell_2))\cap \uu(S)\neq\varnothing$. That is,
there exists $x\in\ell_2$ such that $u+Tx$ is a hypercyclic vector
for $S$. We are going to obtain a contradiction by showing that
$\|S^n(u+Tx)\|\to\infty$ as $n\to\infty$.

Taking into account that $S^n=(I+T)^n=\sum\limits_{k=0}^n \bin nk
T^k$, we see that for each $y\in\ell_2$,
\begin{equation}\label{tn}
(S^ny)_j=\sum_{k=0}^n \bin nk y_{j+k}e^{-k(k+1)}\ \ \text{for any
$j,n\in\Z_+$}.
\end{equation}
Applying this formula to $y=u$, we obtain
\begin{equation}\label{tn1}
\|S^nu\|\geq |(S^nu)_0|=A_n,\ \ \text{where}\ \ A_n= \sum_{k=0}^n
\bin nk (k+1)^{-1}e^{-k(k+1)}\ \ \text{for any $n\in\Z_+$}.
\end{equation}
Now since $x\in\ell_2$, $c=\sup\limits_{j\in\Z_+}|x_j|<\infty$.
Hence $|(Tx)_j|\leq ce^{-2j-2}$. Substituting these inequalities
into (\ref{tn}), we see that
\begin{align}\label{tn2}
&|(S^nTx)_j|\leq c \sum_{k=0}^n \bin nk e^{-2j-2k-2}e^{-k(k+1)}=
ce^{-2j} B_n\ \text{for any $n,j\in\Z_+$},
\\
&\quad\text{where}\ \ B_n= \sum_{k=0}^n \bin nk e^{-(k+1)(k+2)}.
\notag
\end{align}
Summing up the inequalities in (\ref{tn2}), we obtain
\begin{equation}\label{tn3}
\|S^nTx\|\leq \sum_{j=0}^\infty |(S^nTx)_j|\leq
cB_n\sum_{j=0}^\infty e^{-2j}<2cB_n\ \ \text{for any $n\in\Z_+$.}
\end{equation}

In order to demonstrate that $\|S^n(u+Tx)\|\to\infty$ as
$n\to\infty$, it is enough to show that $A_n\to \infty$ and
$B_n=o(A_n)$ as $n\to\infty$. Indeed, then from (\ref{tn1}) and
(\ref{tn3}) it immediately follows that $\|S^n(u+Tx)\|\to\infty$.
Applying the Stirling formula to estimate $\bin nk
(k+1)^{-1}e^{-k(k+1)}$, we see that there exist positive constants
$\alpha$ and $\beta$ such that whenever $1\leq k\leq n^{1/2}$,
\begin{equation}\label{676}
\alpha k^{-3/2}(nk^{-1}e^{-k})^k\leq \bin nk
(k+1)^{-1}e^{-k(k+1)}\leq \beta k^{-3/2}(nk^{-1}e^{-k})^k.
\end{equation}
From (\ref{676}) it follows that if $\{k_n\}$ is a sequence of
positive integers such that $2k_n-\ln n=O(1)$, then for any $a<1/4$,
$$
A_n\geq \bin n{k_n} (k_n+1)^{-1}e^{-k_n(k_n+1)}\geq e^{a(\ln n)^2}\
\ \text{for all sufficiently large $n$.}
$$
Hence $A_n\to\infty$. Using (\ref{676}), we immediately see that
$$
A'_n= \sum_{0\leq k<(\ln n)/4} \bin nk
(k+1)^{-1}e^{-k(k+1)}=o(e^{b(\ln n)^2}) \ \ \text{for any $b>3/16$.}
$$
From the last two displays we have $A'_n=o(A_n)$. Then
$$
B'_n=\sum_{0\leq k<(\ln n)/4} \bin nk e^{-(k+1)(k+2)}\leq
A_n'=o(A_n).
$$
On the other hand, for $(\ln n)/4\leq k\leq n$, we have
$(k+1)e^{-2k-2}<4n^{-2}\ln n$ and therefore
$$
\bin nk e^{-(k+1)(k+2)}\leq  \bin nk (k+1)^{-1}e^{-k(k+1)}4n^{-2}\ln
n.
$$
Hence
$$
(B_n-B'_n)\leq 4n^{-1}\ln n (A_n-A'_n)\leq 4n^{-2}\ln n A_n=o(A_n).
$$
Thus, $B_n=B_n'+(B_n-B'_n)=o(A_n)$, which completes the proof.
\end{proof}

\section{Appendix: Proof of Lemmas~\ref{conn1}--\ref{conn4}} \small

Recall that a subset $U$ of a topological vector space $X$ is called
{\it balanced} if $tx\in U$ for any $x\in U$ and $t\in \K$ with
$|t|\leq 1$. It is well-known \cite{shifer} that any topological
vector space has a base of neighborhoods of zero consisting of
balanced sets.

\begin{proof}[Proof of Lemma~\ref{conn1}]
Let $x,y\in X$. Then $f_{x,y}:[0,1]\to X$, $f_{x,y}(t)=(1-t)x+ty$ is
continuous, $f_{x,y}(0)=x$ and $f_{x,y}(1)=y$. If ${\cal W}$ is a
base of neighborhoods of zero in $X$ consisting of balanced sets,
then $\{x+W:x\in X,\ W\in{\cal W}\}$ is a base of topology of $X$
consisting of path connected sets. Indeed, for any $x\in X$,
$W\in{\cal W}$ and $w\in W$ the continuous path $f_{x,x+w}$ connects
$x$ and $x+w$ and never leaves $x+W$. Hence $X$ has a base of
topology consisting of path connected sets and therefore $X$ is
locally path connected. Finally, let $f:\T\to X$ be continuous. Then
$h:[0,1]\times \T\to X$, $h(t,s)=tf(s)$ is a contraction of $f$.
Thus, $X$ is simply connected.
\end{proof}

We shall use the following well-known properties of connectedness.

\begin{itemize}
\item[(c1)] A path connected topological space $X$ is simply connected
if and only if the space $C(\T,X)$ of continuous maps from $\T$ to
$X$ with the compact-open topology is path connected;
\item[(c2)] If $X$ is a locally connected (has a base of topology consisting
of connected sets), then any connected component of $X$ is closed
and open.
\end{itemize}

\begin{lemma}\label{dense1} Let $X$ be a topological vector space
and $A$ be the subset of $C(\T,X\setminus \{0\})$ consisting of
continuous functions $f:\T\to X\setminus\{0\}$ with finite
dimensional $\spann(f(\T))$. Then $A$ is dense in $C(\T,X\setminus
\{0\})$.
\end{lemma}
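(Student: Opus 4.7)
The plan is to show that any $f\in C(\T,X\setminus\{0\})$ can be approximated in the compact-open topology (which, since $\T$ is compact, is the topology of uniform convergence coming from the canonical uniformity on the topological vector space $X$) by a piecewise linear interpolation through finitely many values $f(z_j)$. Such an interpolation automatically has image contained in the finite-dimensional span of those values, so it lies in $A$; the only real issue is to keep the interpolation away from $0$.

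First I would fix an arbitrary balanced neighborhood $W$ of $0$ in $X$ and show we can find $g\in A$ with $g(z)-f(z)\in W$ for all $z\in\T$. Since $f(\T)$ is compact and $X\setminus\{0\}$ is open, a compactness argument (using the existence of a base at $0$ consisting of balanced sets) yields a balanced neighborhood $W_0$ of $0$ with $W_0\subseteq W$ and such that $f(z)+W_0\subseteq X\setminus\{0\}$ for every $z\in\T$. Next, pick a balanced neighborhood $V$ of $0$ with $V+V\subseteq W_0$.

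Now I would invoke uniform continuity of $f$ (equivalently, continuity together with compactness of $\T$) to choose a partition $z_0,z_1,\dots,z_n=z_0$ of $\T$ into consecutive arcs $I_1,\dots,I_n$ so short that $f(z)-f(z_{j-1})\in V$ whenever $z\in I_j$. Parametrize each $I_j$ linearly by $t\in[0,1]$ with $t=0$ at $z_{j-1}$ and $t=1$ at $z_j$, and define
\[
g(z)=(1-t)f(z_{j-1})+tf(z_j)=f(z_{j-1})+t\bigl(f(z_j)-f(z_{j-1})\bigr)\quad\text{for }z\in I_j.
\]
Then $g\colon\T\to X$ is continuous and $\spann g(\T)\subseteq\spann\{f(z_0),\dots,f(z_{n-1})\}$ is finite dimensional.

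It remains to verify the two required properties. For $z\in I_j$ the vector $f(z_j)-f(z_{j-1})$ lies in $V$ and hence, since $V$ is balanced and $t\in[0,1]$, so does $t(f(z_j)-f(z_{j-1}))$; therefore $g(z)\in f(z_{j-1})+V\subseteq f(z_{j-1})+W_0\subseteq X\setminus\{0\}$, confirming $g\in C(\T,X\setminus\{0\})\cap A$. Moreover, $g(z)-f(z)=\bigl(f(z_{j-1})-f(z)\bigr)+t\bigl(f(z_j)-f(z_{j-1})\bigr)\in V+V\subseteq W_0\subseteq W$, giving the required uniform closeness. The only delicate point — the potential obstacle — is the simultaneous choice of $W_0$ balanced with $f(\T)+W_0\subseteq X\setminus\{0\}$; this is handled by the standard compactness-plus-balanced-base argument for topological vector spaces sketched above.
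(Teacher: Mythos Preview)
Your proof is correct and follows essentially the same approach as the paper: both approximate $f$ by the piecewise linear interpolation through finitely many values $f(z_j)$, whose image lies in a finite-dimensional subspace. The paper packages this as a sequence $f_n$ over equal subdivisions and simply asserts $f_n\to f$ in $C(\T,X)$ with $f_n$ avoiding $0$ for large $n$; your version is more explicit about choosing the mesh relative to a given balanced neighborhood and verifying the interpolation stays in $X\setminus\{0\}$, but the underlying idea is identical.
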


\begin{proof} Let $f\in C(\T,X\setminus \{0\})$. For any $n\in\N$
consider the function $f_n:\T\to X$ such that $f_n(e^{2\pi
it})=(k+1-nt)f(e^{2\pi ik/n})+(nt-k)f(e^{2\pi i(k+1)/n})$ for
$t\in[k/n,k+1/n]$, $0\leq k\leq n$. It is straightforward to see
that $f_n$ are well defined, continuous and the sequence $f_n$ is
convergent to $f$ in $C(\T,X)$. Taking into account that $f_n$ does
not take value zero for sufficiently large $n$ and $\spann(f_n(\T))$
is at most $n$-dimensional, we obtain density of $A$ in
$C(\T,X\setminus \{0\})$.
\end{proof}

\begin{proof}[Proof of Lemma~\ref{conn2}]
Let ${\cal W}$ be a base of neighborhoods of zero in $X$ consisting
of balanced sets. Consider the family ${\cal B}=\{x+W:x\in X,\
W\in{\cal W},\ 0\notin x+W\}$. Clearly $\cal B$ is a base of
topology of $X\setminus\{0\}$. As in the proof of Lemma~\ref{conn1},
we see that $\cal B$ consists of path connected sets and therefore
$X\setminus\{0\}$ is locally path connected. Next, for any $x,u\in
X\setminus\{0\}$, we can pick $y\in X$ such that $0\notin
[x,y]\cup[y,u]$.  Then $g_{x,y,u}:[0,1]\to X\setminus\{0\}$,
$$
g_{x,y,u}(t)=\left\{\begin{array}{ll}(1-2t)x+2ty&\text{if}\ \ 0\leq
t\leq 1/2,\\ (2-2t)y+(2t-1)u&\text{if}\ \ 1/2<t\leq
1\end{array}\right.
$$
is continuous, $g(0)=x$ and $g(1)=u$. Thus, $X\setminus\{0\}$ is
path connected.

Next, let $f\in C(\T,X\setminus\{0\})$. Pick $W\in{\cal W}$ such
that $0\notin f(\T)+W$. Then the set $\Omega_{f,W}=\{g\in
C(\T,X\setminus\{0\}):(g-f)(\T)\subseteq W\}$ is a neighborhood of
$f$ in $C(\T,X\setminus\{0\})$. Moreover, $\Omega_{f,W}$ is path
connected. Indeed, for any $g\in \Omega_{f,W}$, the continuous path
$F:[0,1]\to C(\T,x+W)$, $F(t)(s)=(1-t)g(s)+tf(s)$ connects $g$ and
$f$ and never leaves $\Omega_{f,W}$. Since the family
$\{\Omega_{f,W}:0\notin f(\T)+W\}$ is a base of topology of
$C(\T,X\setminus\{0\})$, we see that $C(\T,X\setminus\{0\})$ is
locally path connected. Assume that $X\setminus\{0\}$ is not simply
connected. According to (c1), $C(\T,X\setminus\{0\})$ is not path
connected. Thus, there is $f\in C(\T,X\setminus\{0\})$ that can not
be connected with a constant map by a continuous path. By (c2) the
connected component $\Omega$ of $f$ in $C(\T,X\setminus\{0\})$ is
open. According to Lemma~\ref{dense1}, the family $A$ of $g\in
C(\T,X\setminus\{0\})$ with finite dimensional $\spann(g(\T))$ is
dense in $C(\T,X\setminus\{0\})$. Hence we can pick $g\in \Omega$
and an $\R$-linear subspace $L$ of $X$ such that $3\leq \dim
L<\infty$ and $g(\T)\subseteq L$. On the other hand, it is
well-known \cite{at} that $\R^n\setminus\{0\}$ is simply connected
for $n\geq 3$. Thus, we can connect $g$ with a constant map by a
continuous path $x_0$ lying within $C(\T,L\setminus\{0\})\subseteq
C(\T,X\setminus\{0\})$. We have obtained a contradiction.
\end{proof}

\begin{lemma} \label{exten1} Let $X$ be a topological vector space
and $f:[0,1]\to \PP X$ be continuous. Then there exists continuous
$g:[0,1]\to X\setminus\{0\}$ such that $f=\pi\circ g$. Similarly,
for any continuous $f_0:[0,1]\to \PP_+ X$, there exists continuous
$g_0:[0,1]\to X\setminus\{0\}$ such that $f_0=\pi_+\circ g_0$.
\end{lemma}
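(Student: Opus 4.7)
The plan is to prove the lifting statement for $\pi:X\setminus\{0\}\to\PP X$ by the standard local-to-global path-lifting argument; the corresponding statement for $\pi_+$ will follow by the same scheme with positive scalars in place of nonzero scalars.

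First I will establish local liftability: for each $t_0\in[0,1]$ and each $x_0\in f(t_0)\setminus\{0\}$ there will be a relatively open interval $J\ni t_0$ in $[0,1]$ and a continuous $h:J\to X\setminus\{0\}$ with $h(t_0)=x_0$ and $\pi h=f|_J$. To produce $h$ I will pick a balanced open neighborhood $W$ of $0$ with $x_0\notin W$, so that $x_0+W$ is an open subset of $X\setminus\{0\}$; continuity of $f$ together with openness of $\pi$ then supplies $J$ so small that $f(J)\subseteq\pi(x_0+W)$, and in particular $f(t)\cap(x_0+W)\neq\varnothing$ for every $t\in J$. A continuous selection $t\mapsto h(t)\in f(t)\cap(x_0+W)$ is then obtained by further shrinking $W$ and $J$ and using the elementary fact that for any fixed $y\neq 0$ the map $\K\to X$, $s\mapsto sy$, is a homeomorphism onto $\langle y\rangle$; this lets one single out, on each line $f(t)$ close to $\langle x_0\rangle$, the representative lying on the correct connected arc near $x_0$ and depending continuously on $t$.

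Once local liftability is in hand, I will invoke compactness of $[0,1]$ to obtain a finite partition $0=s_0<s_1<\cdots<s_n=1$ together with continuous local lifts $h_k:[s_{k-1},s_k]\to X\setminus\{0\}$ of $f$ on each subinterval. At each joint $s_k$ the two values $h_k(s_k)$ and $h_{k+1}(s_k)$ lie on the same line $f(s_k)$, so they differ by a unique nonzero scalar $\mu_k$. Replacing $h_{k+1}$ by $\mu_k h_{k+1}$ (and iterating over $k$) produces lifts that agree at every joint, and their concatenation is the desired continuous $g:[0,1]\to X\setminus\{0\}$ with $\pi g=f$. The analogous scheme applied to $\pi_+$, with the joint-scalars $\mu_k$ now ranging over the positive reals, delivers the second half of the lemma; the connectedness of the positive reals in fact makes the gluing slightly easier there.

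The main obstacle is the local lifting step. In a locally convex TVS whose dual separates points, one would just fix $\varphi\in X^*$ with $\varphi(x_0)=1$ and take the section $L\mapsto y/\varphi(y)$ on $\{L\in\PP X:L\not\subseteq\ker\varphi\}$; but the paper emphasizes that in a general topological vector space the dual may fail to separate points, so such a $\varphi$ need not exist. The substitute is the geometric selection via the balanced set $W$, and the delicate technical point of the proof is verifying that after sufficient shrinking this selection is single-valued and continuous in $t$. Once this is arranged, the rest of the argument is a routine compactness-and-glue manoeuvre.
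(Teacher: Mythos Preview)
Your compactness-and-glue step is fine: once you have continuous local lifts on subintervals, rescaling by the unique nonzero (respectively positive) scalar at each joint and concatenating is routine and correct.

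The gap is in the local lifting step, and you have not closed it. You correctly observe that in a general topological vector space one cannot normalize by a continuous linear functional, and you propose instead a ``geometric selection via the balanced set $W$''. But you never say what this selection \emph{is}. Note that $\pi\bigr|_{x_0+W}$ is \emph{never} injective: for every balanced neighborhood $W$ of $0$ with $x_0\notin W$, the fiber of $\pi$ over $\langle x_0\rangle$ inside $x_0+W$ is an entire open arc $\{\lambda x_0:\lambda\in D\}$ for some open $D\ni 1$ in $\K^*$, and the same holds over nearby lines. So you cannot simply invert $\pi$ on $x_0+W$. Your sentence about ``singling out the representative lying on the correct connected arc near $x_0$ and depending continuously on $t$'' is precisely the assertion that a continuous section exists, which is what has to be \emph{proved}; the fact that $s\mapsto sy$ is a homeomorphism onto $\langle y\rangle$ tells you each fiber is nice, but gives no mechanism for selecting a point from nearby fibers in a way that varies continuously with $t$. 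This is exactly the difficulty you flagged, and it remains unfilled.

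The paper does not attempt to build local sections of $\pi$ by hand. Instead it invokes Michael's continuous selection theorem: one checks that $f([0,1])$ is compact metrizable, that $\pi^{-1}(f([0,1]))$ is complete metrizable, and that the multivalued map $t\mapsto f(t)$ (values being lines, respectively rays) satisfies the hypotheses of Theorem~1.2 of \cite{selec}. Michael's theorem then produces, for each $t_0$ and each prescribed value $x_0\in f(t_0)$, a global continuous selection $h_{t_0,x_0}:[0,1]\to X$ through $x_0$; a finite positive-combination of such selections (a partition-of-unity manoeuvre) then yields a selection that avoids $0$. So the paper trades your unproved local-section claim for an appeal to a substantial selection theorem; your scheme would be more elementary if the local step could be completed, but as written it is incomplete.
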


\begin{proof} Taking into account that $f([0,1])$ and $f_0([0,1])$
are metrizable and compact, we see that $\pi^{-1}(f([0,1]))$ and
$\pi_+^{-1}(f_0([0,1]))$ are complete metrizable subsets of $X$.
Consider multivalued functions $\widetilde f:[0,1]\to 2^X$ and
$\widetilde f_0:[0,1]\to 2^X$, $\widetilde f(t)$ being $f(t)$
considered as a subset (one-dimensional subspace) of $X$ and
$\widetilde f_0(t)$ being $f_0(t)$ considered as a subset (ray) of
$X$. It is easy to see that these multivalued maps satisfy all the
conditions of Theorem~1.2 \cite{selec} by Michael and therefore for
any $t_0\in [0,1]$ and any $x_0\in \widetilde f(t_0)$ (respectively,
$x_0\in \widetilde f_0(t_0)$) there exists a continuous map
$h_{t_0,x_0}:[0,1]\to X$ such that $h_{t_0,x_0}(t_0)=x_0$ and
$h_{t_0,x_0}(t)\in \widetilde f(t)$ (respectively,
$h_{t_0,x_0}(t)\in \widetilde f_0(t)$) for any $t\in [0,1]$. Now it
is a routine exercise to show that required functions $g$ and $g_0$
can be obtained as finite sums of the functions of the shape
$\phi\cdot h_{t_0,x_0}$ with continuous $\phi:[0,1]\to (0,\infty)$.
\end{proof}

\begin{lemma} \label{exten2} Let $X$ be a complex topological vector
space. Then for any continuous $f:\T\to \PP X$, there exists
continuous $g:\T\to X\setminus\{0\}$ such that $f=\pi\circ g$.
\end{lemma}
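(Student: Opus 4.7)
The plan is to lift $f$ first along the parametrization $\eta:[0,1]\to\T$, $\eta(t)=e^{2\pi it}$, using the interval version Lemma~\ref{exten1}, and then to repair the inevitable mismatch at the endpoints by rescaling with a continuous nonvanishing scalar function. The whole argument will hinge on the path-connectedness of $\C\setminus\{0\}$, which is precisely the point at which the complex structure of $X$ enters (and which fails in the real case, consistently with the remark after Lemma~\ref{conn4}).

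First I would apply Lemma~\ref{exten1} to the continuous composition $f\circ\eta:[0,1]\to\PP X$ to obtain a continuous lift $h:[0,1]\to X\setminus\{0\}$ with $\pi\circ h=f\circ\eta$. Since $\eta(0)=\eta(1)$, the vectors $h(0)$ and $h(1)$ lie in the same one-dimensional subspace $f(1)\subset X$, so there is a unique $\lambda\in\C\setminus\{0\}$ with $h(1)=\lambda h(0)$. This $\lambda$ is the monodromy we need to kill.

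Next I would choose any continuous path $\phi:[0,1]\to\C\setminus\{0\}$ with $\phi(0)=1$ and $\phi(1)=\lambda^{-1}$; such a $\phi$ exists because $\C\setminus\{0\}$ is path connected. Setting $\tilde g(t)=\phi(t)h(t)$ gives a continuous map $[0,1]\to X\setminus\{0\}$ which still satisfies $\pi\circ\tilde g=f\circ\eta$ (multiplication by the nonzero scalar $\phi(t)$ preserves the line spanned by $h(t)$), and now $\tilde g(0)=h(0)=\lambda^{-1}\lambda h(0)=\tilde g(1)$. By the universal property of the quotient map $\eta:[0,1]\to\T$, $\tilde g$ descends to a continuous map $g:\T\to X\setminus\{0\}$ satisfying $g\circ\eta=\tilde g$, and therefore $\pi\circ g=f$.

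There is no real obstacle beyond identifying the role of path-connectedness of the fiber $\C\setminus\{0\}$ of the projection $\pi:X\setminus\{0\}\to\PP X$. In the real case, the fiber $\R\setminus\{0\}$ has two components, the monodromy $\lambda$ may land in the negative component, and no rescaling $\phi$ can bridge the gap; this matches the fact that even $\R\PP^1$ is not simply connected, so a lifting statement of this strength cannot be expected without the complex hypothesis.
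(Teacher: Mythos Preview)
Your proof is correct and follows essentially the same approach as the paper's: lift via Lemma~\ref{exten1} along $[0,1]$, identify the monodromy scalar in $\C\setminus\{0\}$, and kill it by multiplying by a continuous nonvanishing scalar function. The only difference is cosmetic: the paper writes down an explicit scalar path $(1+(r-1)t)e^{ist}$ (where $z=re^{is}$ satisfies $h(1)=z^{-1}h(0)$), whereas you invoke path-connectedness of $\C\setminus\{0\}$ abstractly; your remark about where the complex hypothesis enters is exactly the point.
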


\begin{proof} By Lemma~\ref{exten1}, we can find continuous $h:[0,1]\to
X\setminus\{0\}$ such that $f(e^{2\pi it})=\pi(h(t))$ for any $t\in
[0,1]$. Since $h(0)$ and $h(1)$ are both non-zero elements of the
one-dimensional space $f(1)$, there is $z\in\C\setminus \{0\}$ such
that $h(1)=z^{-1}h(0)$. Let $r>0$ and $s\in\R$ be such that
$z=re^{is}$. Now it is easy to see that the function $g:\T\to
X\setminus\{0\}$ defined by the formula $g(e^{2\pi i
t})=(1+(r-1)t)e^{ist}h(t)$ for $0\leq t<1$ satisfies the required
conditions.
\end{proof}

\begin{lemma} \label{exten3} Let $X$ be a topological vector
space. Then for any continuous $f:\T\to \PP_+ X$, there exists
continuous $g:\T\to X\setminus\{0\}$ such that $f=\pi_+\circ g$.
\end{lemma}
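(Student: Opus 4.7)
My plan is to mimic the structure of Lemma~\ref{exten2}, but exploiting the fact that the stabilizer of a ray under scalar action is $\R_+$ rather than $\C\setminus\{0\}$; this should make the argument strictly simpler than in the complex projective case, since one only has to interpolate a positive number and no argument/rotation correction is required.

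First I would apply Lemma~\ref{exten1} (the $\PP_+$ version) to the restriction of $f$ to the parametrization $[0,1]\to\T$, $t\mapsto e^{2\pi it}$, obtaining a continuous $h:[0,1]\to X\setminus\{0\}$ such that $\pi_+(h(t))=f(e^{2\pi it})$ for all $t\in[0,1]$. The obstruction to pushing $h$ down to a continuous function on $\T$ is that the endpoints $h(0)$ and $h(1)$ need not coincide; however, both lie on the single ray $f(1)\in\PP_+ X$, so there exists $r>0$ with $h(1)=r\,h(0)$.

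Next I would kill this discrepancy by rescaling along the path. Define
\[
g(e^{2\pi it}) = \bigl(1+(r^{-1}-1)t\bigr)\,h(t),\qquad t\in[0,1].
\]
The scalar coefficient is a continuous positive-valued function of $t$ on $[0,1]$ (it equals $1$ at $t=0$ and $r^{-1}$ at $t=1$, and stays between $\min(1,r^{-1})$ and $\max(1,r^{-1})$), so $g$ takes values in $X\setminus\{0\}$ and lies in the correct ray at every point, i.e.\ $\pi_+\circ g=f$. Evaluating at the endpoints gives $g(1^-)=r^{-1}h(1)=h(0)=g(1^+)$, so the formula descends to a well-defined continuous map on $\T$ (since $t\mapsto e^{2\pi it}$ is a quotient map identifying $0$ and $1$).

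There is no serious obstacle here: the only subtle point is verifying that the interpolating positive scalar never vanishes, which is immediate from positivity of $r$, and confirming the endpoint matching condition, which is arranged by the choice of the affine interpolation between $1$ and $r^{-1}$. In contrast to Lemma~\ref{exten2}, no exponential factor $e^{ist}$ is needed, since rays are a real (not complex) quotient. Note also that the argument uses no topological hypothesis on $X$ beyond being a topological vector space, so the lemma holds in full generality.
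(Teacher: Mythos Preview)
Your proof is correct and essentially identical to the paper's: both lift $f$ via Lemma~\ref{exten1} to a path $h:[0,1]\to X\setminus\{0\}$, observe that $h(0)$ and $h(1)$ differ by a positive scalar, and then multiply $h$ by an affine positive interpolant to match the endpoints. The only difference is notational (you write $h(1)=r\,h(0)$ and rescale by $1+(r^{-1}-1)t$, while the paper writes $h(1)=r^{-1}h(0)$ and rescales by $1+(r-1)t$), which amounts to the same formula.
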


\begin{proof} By Lemma~\ref{exten1}, there is a continuous $h:[0,1]\to
X\setminus\{0\}$ such that $f(e^{2\pi it})=\pi(h(t))$ for any $t\in
[0,1]$. Since $h(0)$ and $h(1)$ are both non-zero elements of the
ray $f(1)$, there is $r>0$ such that $h(1)=r^{-1}h(0)$. Clearly, the
function $g:\T\to X\setminus\{0\}$ defined as $g(e^{2\pi i
t})=(1+(r-1)t)h(t)$ for $0\leq t<1$ satisfies the required
conditions.
\end{proof}

\begin{proof}[Proof of Lemma~\ref{conn3}]
For every linearly independent $x,y\in X$, we consider the path
$h_{x,y}:[0,1]\to\PP X$, $h_{x,y}(t)=\langle (1-t)x+ty\rangle$.
Clearly each $h_{x,y}$ is continuous, $h_{x,y}(0)=\langle x\rangle$
and $h_{x,y}(1)=\langle y\rangle$. Thus, any two distinct points of
$\PP X$ can be connected by a continuous path and therefore $\PP X$
is path connected. If $W$ is a balanced neighborhood of zero and
$x\in X$ are such that $0\notin x+W$, then for any $y\in x+W$ with
$\langle y\rangle\neq\langle x\rangle$, the path $h_{x,y}$ connects
$\langle x\rangle$ and $\langle y\rangle$ and never leaves
$\pi(x+W)$. Thus, $\pi(x+W)$ is path connected. Since the family of
$\pi(x+W)$ forms a base of topology of $\PP X$, we see that $\PP X$
is locally path connected. Let now $f:\T\to \PP X$ be a continuous
map. If $X$ is finite dimensional, then $\PP X$ is homeomorphic to
$\PP \C^n$ for some $n\in \N$. The latter spaces are known to be
simply connected \cite{at}. If $X$ is infinite dimensional, we use
Lemma~\ref{exten2} to find  a continuous  map $g:\T\to X\setminus
\{0\}$ such that $f=\pi\circ g$. According to Lemma~\ref{conn2},
there is continuous $\phi:[0,1]\times \T\to X\setminus\{0\}$ and
$x_0\in X\setminus\{0\}$ such that $\phi(0,s)=g(s)$ and
$\phi(1,s)=x_0$ for any $s\in\T$. The map $\pi\circ\phi$ provides
the required contraction of the closed path $f$ in $\PP X$.
\end{proof}

\begin{proof}[Proof of Lemma~\ref{conn4}]
The proof of path connectedness, local path connectedness of $\PP_+
X$ is exactly the same as the above proof for $\PP X$. Simple
connectedness of $\PP_+ X$ in the case of finite dimensional $X$
follows from the fact that then $\PP_+ X$ is homeomorphic to the
$n-1$-dimensional sphere $S^{n-1}$ if the real dimension of $X$ is
$n$. Since $S^k$ is simply connected for $k\geq 2$ \cite{at}, $\PP_+
X$ is simply connected if $X$ has finite real dimension $\geq 3$. If
$X$ is infinite dimensional, the proof of simple connectedness of
$\PP_+ X$ is the same as for $\PP X$ for complex $X$ with the only
difference that we use Lemma~\ref{exten3} instead of
Lemma~\ref{exten2}.
\end{proof}

{\bf Remark.} \ The relative difficulty of some of the above lemmas
is due to the fact that we consider general, not necessarily locally
convex, topological vector spaces. The locally convex case is indeed
elementary because of the guaranteed rich supply of continuous
linear functionals.


\small\rm

\vskip1truecm

\scshape

\noindent Stanislav Shkarin

\noindent Queens's University Belfast

\noindent Department of Pure Mathematics

\noindent University road, Belfast, BT7 1NN, UK

\noindent E-mail address: \qquad {\tt s.shkarin@qub.ac.uk}


\begin{thebibliography}{99}

\itemsep=-2pt

\bibitem{ansa}S.~Ansari, \it Hypercyclic and cyclic vectors, \rm J. Funct.
Anal. \bf128\rm\  (1995), 374--383



\bibitem{bm3}F.~Bayart and E.~Matheron, \it Hyponormal operators, weighted shifts
and weak forms of supercyclicity, \rm Proc. Edinb. Math. Soc. \bf
49\rm\ (2006), 1--15

\bibitem{bermud}T.~Berm\'udez, A.~Bonilla and A.~Peris, \it
$\C$-supercyclic versus $\R_+$-supercyclic operators, \rm Arch.
Math. \bf79\rm\ (2002), 125--130

\bibitem{holo}T.~Bermúdez, A.~Bonilla, J.~Conejero and A.~Peris,
\it Hypercyclic, topologically mixing and chaotic semigroups on
Banach spaces, \rm Studia Math. \bf170\rm\ (2005), 57--75

\bibitem{bp}J.~B\'es and A.~Peris, \it Hereditarily hypercyclic
operators, \rm J. Funct. Anal.  \bf167\rm\  (1999),  94--112

\bibitem{fe2}P.~Bourdon, N.~Feldman and J.~Shapiro, \it Some properties of
$N$-supercyclic operators, \rm Studia Math. \bf 165\rm\ (2004),
135--157

\bibitem{semi}J.~Conejero, V.~M\"uller and A.~Peris, \it
Hypercyclic behaviour of operators in a hypercyclic $C\sb
0$-semigroup, \rm J. Funct. Anal. \bf244\rm\ (2007), 342--348



\bibitem{angel}R.~Engelking, \it General topology, \rm Sigma Series in Pure
Mathematics \bf6\rm,\ Heldermann Verlag, Berlin, 1989

\bibitem{fe1}N.~Feldman, $N$-supercyclic operators, \rm Studia Math. \bf
151\rm\ (2002), 141--159

\bibitem{distal}H.~Furstenberg, \it The structure of distal flows, \rm  Amer. J. Math.
\bf85\rm\ (1963), 477--515


\bibitem{gri}S.~Grivaux, \it Hypercyclic operators, mixing operators
and the bounded steps problem, \rm J. Operator Theory \bf54\rm\
(2005), 147--168

\bibitem{grish}S.~Grivaux and S.~Shkarin, \it Non-mixing
hypercyclic operators \rm\ [preprint]

\bibitem{ge1}K.~Grosse-Erdmann, \it Universal families and
hypercyclic operators, \rm Bull. Amer. Math. Soc. \bf36\rm\ (1999),
345--381

\bibitem{ge2}K.~Grosse-Erdmann, \it Recent developments in
hypercyclicity, \rm RACSAM Rev. R. Acad. Cienc. Exactas Fis. Nat.
Ser. A Mat. \bf 97\rm\  (2003), 273--286

\bibitem{HR}E.~Hewitt and K.~Ross, \it Abstract harmonic analysis
Vol.~I, \rm Springer, Berlin, 1979

\bibitem{leon}F.~Le\'on-Saavedra, \it The positive supercyclicity
theorem, \rm Extracta Math. \bf19\rm\ (2004), 145--149

\bibitem{muller}F.~Le\'on-Saavedra and Vladimir M\"uller, \it
Rotations of hypercyclic and supercyclic operators, \rm Integral
Equations and Operator Theory \bf50\rm\ (2004), 385--391

\bibitem{selec}E.~Michael, \it Continuous selections II, \rm  Ann. of Math.
\bf64\rm\ (1956), 562--580

\bibitem{msa}A.~Montes-Rodr\i guez and H.~Salas, \it Supercyclic
subspaces: spectral theory and weighted shifts, \rm Adv. Math.
\bf163\rm\ (2001),  74--134

\bibitem{msh}A.~Montes-Rodr\i guez and S.~Shkarin, \it
Non-weakly supercyclic operators, \rm J. Operator Theory \bf58\rm\
(2007), 39--62

\bibitem{ou}J.~Oxtoby and S.~Ulam, \it Measure-preserving homeomorphisms and
metrical transitivity, \rm  Ann. of Math. \bf42\rm\ (1941), 874--920

\bibitem{salas}H.~Salas, \it Hypercyclic weighted shifts, \rm Trans. Amer. Math.
Soc. \bf347\rm\ (1995), 993--1004

\bibitem{shifer}H.~Sch\"afer, \it Topological vector spaces, \rm
Springer, New York, 1971



\bibitem{at}E.~Spanier, \it Algebraic topology, \rm
Springer, New York, 1981

\bibitem{ww}J.~Wengenroth, \it Hypercyclic operators on non-locally
convex spaces, \rm Proc. Amer. Math. Soc. \bf131\rm\ (2003),
1759--1761

\end{thebibliography}
\end{document}